\numberwithin{equation}{section} \numberwithin{theorem}{section}
\begin{document}
%  \linenumbers
%  \pagewiselinenumbers
  %\runningpagewiselinenumbers

%\begin{frontmatter}

\title{The resolvent kernel for PCF self-similar fractals}
%\author[Ionescu, Pearse, Rogers, Ruan, Strichartz]
%  {Marius Ionescu, Erin P. J. Pearse, Luke G. Rogers, Huo-Jun Ruan, Robert S. Strichartz}

\author[Ionescu]{Marius Ionescu}
\address{Cornell University, Ithaca, NY 14850-4201 USA}
\email{mionescu@math.cornell.edu}

\author[Pearse]{Erin P. J. Pearse}
\address{University of Iowa, Iowa City, IA 52246-1419 USA}
\email{erin-pearse@uiowa.edu}
\thanks{The work of EPJP was partially supported by the University of Iowa Department of Mathematics NSF VIGRE grant DMS-0602242.}

\author[Rogers]{Luke G. Rogers}
\address{University of Connecticut, Storrs, CT 06269-3009 USA}
\email{rogers@math.uconn.edu}

\author[Ruan]{Huo-Jun Ruan}
\address{Zhejiang University, Hangzhou, 310027, China, and Cornell University, Ithaca, NY 14850-4201 USA}
\email{ruanhj@zju.edu.cn} %ruanhj@math.cornell.edu}
\thanks{The work of HJR was partially supported by grant NSFC 10601049, and by the Future Academic Star project of Zhejiang University.}

\author[Strichartz]{Robert S. Strichartz}
\address{Cornell University, Ithaca, NY 14850-4201 USA}
\email{str@math.cornell.edu}
\thanks{The work of RSS was partially supported by NSF grant DMS 0652440.}

\begin{abstract}
  For the Laplacian $\Delta$ defined on a p.c.f. self-similar fractal, we give an explicit formula for the resolvent kernel of the Laplacian with Dirichlet boundary conditions, and also with Neumann boundary conditions. That is, we construct a symmetric function $G^{(\lambda)}$ which solves $(\lambda \mathbb{I} - \Delta)^{-1} f(x) = \int G^{(\lambda)}(x,y) f(y) \, d\mu(y)$. The method is similar to Kigami's construction of the Green kernel in \cite[\S3.5]{Kig01} and is expressed as a sum of scaled and ``translated'' copies of a certain function $\psi^{(\lambda)}$ which may be considered as a fundamental solution of the resolvent equation. Examples of the explicit resolvent kernel formula are given for the unit interval, standard Sierpinski gasket, and the level-3 Sierpinski gasket $SG_3$. %We also discuss an application to unbounded fractafolds.
\end{abstract}

\keywords{Dirichlet form, graph energy, discrete potential theory, discrete Laplace operator, graph Laplacian, eigenvalue, resolvent formula, post-critically finite, self-similar, fractal.}

\subjclass[2000]{Primary: 28A80, 35P99, 47A75. Secondary: 39A12, 39A70, 47B39.}

\date{\bf\today}

%\end{frontmatter}

\maketitle

\setcounter{tocdepth}{1} \tableofcontents

\allowdisplaybreaks

\section{Introduction}

A theory of analysis on certain self-similar fractals is developed around the Laplace operator \Lap in \cite{Kig01}. In this paper, we consider the resolvent function $(\gl \id - \Lap)^{-1}$ and obtain a kernel for this function when the Laplacian is taken to have Dirichlet or Neumann boundary conditions. That is, we construct a symmetric function $G\parlam$ which weakly solves $(\gl \id - \Lap) G\parlam(x,y) = \gd(x,y)$, meaning that
\linenopar
\begin{equation}\label{eqn:resolvent-kernel-intro}
  \int G\parlam(x,y) f(y) \, d\gm(y) = (\gl \id - \Lap)^{-1} f(x).
\end{equation}
For the case $\gl=0$, this is just the Green function for \Lap. Consequently, it is not surprising that our construction is quite analogous to that of the Green function as carried out in \cite[\S3.5]{Kig01}; see also \cite[\S2.6]{Str06} for the case of the Sierpinski gasket (and the unit interval) worked out in detail, and \cite{Kig03}.

We present our main results in \S\ref{sec:preview-of-results}, just after the introduction of the necessary technical terms in \S\ref{sec:background,notation,and-fundamentals}.
It is the authors' hopes that the resolvent kernel will provide an alternate route to obtaining heat kernel estimates (see \cite{FitzHambKuma94,HamKuma99}) in this setting, as well as other information about spectral operators of the form
\linenopar
\begin{align}\label{eqn:general-spectral-operator}
  \gx(\Lap) = \int_\gG \gx(\gl) (\gl\id-\Lap)^{-1} \, d\gl,
\end{align}
in the same manner as used by Seeley \cite{See67,See69} for the Euclidean situation. Some initial results in this direction will appear in \cite{Rogers:ResolventKernelII}.

To explain the method of construction for the resolvent kernel, we carry out the procedure in the case of the unit interval in \S\ref{sec:resolvent-kernel-on-the-unit-interval}; we believe this particular method has not previously appeared in the literature. In \S\ref{sec:Dirichlet-resolvent-kernel}, we show how the construction may be generalized to any post-critically finite self-similar fractal.
%The proof relies upon certain technical lemmas which we have postponed to section \S\ref{sec:Dirichlet-resolvent-kernel} for the ease of the reader.
In \S\ref{sec:example-sg}, we give the explicit formulas for the Sierpinski gasket and in \S\ref{sec:example-sg3} we give the explicit formulas for a variant of the Sierpinski gasket which we call $S\negsp[2]G_3$. %This latter example is included to illustrate \marginpar{Add reason ...} ------. Finally, we indicate some possible applications in \S\ref{sec:applications}.

\subsubsection*{Acknowledgements} The authors are indebted to the referee for many keen observations and a scrupulously detailed report.

\subsection{Background, notation, and fundamentals}
\label{sec:background,notation,and-fundamentals}

We work in the context of post-critically finite (p.c.f.) self-similar fractals. The full and precise definition may be found in \cite[Def.~1.3.13]{Kig01}, but for the present context it suffices to think of such objects as fractals which may be approximated by a sequence of graphs, via an iterated function system (IFS). A more general setting is possible; cf. \cite{Kig03}. We now make this more precise.

\begin{defn}\label{def:IFS}
  Let $\{F_1,F_2,\dots,F_J\}$ be a collection of Lipschitz continuous functions on \bRd with $0 < Lip(F_j) < 1$ for each $j$. Let $X$ denote the \emph{attractor} of this IFS, that is, $X$ is the unique nonempty and compact fixed point of the set mapping $F(A) := \bigcup_{j=1}^J F_j(A)$. The set $X$ is frequently also called the \emph{self-similar set} associated to this IFS; existence and uniqueness of $X$ was shown in \cite{Hut81}. 
\end{defn}

From the IFS introduced in the previous definition, we now build a sequence of graphs which approximates $X$ in a suitable sense.

\begin{defn}\label{def:words-on-1,2,...,J}
  We use $\word = \word_1 \word_2 \cdots \word_m$ to denote a \emph{word} of length $|\word|=m$ on the symbol alphabet $\{1,2,\dots,J\}$. This notation is used to denote a composition of the mappings $F_j$ via $F_\word = F_{\word_1} \comp F_{\word_2} \comp \dots \comp F_{\word_m}$. Similarly, $K_\word = F_\word(X)$ refers to a certain \emph{$m$-level cell}. The collection of all finite words is denoted $W_\ast := \bigcup_m \{1,2,\dots,J\}^m$.
\end{defn}

\begin{defn}\label{def:graph-approximants}
  Each map $F_j$ of the IFS defining $X$ has a unique fixed point $x_j$. The \emph{boundary} of $X$ is the largest subset $V_0 \ci \{x_1,\dots, x_N\}$ satisfying
  \[F_\word(X) \cap F_{\word'}(X) \ci F_\word(V_0) \cap F_{\word'}(V_0), \q
    \text{ for any $\word \neq \word'$ with $|\word |=|\word'|$.}\]
  The p.c.f. condition mentioned above means that cells $F_j(X)$ intersect only at points of $F_j(V_0)$. The \emph{boundary of an $m$-cell} is $\bdy K_\word := F_\word(V_0)$.

  Let $G_0$ be the complete graph on $V_0$, and inductively define $G_m := F(G_{m-1})$.   Also, we use the notation $x \nbr[m] y$ to indicate that $x$ and $y$ are \emph{$m$-level neighbours}, i.e., that there is an edge in $G_m$ with endpoints $x$ and $y$.
  We use $V_m = F^m(V_0)$ to denote the vertices of $G_m$, and $V_\ast := \bigcup_m V_m$. The fractal $X$ is the closure of $V_\ast$ with respect to either the Euclidean or resistance metric. A discussion of the resistance metric may be found in \cite[\S1.6]{Str06} or \cite[\S2.3]{Kig01}.
  %\footnote{Strictly speaking, for $X$ to be the closure of $V_\ast$ with respect to the resistance metric, one must require the relevant harmonic structure to be \emph{regular}. However, this condition is satisfied in the present setting, as follows from the requirement that $0<r_j<1$ in Definition~\ref{def:graph-energy}.}; see \cite[\S1.6]{Str06} for details.
\end{defn}

Now we are able to make precise the sense in which $X$ is the limit of graphs: one may compute the Laplacian (and other analytic objects, including graph energy, resistance distance, etc.) for functions $u:X \to \bR$ by computing it on $G_m$ and taking the limit as $m \to \iy$.

\begin{defn}\label{def:graph-energy}
  We assume the existence of a \emph{self-similar (Dirichlet) energy form} \energy on $X$. That is, for functions $u:X\to\bR$, one has
  \linenopar
  \begin{equation}\label{eqn:def:energy-form}
    \energy(u) = \sum_{j=1}^J r_j^{-1} \energy(u \comp F_j),
  \end{equation}
  for some choice of \emph{renormalization factors} $r_1,\dots,r_J \in (0,1)$ depending on the IFS. This quadratic form is obtained from the approximating graphs as the appropriately renormalized limit of $\energy_{G_m}(u) := \energy_{G_m}(u,u)$, where the $m$-level bilinear form is defined
  \linenopar
  \begin{equation}\label{eqn:mth-graph-energy}
    \energy_{G_m}(u,v) := \frac12 \sum_{\substack{x,y \in V_m \\ x \mnbr y}} c_{xy} (u(x)-u(y))(v(x)-v(y)).
  \end{equation}
  The constant $c_{xy}=c^{(m)}_{xy}$ refers to the \emph{conductance} of the edge in $G_m$ connecting $x$ to $y$ (with $c_{xy}=0$ if there is no such edge). The dependence of $c^{(m)}_{xy}$ on $m$ is typically suppressed, as $x \mnbr y$ for at most one value of $m$ on p.c.f. fractals. The \emph{domain} of \energy is
  \linenopar
  \begin{align*}\label{eqn:def:domain-of-energy}
    \dom \energy := \{u:X \to \bR \suth \energy(u) < \iy\}.
  \end{align*}
\end{defn}

\begin{defn}\label{def:self-similar-measure}
  We also assume the existence of a \emph{self-similar measure} \gm
  \linenopar
  \begin{equation}\label{eqn:def:measure}
    \gm(A)=\sum_{j=1}^J \gm_j \gm(F_j^{-1}(A)),
  \end{equation}
  with weights $\gm_j$ satisfying $0<\mu_j<1$ and $\sum_j \gm_j = 1$, and normalized so that $\gm(X)=1$. With the notation of Definition~\ref{def:words-on-1,2,...,J}, the measure of the $m$-cell $K_\word$ is denoted by $\gm(K_\word) = \gm_\word := \gm_{\word_1}\gm_{\word_2}\dots\gm_{\word_m}$.
  %\marginpar{Mention the `other standard measure'? (When $\gm_j = r_j^{s}$ where $s$ is the unique value so this gives a prob measure.)}
  The \emph{standard measure} refers to the case $\gm_j = \frac1J$, for each $j$.
\end{defn}

\begin{remark}\label{rem:r_j-vs-Lip(F_j)-vs-mu_j}
  The renormalization factor $r_j$ should be confused neither with the contraction factors $Lip(F_j)$ of the maps of the IFS, nor the weights $\gm_j$ of the self-similar measure \gm. The values of these constants are completely independent.

  Also, it should be noted that the existence of a self-similar energy asserted in Definition~\ref{def:graph-energy} is a strong assumption. While the the self-similar measures of Definition~\ref{def:self-similar-measure} always exist \cite{Hut81}, the existence of the self-similar energy is a much more delicate question; cf. \cite{Sabot97}.
\end{remark}

\begin{defn}\label{def:Laplacian}
  The \emph{Laplacian} is defined weakly in terms of the energy form. For $u \in \dom \energy$ and $f$ continuous, one says $u \in \dom \Lap$ with $\Lap u = f$ iff
  \linenopar
  \begin{equation}\label{eqn:def:Laplacian}
    \energy(u,v)
    = - \int_X f v \, d\gm, \q \textrm{for all } v \in \dom_0 \energy,
  \end{equation}
  where $\dom_0 \energy$ is the set of functions in $\dom\energy$ which vanish on $\bdy X = V_0$. Note that the Laplacian depends on the choice of measure \gm.

More generally, if \eqref{eqn:def:Laplacian} holds with $f \in L^2(d\gm)$, then one says $u \in \dom_{L^2} \Lap$; and if 
  \linenopar
  \begin{equation}\label{eqn:def:measure-Laplacian}
    \energy(u,v)
    = - \int_X v \, d\gm, \q \textrm{for all } v \in \dom_0 \energy,
  \end{equation}
  for a finite signed measure \gm with no atoms, then one says $u \in \dom_{\sM} \Lap$.
\end{defn}

It follows from \eqref{eqn:def:energy-form}, \eqref{eqn:def:measure} and Definition~\ref{def:Laplacian} that $\Lap$ satisfies the scaling identity
\linenopar
\begin{equation}\label{eqn:Laplacian-scaling}
  \Lap(u\comp F_j) = r_j \gm_j (\Lap u) \comp F_j,
\end{equation}
and pointwise formula given by the uniform limit
\linenopar
\begin{equation}\label{eqn:Laplacian-pointwise}
  \Lap u(x)
  = \lim_{m\to\iy}\left(\int_X h_x^{(m)} \,d\gm\right)^{-1}\Lap_m u(x),
  \q \text{for } x \in V_\ast \less V_0,
\end{equation}
where $h_x^{(m)}$ is a piecewise harmonic spline satisfying $h_x^{(m)}(y) = \gd_{xy}$ for $y\in V_m$, and
\linenopar
\begin{equation}\label{eqn:def:h_x-pw-harmonic-spline}
  \Lap_m u(x) = \sum_{y \mnbr x} c_{xy}(u(y)-u(x)),
  \q \text{for } x \in V_m.
\end{equation}
Roughly speaking, $h_x^{(m)}$ is a ``tent'' function with peak at $x$ which vanishes outside the $m$-cell containing $x$. See \cite[\S2.1--\S2.2]{Str06} for details.

\begin{defn}\label{def:normal-derivative}
  The \emph{normal derivative} of a function $u$ is computed at a boundary point $q \in V_0$ by
  \linenopar
  \begin{equation}\label{eqn:def:normal-derivative-level0}
    \dn u(q)
    := \lim_{m \to \iy} \frac1{r_i^{m}} \sum_{y \mnbr q} (u(q)-u(y)),
    \qq q \in V_0.
  \end{equation}
  At a general junction point $x=F_\word q$, the normal derivative is computed with respect to a specific $m$-cell $K_\word$:
  \linenopar
  \begin{equation}\label{eqn:def:normal-derivative}
    \dn[K_\word] u(x) = \dn[K_\word] u(F_\word q)
    := \frac1{r_{\word_1} \cdots r_{\word_m}} \dn (u \comp F_\word)(q).
  \end{equation}
\end{defn}

\subsection{Statement of main result}
\label{sec:preview-of-results}

\begin{theorem}\label{thm:resolvent-formula-preview}
    %\marginpar{Luke asks everyone: ``If $r_w \gm_w \gl$ is a Dirichlet eigenvalue is it the case that also \gl is a Dirichlet eigenvalue?''}
  Assume that \gl is not a Dirichlet eigenvalue of \Lap, and neither is $r_\word \gm_\word \gl$, for any $\word \in W_\ast$. For the Laplacian on $X$ with Dirichlet boundary conditions, the resolvent kernel $G\parlam$ defined by \eqref{eqn:resolvent-kernel-intro} is given by the formula
  \linenopar
  \begin{align}
    G\parlam(x,y)
    =& \sum_{\word \in W_\ast} r_\word \gY\parlam[r_\word \gm_\word \gl] (F_\word^{-1} x, F_\word^{-1}y),
      \label{eqn:resolvent-formula-preview} \\
    \text{where }
    \gY\parlam(x,y)
    :=& \negsp[8] \sum_{p,q \in V_1 \less V_0} \negsp[8] G\parlaq{pq} \gy\parlaq{p}(x) \gy\parlaq{q}(y).
    \label{eqn:Psi-preview}
  \end{align}
  where convention stipulates $\gY\parlam[r_\word \gm_\word \gl] (F_\word^{-1} x, F_\word^{-1}y) = 0$ for $x,y$ not in $F_\word X$.
  In formula \eqref{eqn:Psi-preview}, $\gy\parlaq{p}$ is the solution to the resolvent equation at level 1, i.e.
  \linenopar
  \begin{equation}\label{eqn:basic-resolvent-solution-preview}
    \begin{cases}
      (\gl \id - \Lap)\gy\parlaq{p} = 0, &\text{on each } K_j=F_j(X), \\
      \gy\parlaq{p}(q) = \gd_{pq}, &\text{for $p \in V_1 \less V_0$ and $q \in V_1$},
    \end{cases}
  \end{equation}
  where $\gd_{pq}$ is the Kronecker delta. The coefficients $G\parlaq{pq}$ in \eqref{eqn:Psi-preview} arise as the entries of the inverse of the matrix $B$ given by
  \linenopar
  \begin{equation}\label{eqn:Bobmatrix-preview}
    B\parlaq{pq} := \sum_{K_j \ni q} \dn[K_j] \gy\parlaq{p}(q),
    \qq q \in F_j(V_0),
  \end{equation}
  where the sum is taken over all 1-cells containing $q$.
\end{theorem}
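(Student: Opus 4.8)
The plan is to follow Kigami's strategy for the Green kernel \cite[\S3.5]{Kig01}, modified to carry the spectral parameter \gl through the scaling. First I would fix \gl avoiding the excluded eigenvalues and verify that the level-1 problem \eqref{eqn:basic-resolvent-solution-preview} is solvable: since $r_j\gm_j\gl$ is not a Dirichlet eigenvalue, $(\gl\id-\Lap)$ restricted to each cell $K_j$ with zero boundary data is invertible (via the scaling identity \eqref{eqn:Laplacian-scaling}, a solution on $K_j$ pulls back to a solution of the $r_j\gm_j\gl$-resolvent problem on $X$), so the piecewise-defined $\gy\parlaq{p}$ exists and is unique. Then I would check that the matrix $B$ in \eqref{eqn:Bobmatrix-preview} is invertible — this is where the hypothesis that \gl itself is not a Dirichlet eigenvalue enters: if $B$ were singular, a nontrivial kernel vector would produce a nonzero combination $\sum_p \ga_p \gy\parlaq{p}$ that is continuous across all junction points (matching normal derivatives), vanishes on $V_0$, and solves $(\gl\id-\Lap)u=0$ globally, i.e. a Dirichlet eigenfunction. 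Hence $G\parlaq{pq}=(B^{-1})_{pq}$ is well-defined and $\gY\parlam$ makes sense.

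Next I would establish that $\gY\parlam$ is the ``fundamental solution'' at one scale: namely that $u(x)=\int \gY\parlam(x,y)f(y)\,d\gm(y)$ solves $(\gl\id-\Lap)u=f$ on $X\less V_0$ with the correct behaviour at the level-1 junction points, i.e. it produces the right jump in normal derivatives at each $p\in V_1\less V_0$ while vanishing on $V_0$. This is a direct computation: apply $(\gl\id-\Lap)$ to the definition \eqref{eqn:Psi-preview}, use that each $\gy\parlaq{p}$ solves the homogeneous resolvent equation on cells, and use the defining relation $\sum_q B\parlaq{pq} G\parlaq{qr} = \gd_{pr}$ together with the normal-derivative matching \eqref{eqn:Bobmatrix-preview} to see that the singular part of $(\gl\id-\Lap)\gY\parlam(\cdot,y)$ is exactly a unit point mass (in the weak/normal-derivative sense) at $y$ when $y\in V_1\less V_0$, and zero elsewhere. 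In effect $\gY\parlam$ is the analogue of the single-scale piece of the Green kernel but with the resolvent equation in place of harmonicity.

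Then comes the self-similar summation. Using the scaling identity \eqref{eqn:Laplacian-scaling}, the pullback $r_\word \gY\parlam[r_\word\gm_\word\gl](F_\word^{-1}x,F_\word^{-1}y)$ is the fundamental-solution piece supported on the cell $K_\word$, contributing point masses at the junction points of $G_{|\word|+1}$ interior to $K_\word$. Summing over all $\word\in W_\ast$, every interior vertex of $V_\ast\less V_0$ is hit exactly once as a ``new'' junction point of some cell, so the telescoping of normal-derivative contributions shows that $G\parlam$ defined by \eqref{eqn:resolvent-formula-preview} satisfies $(\gl\id-\Lap)G\parlam(\cdot,y)=\gd(\cdot,y)$ weakly and vanishes on $V_0$ — this is exactly the Dirichlet resolvent kernel. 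Symmetry of $G\parlam$ follows from symmetry of each $B^{-1}$ (which in turn follows from the symmetry of the energy form, via a Green-Gauss identity relating $B\parlaq{pq}$ to $\energy(\gy\parlaq{p},\gy\parlaq{q}) - \gl\int \gy\parlaq{p}\gy\parlaq{q}\,d\gm$, a symmetric bilinear expression) and from the symmetric roles of $x$ and $y$ in \eqref{eqn:resolvent-formula-preview}.

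The main obstacle I expect is the convergence of the infinite sum in \eqref{eqn:resolvent-formula-preview} and the justification that the formal weak identity survives the limit: one must show $\sum_\word r_\word \gY\parlam[r_\word\gm_\word\gl](F_\word^{-1}x,F_\word^{-1}y)$ converges (uniformly, or in an appropriate norm) so that term-by-term application of $(\gl\id-\Lap)$ and integration against $f$ are legitimate. Since $r_\word\gm_\word\to 0$, for deep words the argument $r_\word\gm_\word\gl$ is small and $\gY\parlam[r_\word\gm_\word\gl]$ is a small perturbation of the \gl=0 case, so comparison with the (known, convergent) Green-kernel sum in \cite[\S3.5]{Kig01} should control the tail; making this comparison quantitative — uniformly in $x,y$ and with explicit dependence on \gl away from the excluded spectrum — is the technical heart of the argument. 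The Neumann case is then handled by the same construction with $G_0$'s complete-graph boundary replaced appropriately, or by adding the harmonic correction term, and is deferred to the body of the paper.
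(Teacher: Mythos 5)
Your outline follows essentially the same architecture as the paper's proof (level-1 pieces $\gy\parlaq{p}$, invertibility of $B$ by producing a Dirichlet eigenfunction from a kernel vector, symmetry via Gauss--Green, geometric convergence from $r_\word$ together with continuity at $\gl=0$), but it has a genuine gap at the telescoping step, which is the technical heart of the argument. You claim that the cell term $r_\word \gY\parlam[r_\word \gm_\word \gl](F_\word^{-1}x,F_\word^{-1}y)$ contributes point masses only at the junction points of $V_{|\word|+1}$ interior to $K_\word$, and that ``every interior vertex is hit exactly once as a new junction point, so the telescoping of normal-derivative contributions'' finishes the proof. That description of the mass structure is wrong in a way that hides the step needing proof: the cell term, extended by zero off $K_\word$, vanishes on $\bdy K_\word$ but has nonvanishing normal derivatives there, so the level-$m$ sum also deposits Dirac masses at the \emph{old} vertices $V_m \less V_0$. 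If each vertex received a mass only once, as you describe, nothing would ever cancel and the series would carry a mass at every point of $V_\ast \less V_0$ rather than the single mass $\gd_x(y)$. What must actually be shown is that at each $z \in V_m \less V_0$ the boundary contributions combine to $\bigl(\gx\parlaq{z,m+1}(x)-\gx\parlaq{z,m}(x)\bigr)\gd_z(y)$ (Lemma~\ref{thm:Psi_m-has-dirac-masses}), and the proof of this rests on the cross-scale identity $\sum_{s\in V_1 \less V_0} B\parlaq{ps}\fundeif{q}(s) = -B\parlaq{pq}$ (Lemma~\ref{thm:B-is-related-across-scales}), obtained by Gauss--Green on each $1$-cell together with the continuity of $\Lap\fundeif{q}$ at interior vertices. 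This ingredient is absent from your outline; without it ``telescoping'' is an assertion, not an argument.

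Two smaller points. First, your claim that $u(x)=\int \gY\parlam(x,y)f(y)\,d\gm(y)$ solves $(\gl\id-\Lap)u=f$ on $X\less V_0$ is false: such a $u$ is a finite linear combination of the $\gy\parlaq{p}$ and so satisfies the \emph{homogeneous} equation on the interiors of the $1$-cells. The correct single-scale statement, which the paper uses, is $(\gl\id-\Lap_y)\gY\parlam(x,y)=\sum_{p\in V_1\less V_0}\gy\parlaq{p}(x)\gd_p(y)$; that is, $\gY\parlam$ accounts only for the masses at $V_1\less V_0$, and it is exactly the leftover masses that the deeper terms in \eqref{eqn:resolvent-formula-preview} must remove (this also corrects your ``zero elsewhere'' remark for $y\notin V_1$). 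Second, in the invertibility argument the kernel relation is $\sum_q B\parlaq{pq}a_q=0$ while the sum of normal derivatives of $u=\sum_q a_q\gy\parlaq{q}$ at $p$ is $\sum_q a_q B\parlaq{qp}$, so the symmetry of $B$ must be established \emph{before} this step; your energy/Gauss--Green identity does give symmetry (and is a legitimate alternative to the paper's route through \eqref{eqn:fundeif-normderivs-equal}), but the logical order should be made explicit. Your convergence strategy---comparison with the $\gl=0$ case for deep words and geometric decay of $r_\word$---is essentially what the paper does and is fine.
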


This result appears with proof as Theorem~\ref{thm:Dirichlet-resolvent-formula}; a similar formula for Neumann boundary conditions appears in Theorem~\ref{thm:Neumann-resolvent-formula}.

\begin{remark}\label{rem:sum-over-sets-notation}
  In \eqref{eqn:Bobmatrix-preview} and elsewhere, we use the notation $\sum_{K_j \ni q}$ to indicate a sum being taken over the set $\{j \suth q \in K_{j} = F_j(X)\}$.
\end{remark}

%\marginpar{Give Kigami's formula for Green kernel, to compare?}
The rationale for the definitions  \eqref{eqn:resolvent-formula-preview}--\eqref{eqn:Bobmatrix-preview} is best explained by the following heuristic argument and by comparison to \cite[Thm.~2.6.1]{Str06}. One would like $\gY\parlam$ to be a weak solution to the resolvent equation on a 1-cell $C = F_i(X)$, except at the boundary where some Dirac masses may appear. However, this implies that $r_i \gY\parlam[r_i \gm_i \gl](F_i^{-1}x,F_i^{-1}y)$ will be a weak solution on the 2-cell $F_i(C)$, and in the limit \eqref{eqn:resolvent-formula-preview} gives a solution on the entire fractal. Each term added to the partial sum of \eqref{eqn:resolvent-formula-preview} corresponds to canceling the Dirac masses at the previous stage and introducing new ones at the next; these are wiped away in the limit.

For $\gY\parlam$ to be a weak solution at level $1$, we mean that if $u \in \dom \Lap$ and $u$ vanishes on $\bdy X = V_0$, then
\linenopar
\begin{align*}
  \int_X \gY\parlam(x,y) (\gl\id-\Lap) u(y) \,d\gm(y) = \sum_{p \in V_1 \less V_0} \gy \parlaq{p}(x) u(p).
\end{align*}
With \eqref{eqn:Psi-preview} as given above, integration by parts and linearity give
%$\gY\parlam(x,y)$ behaves like the Dirichlet resolvent on $V_1\setminus V_0$, because
\linenopar
\begin{align*}
  \int_X \gY\parlam(x,y) (\gl\id-\Lap) u(y) \,d\gm(y)
  &= \int_X \left[(\gl\id-\Lap_y) \gY\parlam(x,y)\right] u(y) \,d\gm(y) \\
  &= \sum_{p,q \in V_1 \less V_0} G\parlaq{pq} \gy\parlaq{p}(x) \int_X\left[(\gl\id-\Lap) \gy\parlaq{q}(y)\right] u(y) \,d\gm(y),
\end{align*}
where we used the notation $\Lap_y$ to indicate that the operator \Lap is applied with respect to the variable $y$.

Now by \eqref{eqn:basic-resolvent-solution-preview}, $\gy\parlaq{q}$ satisfies the resolvent equation on the interior of the 1-cells, but $-\Lap \gy\parlaq{q}$ has Dirac masses at the boundary points with weights $B\parlaq{qs} := \sum_{K_j \ni s} \dn[K_j] \gy\parlaq{q}(s)$. In other words, 
%when $\gy\parlaq{q}$ is extended by 0 to the cells not containing $q$, 
we have $\Lap \gy\parlaq{q} = \gl\gy\parlaq{q}$ except on $V_1 \setminus V_0$, so that $(\gl\id-\Lap) \gy\parlaq{q}(y) = \sum_{s \in V_1 \less V_0} B\parlaq{qs} \gd_s(y)$, where $\gd_s$ is the Dirac mass at $s$. Therefore, the calculation above may be continued:
\linenopar
\begin{align*}
  \int_X \gY\parlam(x,y) (\gl\id-\Lap) u(y) \,d\gm(y)
  &= \negsp[8] \sum_{p,q,s \in V_1 \less V_0} \negsp[8] \gy\parlaq{p}(x) G\parlaq{pq} B\parlaq{qs} \int_X \gd_s(y) u(y) \,d\gm(y)\\
  &= \sum_{p \in V_1 \less V_0} \gy\parlaq{p}(x) u(p).
\end{align*}

The foregoing computation is the origin and motivation for \eqref{eqn:Psi-preview}--\eqref{eqn:Bobmatrix-preview}. A key technical point is the use of a linear combination $u$ of vectors $\gy\parlaq{q}$ for which $(\gl\id-\Lap)u$ is a \emph{single} (weighted) Dirac mass at $p$. From the calculation, it is clear that this hinges on the invertibility of $B$; this is the significance of Lemma~\ref{thm:B-is-invertible}.
     \begin{comment}
     \linenopar
     \begin{align*}
       &=\sum_{p,q\in V_1 \setminus V_0} G\parlaq{pq} \gy\parlaq{p}(x)
         (\gl\id-\Lap_{y}) \gy\parlaq{q}(y) &&\text{by \eqref{eqn:Psi-preview}} \\
       &=\sum_{p,q\in V_1 \setminus V_0} G\parlaq{pq} \gy\parlaq{p}(x)
         \sum_{p'\in V_1 \setminus V_0} B\parlaq{qp'} \gd_{p'}(y) &&\text{by \eqref{eqn:Bobmatrix-preview}} \\
       &=\sum_{p,p'\in V_1 \setminus V_0} \gd^{pp'} \gy\parlaq{p}(x) \gd_{p'}(y) \\
       &=\sum_{p\in V_1 \setminus V_0} \gy\parlaq{p}(x) \gd_{p}(y).
     \end{align*}
     where $\gd_{p}(y)$ is the Dirac mass at $y$.
     \end{comment}

As mentioned just above, once the solution is obtained on level 1, it may be transferred to a cell $F_\word(X)$ by rescaling appropriately. However, this is not sufficient to allow us to compute $(\gl\id-\Lap_{y}) G\parlam(x,y)$; some finesse is required to ensure that these solutions match where these cells intersect, that is, on the boundary points $V_{m+1}\setminus V_0$. Some further work is needed; this is carried out in the technical lemmas of \S\ref{sec:Dirichlet-resolvent-kernel}.

\section{The resolvent kernel for the unit interval}
\label{sec:resolvent-kernel-on-the-unit-interval}

The unit interval $I=[0,1]$ has a self-similar structure derived from the IFS consisting of $F_1(x) = \frac x2$ and $F_2(x) = \frac x2 + \frac12$. In this section, we exploit this perspective to derive the resolvent kernel for the Dirichlet Laplacian on $I$ by mimicking the construction of the Green function in \cite[\S3.5]{Kig01} (see also \cite[\S2.6]{Str06}). This exposition is intended to make the general case (presented in the next section) easier to digest. We build towards the result stated formally in Prop.~\ref{thm:resolvent-formula-interval-preview}.

\begin{prop}\label{thm:resolvent-formula-interval-preview}
  Let $\Lap = \frac{d^2}{dx^2}$ be the Laplacian on the unit interval $I=[0,1]$, taken with Dirichlet boundary conditions.
  If \gl is not a Dirichlet eigenvalue of \Lap, then the resolvent kernel $G\parlam$ in \eqref{eqn:resolvent-kernel-intro} is given by
  \linenopar
  \begin{align}\label{eqn:resolvent-formula-interval-preview}
    G\parlam(x,y)
    =& \sum_{m=0}^\iy \sum_{|\word|=m} \frac1{2^m} \gY\parlam[\gl/4^m] (F_\word^{-1} x, F_\word^{-1}y), \\
    \text{for }\q
    \gY\parlam(x,y)
    :=& \frac{\sinh \frac{\sqrt\gl}2}{2\sqrt\gl \cosh \frac{\sqrt\gl}2} \gy\parlam(x) \gy\parlam(y), \label{eqn:Psi-interval-preview} \\
    \text{and }\q
    \gy\parlam(x) %= \gy\parlaq{1/2}(x)
    :=& \frac{1}{\sinh \frac{\sqrt{\gl}}{2}}
    \begin{cases}
      \sinh \sqrt{\gl} x, & x\leq \frac12, \\
      \sinh \sqrt{\gl} (1-x),& x\geq \frac12,
    \end{cases}
    \label{eqn:psi-interval-preview}
  \end{align}
  where convention stipulates $\gY\parlam[\gl/4^m] (F_\word^{-1} x, F_\word^{-1}y) = 0$ for $x,y$ not in $F_\word I$.
\end{prop}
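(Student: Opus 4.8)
The plan is to verify directly that the function $G\parlam$ defined by \eqref{eqn:resolvent-formula-interval-preview} satisfies \eqref{eqn:resolvent-kernel-intro}, following the outline of the heuristic argument given after Theorem~\ref{thm:resolvent-formula-preview} but now with all the pieces explicit. First I would record the basic facts about $\gy\parlam$: it is the unique solution of $u'' = \gl u$ on each of $F_1 I = [0,\frac12]$ and $F_2 I = [\frac12,1]$ that vanishes at $0$ and $1$ and equals $1$ at $\frac12$, which is exactly \eqref{eqn:psi-interval-preview} since $x \mapsto \sinh\sqrt\gl x$ and $x \mapsto \sinh\sqrt\gl(1-x)$ span the solution space. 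I would then compute the jump in the derivative of $\gy\parlam$ at the single interior vertex $\frac12 \in V_1 \less V_0$, i.e. the analogue of $B\parlaq{pq}$ in \eqref{eqn:Bobmatrix-preview}; a short computation gives $\dn[F_1 I]\gy\parlam(\tfrac12) + \dn[F_2 I]\gy\parlam(\tfrac12) = 2\sqrt\gl\cosh\frac{\sqrt\gl}{2}/\sinh\frac{\sqrt\gl}{2}$, so the $1\times 1$ matrix $B$ is invertible (here $\gl$ not a Dirichlet eigenvalue guarantees $\sinh\frac{\sqrt\gl}{2}\neq 0$), and its inverse is the scalar coefficient appearing in \eqref{eqn:Psi-interval-preview}. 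This confirms, via the integration-by-parts computation already displayed in the excerpt, that $\gY\parlam$ is the weak solution at level $1$: for $u \in \dom\Lap$ vanishing on $V_0$, $\int_0^1 \gY\parlam(x,y)(\gl - \Lap)u(y)\,dy = \gy\parlam(x)u(\tfrac12)$.

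Next I would set up the scaling. Using the interval scaling identity — the specialization of \eqref{eqn:Laplacian-scaling} with $r_j = \mu_j = \frac12$, so that $\Lap(u\comp F_j) = \frac14(\Lap u)\comp F_j$ — one checks that $r_\word \gY\parlam[\gl/4^m](F_\word^{-1}x, F_\word^{-1}y) = 2^{-m}\gY\parlam[\gl/4^m](F_\word^{-1}x,F_\word^{-1}y)$ is a weak solution of the resolvent equation on the $m$-cell $F_\word I$, with Dirac masses only at $\bdy(F_\word I) = F_\word V_0$, and that these masses are precisely $\pm\gy\parlam$-type spikes that telescope: the Dirac mass contributed at an interior vertex $p \in V_{m+1}\less V_m$ by the level-$m$ terms is exactly canceled by that contributed by the level-$(m+1)$ terms, because $\gy\parlam(q) = \gd_{pq}$ pins the boundary values. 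I would make this precise by computing the partial sum $S_N = \sum_{m=0}^{N}\sum_{|\word|=m}2^{-m}\gY\parlam[\gl/4^m](F_\word^{-1}x,F_\word^{-1}y)$ and showing $(\gl - \Lap_y)S_N(x,\cdot) = \sum_{p\in V_{N+1}\less V_0} (\text{something})\cdot\gd_p$ weakly, i.e. all interior masses up to level $N$ have canceled and only a layer at level $N+1$ remains.

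Then I would address convergence and the vanishing of the error term. Since $\gl/4^m \to 0$, for large $m$ the function $\gY\parlam[\gl/4^m]$ is uniformly close to the $\gl = 0$ object (the Green-kernel building block $\frac18\gy(x)\gy(y)$ with $\gy$ the tent function), and the factor $2^{-m}$ together with the standard estimate that $F_\word I$ has length $2^{-m}$ forces $\|S_N - S_\infty\|_\infty \to 0$; I would borrow the geometric-series bound from Kigami's Green-function construction \cite[\S3.5]{Kig01} essentially verbatim, since the $\lambda$-dependence only perturbs the summand by a bounded factor. Passing to the limit, the residual Dirac layer at level $N+1$ is tested against $u$ vanishing on $V_0$ and $(\gl-\Lap)u$ continuous, and one shows $\sum_{p\in V_{N+1}\less V_0}(\cdots)u(p) \to 0$ because the weights are $O(2^{-N})$ while $u$ is bounded — alternatively, one argues that $S_\infty$ solves $(\gl-\Lap)_y S_\infty(x,y) = \gd_x(y)$ weakly against all of $\dom_0\energy$, which is what \eqref{eqn:resolvent-kernel-intro} requires. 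Symmetry of $G\parlam$ is immediate from the symmetric form $\gy\parlam(x)\gy\parlam(y)$ of \eqref{eqn:Psi-interval-preview} and the symmetry of each $F_\word$. The main obstacle I anticipate is not any single computation but the bookkeeping of the telescoping cancellation together with a clean justification that the limit exchanges with the (weak) Laplacian — i.e. making rigorous the phrase "these are wiped away in the limit" — and for this I would lean on the uniform convergence estimate and the continuity of $\Lap u$ rather than trying to differentiate the series term by term.
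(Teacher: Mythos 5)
Your overall architecture --- verify that $\gy\parlam$ solves the level-$1$ problem \eqref{eqn:basic-resolvent-interval-solution}, identify the coefficient in \eqref{eqn:Psi-interval-preview} as the inverse of the $1\times1$ matrix $B\parlam=\tfrac{2\sqrt{\gl}\cosh(\sqrt{\gl}/2)}{\sinh(\sqrt{\gl}/2)}$, rescale using $r_\word=\gm_\word=2^{-m}$, and telescope the Dirac masses --- is sound, and is essentially the general construction of \S\ref{sec:Dirichlet-resolvent-kernel} specialized to $I$. But your final step contains a genuine error. After the telescoping one has $(\gl\id-\Lap_y)S_N(x,y)=\sum_{p\in V_{N+1}\setminus V_0}\gx\parlaq{p,N+1}(x)\gd_p(y)$, where $\gx\parlaq{p,N+1}$ is the level-$(N+1)$ spike of height $1$ at $p$ as in \eqref{eqn:def:xi}; these weights are \emph{not} $O(2^{-N})$ --- for the two dyadic endpoints of the level-$(N+1)$ cell containing $x$ they are of order $1$ and sum to approximately $1$. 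Hence $\int_I S_N(x,y)(\gl\id-\Lap)u(y)\,dy=\sum_{p\in V_{N+1}\setminus V_0}\gx\parlaq{p,N+1}(x)u(p)$, which is the piecewise $\gl$-eigenfunction interpolant of $u$ on $V_{N+1}$ evaluated at $x$, and it converges to $u(x)$, not to $0$; equivalently, the residual Dirac layer converges weak-$\ast$ to $\gd_x$, which is exactly what \eqref{eqn:resolvent-kernel-intro} requires. Indeed, if the residual really tended to zero you would conclude $(\gl\id-\Lap_y)G\parlam(x,\cdot)=0$, and since $\gl$ is not a Dirichlet eigenvalue this would force $G\parlam(x,\cdot)\equiv0$, a contradiction. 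Your ``alternative'' sentence ($S_\iy$ solves $(\gl\id-\Lap_y)S_\iy(x,y)=\gd_x(y)$ weakly) is the correct statement, but it is precisely the point that must be proved and it is incompatible with your primary ``$\to0$'' claim; the missing ingredient is the weak-$\ast$ convergence of the boundary layer to $\gd_x$, which is the content of Lemma~\ref{thm:Psi_m-has-dirac-masses} together with the limit step in the proof of Theorem~\ref{thm:Dirichlet-resolvent-formula}. (A minor slip elsewhere: the $\gl=0$ building block is $\tfrac14\gy(x)\gy(y)$, not $\tfrac18\gy(x)\gy(y)$; this only affects your boundedness remark, not the argument.)

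You should also be aware that this is not the route the paper takes for the interval, and the paper's route avoids the bookkeeping you were worried about. The proof of Prop.~\ref{thm:resolvent-formula-interval-preview} starts from the classical closed-form kernel \eqref{eqn:resolvent-interval-case x<y}, uses hyperbolic identities to show that $G\parlam(x,y)=\gY\parlam(x,y)$ whenever $x$ and $y$ lie in different $1$-cells, and shows that the same-cell remainder satisfies $R(x,y)=\tfrac12 G\parlam[\gl/4](2x,2y)$ as in \eqref{eqn:resolvent-interval-x,y<1/2}; iterating this exact self-similar recursion yields \eqref{eqn:resolvent-formula-interval-preview} directly, with no distributional limits needed (the series is finite off the diagonal and geometric on it). Your build-and-verify approach is legitimate and would, once the limit step is corrected as above, reprove the proposition --- but it essentially duplicates the general machinery of \S\ref{sec:Dirichlet-resolvent-kernel}, whereas the point of the interval section is that the known closed form lets one bypass that machinery entirely.
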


\begin{remark}[A preview of the general case]
  \label{rem:preview-of-general-formulation}
  Note that the sum in \eqref{eqn:resolvent-formula-interval-preview} is finite if $x \neq y$, or if $x=y$ is dyadic. More importantly, $\gy\parlam = \gy\parlaq{1/2}$ is the solution to the resolvent equation at level 1, i.e.
  \linenopar
  \begin{equation}\label{eqn:basic-resolvent-interval-solution}
    \begin{cases}
      (\gl \id - \Lap)\gy\parlam = 0, &\text{on } (0,\tfrac12) \text{ and } (\tfrac12,1), \\
      \gy\parlam(0) = \gy\parlam(1) = 0, &\text{and } \gy\parlam(\tfrac12) = 1.
    \end{cases}
  \end{equation}
  In \S\ref{sec:Dirichlet-resolvent-kernel}, we develop the resolvent kernel in the general case from these observations.
\end{remark}

In keeping with the self-similar spirit of the sequel, we use the term \emph{1-cell} in reference to the subintervals $[0,\tfrac12]$ and $[\tfrac12,1]$ in the following proof.

  \begin{figure}%[h]
    \centering
    \scalebox{0.90}{\includegraphics{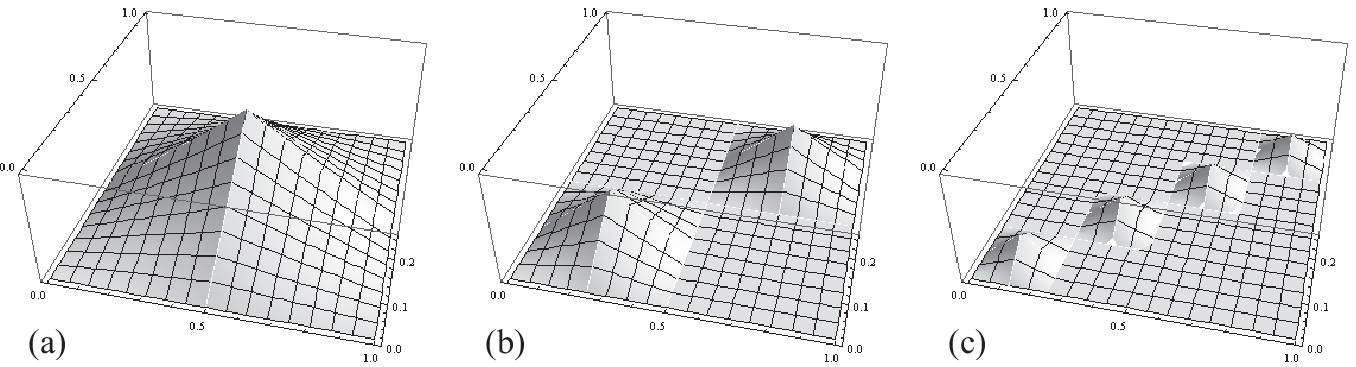}}
    \caption{\captionsize Mathematica plot of $\gY\parlam$ from Prop.~\ref{eqn:resolvent-formula-interval-preview} for $\gl=1$. %\hstr[40]
    (a) $\gY\parlam(x,y)$.
    (b) $\frac12(\gY\parlam[\gl/4](2x,2y)+\gY\parlam[\gl/4](2x-1,2y-1))$. %\hstr[50]
    (c) $\frac14(\gY\parlam[\gl/16](4x,4y)+\gY\parlam[\gl/16](4x-1,4y-1) +  \gY\parlam[\gl/16](4x-2,4y-2) + \gY\parlam[\gl/16](4x-3,4y-3))$.}
    \label{fig:interval-resolvent}
  \end{figure}

  \begin{figure}%[h]
    \centering
    \scalebox{0.85}{\includegraphics{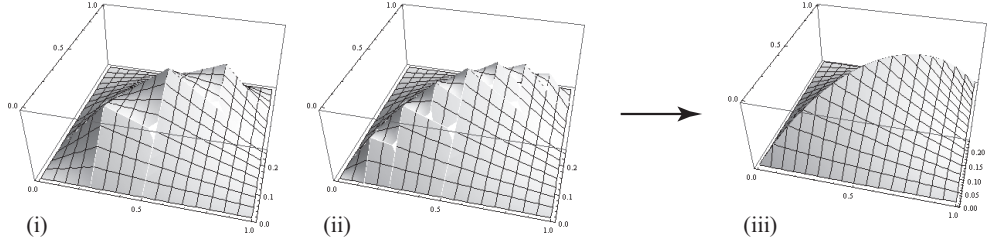}}
    \caption{\captionsize Mathematica plot of $G\parlam$ for $\gl=1$ and two of its partial sums.
    (i) The sum of (a) and (b) in Fig.~\ref{fig:interval-resolvent}.
    (ii) The sum of (a), (b), (c) in Fig.~\ref{fig:interval-resolvent}.
    (iii) The resolvent kernel $G\parlam(x,y)$ obtained in the limit.}
    \label{fig:interval-resolvent-stages}
  \end{figure}

\begin{proof}[Proof of Prop.~\ref{thm:resolvent-formula-interval-preview}]
  On the unit interval $I$, one has the resolvent kernel
  \linenopar
  \begin{equation}
    G\parlam(x,y) = \frac1{\sqrt\gl \sinh\sqrt\gl}
    \begin{cases}
      \sinh \sqrt{\gl}(1-y) \sinh \sqrt{\gl} x & x\leq y,  \\
      \sinh \sqrt{\gl}y \sinh \sqrt{\gl} (1-x), & x\geq y.
    \end{cases}
    \label{eqn:resolvent-interval-case x<y}
  \end{equation}
  %One can check by hand that \eqref{eqn:psi-interval-preview} solves \eqref{eqn:basic-resolvent-interval-solution}, and also
  For $x\leq \frac12 \leq y$, one has
  \linenopar
  \begin{align}
    G\parlam(x,y)
    &= \dfrac{\sinh \sqrt{\gl}(1-y) \sinh \sqrt{\gl} x}{\sqrt{\gl} \sinh{\sqrt{\gl}}}
      &&\text{by \eqref{eqn:resolvent-interval-case x<y}} \notag \\
    &= \frac{\sinh \frac{\sqrt{\gl}}{2}}{2\sqrt{\gl}\cosh \frac{\sqrt{\gl}}{2}} \cdot \frac{\sinh\sqrt{\gl}x \sinh \sqrt{\gl}(1-y)}{\sinh^2 \frac{\sqrt{\gl}}{2}}
      &&\sinh 2a = 2 \sinh a \cosh a \notag \\
    &= \frac{\sinh \frac{\sqrt{\gl}}{2}}{2\sqrt{\gl}\cosh \frac{\sqrt{\gl}}{2}} \gy\parlam(x) \gy\parlam(y)
      &&\text{by \eqref{eqn:psi-interval-preview}.}
      \label{eqn:resolvent-interval-diff-cells}
  \end{align}
  The same computation can be repeated for $y \leq \frac12 \leq x$ and hence \eqref{eqn:resolvent-interval-diff-cells} holds whenever $x$ and $y$ are in different 1-cells of $I$.

  It remains to consider the case when both $x$ and $y$ lie in the same 1-cell of $I$. Suppose that $x \leq y \leq \frac12$ and consider the difference
  \linenopar
  \begin{align}
    R(x,y)
    &:= G\parlam(x,y) - \frac{\sinh \frac{\sqrt{\gl}}{2}}{2\sqrt{\gl}\cosh \frac{\sqrt{\gl}}{2}} \gy\parlam(x)\gy\parlam(y) \notag \\
    &\,= \frac{\sinh \sqrt{\gl}x (\sinh\sqrt{\gl}(1-y)-\sinh\sqrt{\gl}y)} {\sqrt{\gl}\sinh\sqrt{\gl}}
      \label{eqn:resolvent-interval-calc} \\
    &\,= \frac{\sinh \sqrt{\gl}x\sinh{\sqrt{\gl}(\frac12-y)}}{\sqrt{\gl}\sinh\frac{\sqrt{\gl}}{2}} \notag \\
    &\,= \tfrac12 G\parlam[\gl/4](2x,2y),
      \label{eqn:resolvent-interval-x,y<1/2}
  \end{align}
  where \eqref{eqn:resolvent-interval-calc} follows by \eqref{eqn:resolvent-interval-case x<y} and the identity $\sinh(1-a)-\sinh a = 2 \sinh (\tfrac12-a) \cosh \tfrac12$.
  In the case when $y \leq x \leq \frac12$, one also obtains $R(x,y) = \tfrac12 G\parlam[\gl/4](2x,2y)$.
  On the other hand, when $x$ and $y$ are both in the other 1-cell, one obtains (by analogous computations) that $R(x,y) = \tfrac12 G\parlam[\gl/4](2x-1,2y-1)$. Note that if \gl is not a Dirichlet eigenvalue of \Lap, then neither is $\gl/4^m$ for any $m=0,1,2,\dots$.
  Consequently, if we define $\gY\parlam(x,y)$ as in \eqref{eqn:Psi-interval-preview}, then formula \eqref{eqn:resolvent-formula-interval-preview} for $G\parlam(x,y)$ follows.
\end{proof}

\begin{remark}\label{rem:Dirichlet-coefficient-structure}
  It is interesting to note that the coefficient which appears in \eqref{eqn:resolvent-interval-diff-cells} is
  \linenopar
  \begin{align*}
    \frac{\sinh \frac{\sqrt{\gl}}{2}}{2\sqrt{\gl}\cosh \frac{\sqrt{\gl}}{2}}
    = \frac{1}{{\gy\parlam}'(\frac12-) -{\gy\parlam}'(\frac12+) }.
  \end{align*}
  Formally, this indicates $(\gl\id-\Lap) G\parlam(x,y) = \gd(x-y)$; compare to \cite[(2.6.3)]{Str06}.
  Also, observe that
  \linenopar
  \begin{equation*}
    G\parlam(x,\tfrac12)
    = \frac{\sinh \frac{\sqrt\gl}2}{2\sqrt\gl \cosh \frac{\sqrt\gl}2} \gy\parlam(x) \gy\parlam(\tfrac12).
  \end{equation*}
\end{remark}
At each successive iteration of \eqref{eqn:resolvent-interval-x,y<1/2}, one is essentially ``correcting'' the formula on the diagonal for the $m$-cell with rescaled copies of the formula for the $(m+1)$-cell; Figures~\ref{fig:interval-resolvent} and \ref{fig:interval-resolvent-stages} are intended to explain this. In the next section, we follow this strategy for the construction of the resolvent kernel in the general case.

\begin{remark}\label{rem:Neumann-kernel}
  The procedure in the proof of Proposition~\ref{thm:resolvent-formula-interval-preview} may also be carried out for the Neumann case: define a function $\gf\parlam$ to be the solution of
  \linenopar
  \begin{align*}
    \begin{cases}
      (\gl \id -\Lap) \gf\parlam = 0, &\text{on } [0,\frac12] \text{ and } [\frac12,1] \\
      \tfrac{d}{dx}{\gf\parlam}(x) = 0, &x=0,1 \\
      \gf\parlam(\frac12)=1, \vstr
    \end{cases}
  \end{align*}
  which is given by
  \linenopar
  \begin{equation*}
    \gf\parlam (x)
    = \frac{1}{\cosh \frac{\sqrt{\gl}}{2}}
      \begin{cases}
        \cosh\sqrt{\gl}x,      & x\leq \frac12,\\
        \cosh\sqrt{\gl}(1-x),  & x\geq \frac12.
      \end{cases}
  \end{equation*}

  Observe that in parallel to Remark~\ref{rem:Dirichlet-coefficient-structure}, one again has
  \linenopar
  \begin{equation*}
    G\parlam_N\left(x,\tfrac12\right)
    = \frac{\cosh \frac{\sqrt{\gl}}{2}}{2\sqrt{\gl}\sinh \frac{\sqrt{\gl}}{2}} \mspace{3mu} \gf\parlam\negsp[2] (x) \mspace{3mu} \gf\parlam \negsp[5] \left(\tfrac12\right)
  \end{equation*}
  and
  \linenopar
  \begin{equation*}
    \frac{\cosh \frac{\sqrt{\gl}}{2}}{2\sqrt{\gl}\sinh \frac{\sqrt{\gl}}{2}}
    = \frac{1}{\tfrac{d}{dx}\gf\parlam(\frac12-) -{\tfrac{d}{dx}\gf\parlam}(\frac12+) }\,.
  \end{equation*}

  By analogous computations, if we define
  \linenopar
  \begin{equation*}
    \gF_N\parlam(x,y)
    = \frac{\cosh \frac{\sqrt{\gl}}{2}}{2\sqrt{\gl}\sinh \frac{\sqrt{\gl}}{2}} \mspace{3mu} \gf\parlam\negsp[2] (x) \mspace{3mu} \gf\parlam\negsp[3](y),
  \end{equation*}
  then we obtain the Neumann resolvent kernel
  \linenopar
  \begin{equation*}
    G\parlam_N(x,y)
    = \sum_{m=0}^\iy \sum_{|\word|=m} \frac{1}{2^m} \gF\parlam[\gl/4^m](F_\word^{-1}x,F_\word^{-1}y ).
  \end{equation*}
\end{remark}

\section[The Dirichlet resolvent kernel for pcf fractals]{The Dirichlet resolvent kernel for p.c.f. self-similar fractals}
\label{sec:Dirichlet-resolvent-kernel}

%\marginpar{This section reorganized on 8-8-08 with new intros. I wanted to separate out the $B\parlam$ stuff in order to clarify the exposition. Is the outline of \S\ref{sec:Construction-of-the-resolvent-kernel} sufficiently accurate?}
In this section, we proceed through a sequence of lemmas which will allow us to prove Theorem~\ref{thm:resolvent-formula-preview}, which is stated in full in Theorem~\ref{thm:Dirichlet-resolvent-formula}. On a first reading, the reader may wish to read Theorem~\ref{thm:Dirichlet-resolvent-formula} first, and then work through the lemmas in reverse order. We take one hypothesis of Theorem~\ref{thm:resolvent-formula-preview} as a blanket assumption throughout this section:

\begin{assumption}\label{hyp:lambda-not-a-Dirichlet-eiv}
 None of the numbers $\gl_\word=\gm_\word r_\word\gl$, for $\word \in W_{\ast}$, is a Dirichlet eigenvalue of the Laplacian.
\end{assumption}

We construct the resolvent kernel formula according to the following rough outline:
\begin{enumerate}[(1)]
  \item We build a solution \fundeif{p} to the eigenfunction equation which takes the value 1 at one boundary point of $X$ and is 0 on the other boundary points.
  \item We show how $\gy\parlaq{p}$ may be written in terms of rescaled copies of \fundeif{p}, i.e., we decompose the solution around a point $p \in V_1 \less V_0$ into solutions for each cell containing $p$.
  \item We use this construction to obtain a solution on the cells of level $m$.
  \item We show how the $(m+1)$-level solution contains Dirac masses on $V_m$ which cancel with the Dirac masses of the $m$-level solution, so that the sum over $m$ is telescoping and yields a global solution.
\end{enumerate}
The first two steps are carried out in \S\ref{sec:The-basic-building-blocks-of-the-resolvent-kernel}. In \S\ref{sec:The-matrix-B}, we collect some properties of $B\parlaq{pq} := \sum_{K_j \ni q} \dn[K_j] \gy\parlaq{p}(q)$, as introduced in \eqref{eqn:basic-resolvent-solution-preview}\footnote{Recall that $\gy\parlaq{p}$ is the solution to the resolvent equation on level 1 as defined in \eqref{eqn:Bobmatrix-preview}.}. For each $\gl$, we think of $B\parlaq{pq}$ as the entries of a matrix in $p$ and $q$. Under Assumption~\ref{hyp:lambda-not-a-Dirichlet-eiv}, we show $B\parlam$ is symmetric, invertible, and that $\lim_{\gl \to 0} B\parlam = B\parlam[0]$.
Finally, the remaining two steps are carried out in \S\ref{sec:Construction-of-the-resolvent-kernel}.

Throughout this section, we will need to analyze the properties of a continuous function that satisfies the $\gl$-eigenfunction equation on all $1$-cells, but whose the Laplacian may fail to be in $L^2$.  Our motivation is that the Laplacian of such a function has Dirac masses at points $p \in V_1 \setminus V_0$ with coefficients that can be computed from the normal derivatives.  The following result is standard; see \cite[\S2.5]{Str06}, for example.

\begin{prop}\label{thm:Dirac-masses-for-laplacian}
  If $u$ is continuous and $\Lap u= v_j$ on each $1$-cell $K_j = F_j(X)$, then $\Lap u$ exists as a measure, as in \eqref{eqn:def:measure-Laplacian}, and
  \linenopar
  \begin{equation}\label{eqn:Dirac-masses-for-laplacian}
    \Lap u(x) = \sum_{j=1}^J v_j \charfn{K_j}(x)
      - \sum_{q \in V_1 \setminus V_0} \gd_{q}(x) \sum_{K_j \ni q} \dn[K_j] u(q)
  \end{equation}
  where $\gd_{q}(x)$ is a Dirac mass and $\dn[K_j] u(q)$ is the normal derivative of $u$ at $q$ with respect to the cell $K_j$, and the sum is expressed in the notation of Remark~\ref{rem:sum-over-sets-notation}.
\end{prop}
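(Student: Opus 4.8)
The plan is to verify the asserted measure formula \eqref{eqn:Dirac-masses-for-laplacian} directly against the weak definition \eqref{eqn:def:measure-Laplacian} of the measure-valued Laplacian, by integrating against an arbitrary test function $v \in \dom_0 \energy$ and matching the energy pairing $\energy(u,v)$ to $-\int_X v \, d(\Lap u)$ for the candidate measure on the right-hand side. The first step is to note that the energy form is self-similar, so that
\linenopar
\begin{equation*}
  \energy(u,v) = \sum_{j=1}^J r_j^{-1} \energy(u \comp F_j, v \comp F_j),
\end{equation*}
and since $u \comp F_j \in \dom \Lap$ with $\Lap(u \comp F_j) = r_j \gm_j\, v_j \comp F_j$ by the scaling identity \eqref{eqn:Laplacian-scaling}, we may apply the (already-established, $L^2$) Gauss--Green formula on each cell $K_j$. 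Concretely, on $K_j$ one has
\linenopar
\begin{equation*}
  \energy(u \comp F_j, v \comp F_j) = -\int_X (\Lap(u\comp F_j))\, (v \comp F_j)\, d\gm + \sum_{q \in V_0} \dn(u \comp F_j)(q)\, (v \comp F_j)(q),
\end{equation*}
the boundary term being the standard Gauss--Green identity for the level-0 Laplacian on $X$ (see \cite[\S2.5]{Str06}).

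Next I would reassemble. Dividing the cellwise identity by $r_j$ and summing over $j$, the volume terms combine via \eqref{eqn:def:measure} to give $-\sum_j \int_{K_j} v_j v\, d\gm = -\int_X v \sum_j v_j \charfn{K_j}\, d\gm$, after changing variables $x \mapsto F_j^{-1}x$ and using $r_j^{-1} \cdot r_j \gm_j = \gm_j$. The boundary terms become $\sum_{j} r_j^{-1} \sum_{q \in V_0} \dn(u\comp F_j)(q)\, v(F_j q)$; by the definition \eqref{eqn:def:normal-derivative} of the normal derivative with respect to a cell, $r_j^{-1}\dn(u \comp F_j)(q) = \dn[K_j] u(F_j q)$, so this is $\sum_j \sum_{q \in V_0} \dn[K_j] u(F_j q)\, v(F_j q)$. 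Reindexing the double sum over pairs $(j,q)$ by the point $p = F_j q \in V_1$, and recalling that $v$ vanishes on $V_0$ so only $p \in V_1 \setminus V_0$ contribute, this collapses to $\sum_{p \in V_1 \setminus V_0} v(p) \sum_{K_j \ni p} \dn[K_j] u(p)$. Thus
\linenopar
\begin{equation*}
  \energy(u,v) = -\int_X v \Bigl(\sum_{j=1}^J v_j \charfn{K_j}\Bigr) d\gm + \sum_{p \in V_1\setminus V_0} v(p)\sum_{K_j \ni p} \dn[K_j] u(p),
\end{equation*}
which is exactly $-\int_X v\, d\nu$ for $\nu$ the signed measure on the right of \eqref{eqn:Dirac-masses-for-laplacian}; since $v$ was an arbitrary element of $\dom_0 \energy$, this shows $u \in \dom_{\sM}\Lap$ with $\Lap u = \nu$. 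One should also check $\nu$ is finite with no atoms beyond the listed Dirac masses: finiteness follows from boundedness of $v_j$ (each $v_j \in C(X) \subset L^1$) and finiteness of $V_1$, and the only atoms are the explicit ones at $V_1 \setminus V_0$, as required by \eqref{eqn:def:measure-Laplacian}.

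The main obstacle is bookkeeping rather than conceptual: making sure the change of variables correctly tracks the interaction of the three independent families of constants $r_j$, $\gm_j$, $Lip(F_j)$ (cf. Remark~\ref{rem:r_j-vs-Lip(F_j)-vs-mu_j}), and verifying that the cellwise Gauss--Green formula really does hold with the normal-derivative boundary term in the form \eqref{eqn:def:normal-derivative-level0} — i.e., that the limit defining $\dn$ exists for functions satisfying only the $\gl$-eigenfunction equation cellwise. The latter point is where "standard" is doing real work: one needs that a function which is a cellwise eigenfunction is harmonic-like enough near each $q \in V_0$ that the renormalized difference $r_i^{-m}\sum_{y \nbr[m] q}(u(q) - u(y))$ converges; this follows because on a single cell the eigenfunction equation can be solved and its normal derivative extracted exactly as for harmonic functions, the eigenvalue contributing only a convergent correction. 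Given that, the reassembly above is routine, and I would present it compactly, citing \cite[\S2.5]{Str06} for the single-cell Gauss--Green identity and the existence of normal derivatives.
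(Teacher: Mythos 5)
The paper does not actually prove this proposition (it is cited as standard from \cite[\S2.5]{Str06}), and your argument --- decomposing $\energy(u,v)$ by self-similarity, applying the cellwise Gauss--Green identity to $u\circ F_j \in \dom\Lap$, rescaling the boundary terms via \eqref{eqn:def:normal-derivative}, and reindexing over $V_1\setminus V_0$ using $v|_{V_0}=0$ to match the weak definition \eqref{eqn:def:measure-Laplacian} --- is precisely that standard proof, so it is correct and in line with the paper's intent. The only blemish is in your closing paragraph: the existence of the normal derivatives $\dn[K_j]u(q)$ needs no separate eigenfunction argument, since the hypothesis is a general continuous $v_j$ (not a cellwise $\gl$-eigenfunction) and the limit defining $\dn$ exists for any function in $\dom\Lap$ on a cell, which is already part of the Gauss--Green theorem you invoke.
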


\subsection{The basic building blocks of the resolvent kernel}
\label{sec:The-basic-building-blocks-of-the-resolvent-kernel}

\begin{lemma}\label{thm:existence-of-fund-eif}
  For any $\gl$ that is not a Dirichlet eigenvalue of the Laplacian, and for each $p \in V_0$, there is a function $\fundeif{p}(x) \in \dom_{\sM} \Lap$, as in \eqref{eqn:def:measure-Laplacian}, which solves
  \linenopar
  \begin{equation}\label{eqn:fund-eif-problem}
    \begin{cases}
      (\gl\id - \Lap) \fundeif{p}(x) = 0, &\text{on } X, \\
      \fundeif{p}(q) = \gd_{pq}, & \forall q \in V_0,
    \end{cases}
  \end{equation}
  where $\gd_{pq}$ is the Kronecker delta.  Moreover, if $\gz_p$ is the harmonic function on $X$ with $\gz_p(q)=\gd_{pq}$, then
  \linenopar
  \begin{align}
    \fundeif{p} &= \gz_p - \gl \gq\parlaq{p}
      &&\text{on all of $X$, and} \label{eqn:fund-eif-problem-prop1} \\
    \dn \fundeif{p}(q)
    &= \dn \gz_p(q) - \gl \gk\parlaq{pq}
      &&\text{for } q \in V_0,
      \label{eqn:fund-eif-problem-prop2}
  \end{align}
  where $\gq\parlaq{p}$ and $\gk\parlaq{pq}$ are meromorphic functions of $\gl$ with poles at the Dirichlet eigenvalues of the Laplacian, and $\gk\parlaq{pq} = \gk\parlaq{qp}$.
\end{lemma}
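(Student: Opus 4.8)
The plan is to subtract from $\fundeif{p}$ the harmonic function carrying its boundary data, solve the resulting inhomogeneous resolvent equation with Dirichlet boundary conditions, and then read off the stated properties from the Gauss--Green formula. Concretely, let $\gz_p$ be the (unique) harmonic function on $X$ with $\gz_p(q)=\gd_{pq}$ for $q\in V_0$ (so $\Lap\gz_p=0$). The Dirichlet Laplacian on a p.c.f. self-similar fractal has compact resolvent, hence discrete spectrum (see \cite{Kig01}), so the hypothesis that $\gl$ is not a Dirichlet eigenvalue means that $\gl\id-\Lap$ is boundedly invertible from the Dirichlet domain $\{u\in\dom_{L^2}\Lap\suth u|_{V_0}=0\}$ onto $L^2(d\gm)$. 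I would set $\gq\parlaq{p}:=(\gl\id-\Lap)^{-1}\gz_p$ (with the resolvent of the Dirichlet Laplacian) and $\fundeif{p}:=\gz_p-\gl\gq\parlaq{p}$. Using $\Lap\gz_p=0$ one checks directly that
\linenopar
\begin{equation*}
  (\gl\id-\Lap)\fundeif{p} = -\gl\bigl((\gl\id-\Lap)\gq\parlaq{p}-\gz_p\bigr) = 0 \qquad\text{on }X,
\end{equation*}
while $\fundeif{p}(q)=\gz_p(q)-\gl\gq\parlaq{p}(q)=\gd_{pq}$ for $q\in V_0$ because $\gq\parlaq{p}$ vanishes on $V_0$. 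Since finite-energy functions are continuous, $\Lap\gq\parlaq{p}=\gl\gq\parlaq{p}-\gz_p$ is continuous, so $\gq\parlaq{p},\fundeif{p}\in\dom\Lap\ci\dom_{\sM}\Lap$ and $\Lap\fundeif{p}=\gl\fundeif{p}$. This gives \eqref{eqn:fund-eif-problem} and \eqref{eqn:fund-eif-problem-prop1}, and \eqref{eqn:fund-eif-problem-prop2} follows from the linearity of $\dn$ upon setting $\gk\parlaq{pq}:=\dn\gq\parlaq{p}(q)$.

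For the meromorphic dependence on $\gl$, I would appeal to the spectral theorem: as the Dirichlet Laplacian has compact resolvent, $\gl\mapsto(\gl\id-\Lap)^{-1}$ is meromorphic as an operator-valued function whose poles lie among the Dirichlet eigenvalues, so for fixed $g\in L^2(d\gm)$ the map $\gl\mapsto(\gl\id-\Lap)^{-1}g$ is a meromorphic $\dom\Lap$-valued function. Hence $\gq\parlaq{p}$ is a meromorphic function of $\gl$ with poles among the Dirichlet eigenvalues, and since $\dom\Lap$ embeds continuously in $C(X)$, so is $\gl\mapsto\gq\parlaq{p}(x)$ for each $x$. Meromorphicity of $\gk\parlaq{pq}$ will then fall out of the identity derived in the next step.

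For the symmetry $\gk\parlaq{pq}=\gk\parlaq{qp}$, I would use the Gauss--Green formula $\energy(u,v)=-\int_X(\Lap u)\,v\,d\gm+\sum_{s\in V_0}v(s)\,\dn u(s)$ (cf. \cite[\S2.5]{Str06}), the symmetry of the bilinear form $\energy$, and $\Lap\gq\parlaq{p}=\gl\gq\parlaq{p}-\gz_p$. Pairing $\gq\parlaq{p}$ with $\gq\parlaq{q}$ (both vanish on $V_0$, so the boundary sum drops) and comparing with the reversed pairing yields $\int_X\gz_p\,\gq\parlaq{q}\,d\gm=\int_X\gz_q\,\gq\parlaq{p}\,d\gm$. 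Pairing $\gq\parlaq{p}$ with $\gz_q$, and using that the reversed pairing of $\gz_q$ with $\gq\parlaq{p}$ vanishes (since $\Lap\gz_q=0$ and $\gq\parlaq{p}|_{V_0}=0$), gives
\linenopar
\begin{equation*}
  \gk\parlaq{pq}=\dn\gq\parlaq{p}(q) = \gl\int_X\gq\parlaq{p}\,\gz_q\,d\gm - \int_X\gz_p\,\gz_q\,d\gm .
\end{equation*}
The right-hand side is symmetric in $p$ and $q$ by the first identity together with the evident symmetry of $\int_X\gz_p\gz_q\,d\gm$, which gives $\gk\parlaq{pq}=\gk\parlaq{qp}$; it also exhibits $\gk\parlaq{pq}$ as a meromorphic function of $\gl$ with poles among the Dirichlet eigenvalues, completing the argument.

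I expect the main obstacle to be a clean treatment of the meromorphic dependence: it rests on the standard but nontrivial fact that the Dirichlet Laplacian on a p.c.f. fractal has compact resolvent, together with the passage from operator-valued meromorphy to meromorphy of $\gq\parlaq{p}(x)$ and of $\gk\parlaq{pq}$, the latter handled via the second displayed identity. The existence statement is then immediate from the invertibility hypothesis, and the symmetry is a routine Gauss--Green computation which, because $\energy$ is symmetric as a real bilinear form, needs no self-adjointness and so applies verbatim for complex $\gl$.
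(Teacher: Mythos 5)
Your argument is correct, and its core is the same as the paper's: setting $\gq\parlaq{p}=(\gl\id-\Lap)^{-1}\gz_p$ (Dirichlet resolvent) and $\fundeif{p}=\gz_p-\gl\gq\parlaq{p}$ is exactly the paper's construction, which merely realizes the same resolvent concretely as the eigenfunction series $\gq\parlaq{p}=\sum_n a_p(n)(\gl-\gl_n)^{-1}f_n$ with $\gz_p=\sum_n a_p(n)f_n$. Where you genuinely diverge is in the auxiliary analytic facts. For meromorphy you invoke compactness of the Dirichlet resolvent and abstract spectral theory; the paper proves it by hand, expanding $\gk\parlaq{pq}=\sum_n \gl_n a_p(n)a_q(n)/(\gl-\gl_n)$ about a point $z$ off the spectrum and using $\{a_p(n)\}\in\ell^2$ together with the absence of finite accumulation points of the spectrum -- your route is shorter but leans on the citable compact-resolvent fact, while the paper's is self-contained given discreteness of the spectrum. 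For \eqref{eqn:fund-eif-problem-prop2} and the symmetry, you take $\gk\parlaq{pq}:=\dn\gq\parlaq{p}(q)$ (so \eqref{eqn:fund-eif-problem-prop2} is just linearity of $\dn$, granted that normal derivatives exist for functions in the $L^2$ domain) and obtain by two Gauss--Green pairings the identity $\gk\parlaq{pq}=\gl\int_X\gq\parlaq{p}\gz_q\,d\gm-\int_X\gz_p\gz_q\,d\gm$, whereas the paper runs Gauss--Green on the functions $\fundeif{p}$ themselves and lands directly on the explicit series, from which symmetry is immediate; the two are equivalent, since expanding your integrals in the eigenbasis reproduces the series, and your integral formula is equally good for the later continuity statement $B\parlam\to B\parlam[0]$ in Lemma~\ref{thm:Bpq-is-symmetric-and-contin-as-gl-to-0} because $\gq\parlaq{p}$ is analytic at $\gl=0$. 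One bookkeeping point: the paper's proof records the intermediate identity \eqref{eqn:fundeif-normderivs-equal}, $\dn\fundeif{p}(q)=\dn\fundeif{q}(p)$ for $p,q\in V_0$, which is cited later (Lemmas~\ref{thm:Bpq-is-symmetric-and-contin-as-gl-to-0} and~\ref{thm:the-Luke-matrix}); in your setup it follows at once from your symmetry of $\gk\parlaq{pq}$ together with $\dn\gz_p(q)=\dn\gz_q(p)$ (Gauss--Green for harmonic functions), so you should state it explicitly rather than leave it implicit.
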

  \begin{proof}
    Let $\{f_n\}$ denote the Dirichlet eigenfunctions of the Laplacian, with the corresponding eigenvalues $\gl_n$ arranged so that $\gl_{n+1}\geq\gl_n$; equality occurs iff $\gl_n$ has multiplicity greater than one. The functions $f_n$ may be assumed orthonormal, and their span is dense in $L^2$.  Consequently we may write $\gz_p = \sum_n a_p(n) f_n$. The function
    \linenopar
    \begin{equation*}
        \gq\parlaq{p}= \sum_n \frac{a_p(n)}{\gl-\gl_n} f_n
        \end{equation*}
    then satisfies
    \linenopar
    \begin{equation*}
        (\gl\id-\Lap) \gq\parlaq{p}
        = \sum_n \frac{a_p(n)}{\gl-\gl_n} (\gl-\gl_n) f_n
        =\sum_n a_p(n)f_n
        = \gz_p
        \end{equation*}
    in the $L^2$ sense.  We also see that $\Lap \gq\parlaq{p}= \sum_n \gl_n
    a_p(n) f_n/(\gl-\gl_n)$
    %\marginpar{Is this clear?}
    is $L^2$ convergent, so $\gq\parlaq{p}\in \dom_{L^2}\Lap$, as in \eqref{def:Laplacian}. In particular, $\gq\parlaq{p}$ is continuous and equal to zero on $V_0$.

    Define $\fundeif{p} := \gz_p - \gl\gq\parlaq{p}$. Then $(\gl\id-\Lap) \fundeif{p} = (\gl\id-\Lap) \gz_p - \gl \gz_p = 0$, and for $q \in V_0$,
    \linenopar
    \begin{equation}\label{eqn:fundeif=zeta=Dirac}
      \fundeif{p}(q)=\gz_p(q)=\gd_{pq}.
    \end{equation}

    To verify \eqref{eqn:fund-eif-problem-prop2}, we will need the fact that
    \begin{equation}\label{eqn:fundeif-normderivs-equal}
      \dn \fundeif{p}(q) = \dn\fundeif{q}(p),
    \end{equation}
    which follows by computing the normal derivatives as follows:
    \linenopar
    \begin{align*}
      \dn \fundeif{q}(p)- \dn \fundeif{p}(q)
      &= \sum_{s \in V_0} \left( \fundeif{p}(s) \dn \fundeif{q}(s)
        - \fundeif{q}(s) \dn \fundeif{p}(s) \right)
        &&\text{by \eqref{eqn:fundeif=zeta=Dirac}} \\
      &= \int_{X} \left( \fundeif{p}(x)\Lap\fundeif{q}(x)
        - \fundeif{q}(x)\Lap\fundeif{p}(x) \right)\, d\gm(x)
        &&\text{Gauss-Green} \\
      &= 0,\vstr[2.5] &&\Lap \fundeif{s} = \gl \fundeif{s}.
    \end{align*}
    Now \eqref{eqn:fund-eif-problem-prop2} follows via
    \linenopar
    \begin{align*}
      \dn \gz_p(q) - \dn \fundeif{p}(q)
      &= \dn \gz_p(q) - \dn \fundeif{q}(p)
        &&\text{by \eqref{eqn:fundeif-normderivs-equal}} \\
      &= \sum_{s\in V_0} \left( \fundeif{q}(s) \dn \gz_p(s)
        -  \gz_p(s) \dn \fundeif{q}(s)\right) \vstr[4]
        &&\text{by \eqref{eqn:fundeif=zeta=Dirac}} \\
      &=\int_X \left( \vstr[2.2] \gh\parlaq{q}(x)\Lap\gz_p(x)
        -  \gz_p(x)\Lap\fundeif{q}(x)\right)\, d\gm(x)
        &&\text{Gauss-Green} \\
      &= - \gl \int_X \gz_p(x)\fundeif{q}(x) \, d\gm(x)
        && \Lap\gz_p=0 \\
      &= \gl \int_X \left(\gl \gz_p(x) \theta\parlaq{q}(x)
        - \gz_p(x)\gz_q(x) \right)\,d\gm(x)
        &&\text{by \eqref{eqn:fund-eif-problem-prop1}} \\
      &= \gl^{2} \sum_n \frac{a_p(n) a_q(n)}{\gl-\gl_n}
        - \gl \sum_n a_p(n) a_q(n)\\
      &= \gl \sum_n \frac{\gl_n a_p(n) a_q(n)}{\gl-\gl_n}.
    \end{align*}
    Define for each $p,q \in V_0$ the functions
    \linenopar
    \begin{equation}\label{eqn:def-of-kappa}
      \gk\parlaq{pq} := \sum_n \frac{\gl_n a_p(n) a_q(n)}{\gl-\gl_n}
    \end{equation}
    so that $\dn \gz_q(p) - \dn \fundeif{q}(p) = \gl\gk\parlaq{pq}$.
    It is evident  that $\gk\parlaq{pq}$ is symmetric in $p$ and $q$.  It is also meromorphic in $\gl$ with poles at the points $\gl_n$, as may be verified by writing the expansion on the disc of radius $r$ centered at $z$ (where $z \neq \gl_n$ for any $n$, and $r = \inf_n|z-\gl_n|/2$) as follows:
    \linenopar
    \begin{equation*}
      \gk\parlaq{pq}
      = \sum_n \frac{\gl_n a_p(n) a_q(n)}{z-\gl_n}
        \sum_{k=0}^{\iy} \left(\frac{z-\gl}{z-\gl_n} \right)^k
      = - \sum_{k=0}^{\iy} (\gl-z)^k
        \sum_n \frac{\gl_n a_p(n) a_q(n)}{(\gl_n-z)^{k+1}}
    \end{equation*}
    and using the fact that $\{a_p(n)\}$ and $\{a_q(n)\}$ are in $\ell^{2}$ hence their product is in $\ell^{1}$, while $\gl_n/(\gl_n-z)^{k+1}$ is bounded for each $k$. Note that $r>0$ because the eigenvalues of a p.c.f. fractal have no finite accumulation point; cf. \cite[\S4.1]{Kig01}. An almost identical argument shows that $\gq\parlaq{p}$ is meromorphic in $\gl$ with values in $\dom_{L^2}\Lap$, so the proof is complete.
  \end{proof}

\begin{cor}\label{thm:relation-of-fundeif-to-psi}
  Let $p \in V_1 \less V_0$. If $r_j \gm_j \gl$ is not a Dirichlet eigenvalue for any $j$ with $p \in K_j$, then \gy and \gh are related via
  \linenopar
  \begin{equation}
    \gy\parlaq{p}(x) =
    \begin{cases}
      \gh\parlaq[r_j \gm_j \gl]{F_j^{-1} p} (F_j^{-1}x)
        &\text{if } p,x \in K_j, \\
      0 &\text{otherwise.}
    \end{cases}
    \label{eqn:relation-of-fundeif-to-psi}
  \end{equation}
\end{cor}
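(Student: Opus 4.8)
The plan is to verify that the right-hand side of \eqref{eqn:relation-of-fundeif-to-psi} satisfies the defining conditions \eqref{eqn:basic-resolvent-solution-preview} of $\gy\parlaq{p}$, and then invoke uniqueness. Fix $p \in V_1 \less V_0$; by the p.c.f. condition, $p = F_j q$ for $q = F_j^{-1} p \in V_0$ and for each $j$ in the (possibly several) cells $K_j \ni p$. Define $w$ to be the function equal to $\gh\parlaq[r_j \gm_j \gl]{F_j^{-1} p}(F_j^{-1} x)$ on each $K_j$ containing $p$, and $0$ on every other $1$-cell. First I would check $w$ is well-defined and continuous: on a cell $K_j$ not containing $p$, the function $\gh\parlaq[r_j\gm_j\gl]{\cdot}$ does not enter, and we have simply declared $w \equiv 0$ there; on a cell $K_j$ containing $p$, Lemma~\ref{thm:existence-of-fund-eif} gives that $\gh\parlaq[r_j\gm_j\gl]{q}$ is continuous on $X$ with boundary values $\gd_{q q'}$ for $q' \in V_0$, so $F_j^{-1}$ carries this to a continuous function on $K_j$ vanishing on $\bdy K_j \less \{p\}$ and equal to $1$ at $p$. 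Since adjacent $1$-cells meet only along $V_1$ (p.c.f. condition) and all the pieces agree there — they are $1$ at $p$ and $0$ at the other junction points — $w$ is continuous on $X$. Its values on $V_1$ are exactly $\gd_{pq'}$ for $q' \in V_1$, matching the second line of \eqref{eqn:basic-resolvent-solution-preview}.

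Next I would check the eigenfunction equation on the interior of each $1$-cell. On a cell $K_j \not\ni p$, $w \equiv 0$ trivially solves $(\gl\id-\Lap)w = 0$. On a cell $K_j \ni p$, we use the scaling identity \eqref{eqn:Laplacian-scaling}: since $\gh\parlaq[r_j\gm_j\gl]{q}$ solves $(\mu_j r_j \gl)\id - \Lap$ on $X$ (by \eqref{eqn:fund-eif-problem}), the composition $\gh\parlaq[r_j\gm_j\gl]{q} \circ F_j^{-1}$ satisfies $\Lap(w|_{K_j})(F_j x) = r_j^{-1}\mu_j^{-1}\Lap(\gh\parlaq[r_j\gm_j\gl]{q})(x) = r_j^{-1}\mu_j^{-1}\cdot r_j\mu_j\gl\,\gh\parlaq[r_j\gm_j\gl]{q}(x) = \gl\, w(F_j x)$, i.e. $(\gl\id-\Lap)w = 0$ on the interior of $K_j$. (Here I am applying \eqref{eqn:Laplacian-scaling} in the form $\Lap(u\circ F_j) = r_j\mu_j (\Lap u)\circ F_j$ with $u = \gh\parlaq[r_j\gm_j\gl]{q}$; strictly one should note $\gh\parlaq[r_j\gm_j\gl]{q} \in \dom_{\sM}\Lap$, but on $X$ it is a genuine eigenfunction, hence in $\dom_{L^2}\Lap$, so no Dirac masses appear in the interior.) Thus $w$ satisfies both lines of \eqref{eqn:basic-resolvent-solution-preview}.

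Finally, uniqueness: Assumption~\ref{hyp:lambda-not-a-Dirichlet-eiv} (together with the corollary's hypothesis that $r_j\mu_j\gl$ is not a Dirichlet eigenvalue for the relevant $j$, and that $\gl$ itself is not one) guarantees that the boundary value problem \eqref{eqn:basic-resolvent-solution-preview} has a unique solution — a function solving $(\gl\id-\Lap)u=0$ on each $1$-cell with prescribed values on $V_1$ is determined by solving a Dirichlet-type problem on each $K_j$, which is uniquely solvable precisely when $r_j\mu_j\gl$ is not a Dirichlet eigenvalue of $\Lap$ on $X$ (equivalently, of $\Lap$ on $K_j$ after rescaling). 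Since both $\gy\parlaq{p}$ and $w$ solve \eqref{eqn:basic-resolvent-solution-preview}, they coincide, which is \eqref{eqn:relation-of-fundeif-to-psi}.

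I expect the main (minor) obstacle to be the bookkeeping around $\dom_{\sM}\Lap$ versus $\dom_{L^2}\Lap$ and making the scaling argument \eqref{eqn:Laplacian-scaling} fully rigorous for $\gh$ — i.e., confirming that on $X$ the function $\gh\parlaq{p}$ has no Dirac masses (it is a true eigenfunction there, by the construction $\gh\parlaq{p} = \gz_p - \gl\gq\parlaq{p}$ with $\gq\parlaq{p}\in\dom_{L^2}\Lap$ in Lemma~\ref{thm:existence-of-fund-eif}) so that the rescaled pieces only produce Dirac masses at $V_1\less V_0$, as intended for $\gy\parlaq{p}$. Everything else is a routine check of matching data across the p.c.f. gluing and an appeal to uniqueness of the eigenfunction boundary value problem under Assumption~\ref{hyp:lambda-not-a-Dirichlet-eiv}.
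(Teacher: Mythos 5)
Your proof is correct and follows essentially the same route as the paper: the paper's own argument also rescales via \eqref{eqn:Laplacian-scaling} (in the inverted form $\Lap(u\comp F_j^{-1})=(r_j\gm_j)^{-1}(\Lap u)\comp F_j^{-1}$) to see that $\gh\parlaq[r_j\gm_j\gl]{F_j^{-1}p}\comp F_j^{-1}$ solves the resolvent equation on $K_j$ with Kronecker-delta boundary data, and then identifies it with $\gy\parlaq{p}$ by the defining problem \eqref{eqn:basic-resolvent-solution-preview}. Your additional remarks on continuity across $V_1$ and on uniqueness under Assumption~\ref{hyp:lambda-not-a-Dirichlet-eiv} simply make explicit what the paper leaves implicit.
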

  \begin{proof}
    From \eqref{eqn:Laplacian-scaling} we have $\Lap (u \comp F_j^{-1}) = (r_j \gm_j)^{-1}(\Lap u) \comp F_j^{-1}$, for any $u$. Then from \eqref{eqn:basic-resolvent-solution-preview} and \eqref{eqn:fund-eif-problem}, one can observe that
    \linenopar
    \begin{align*}
      \begin{cases}
        (\gl\id-\Lap) \fundeif[r_j \gm_j \gl]{p} \comp F_j^{-1}=0, &\text{on } K_j = F_j(X) \\
        \fundeif[r_j \gm_j \gl]{p} \comp F_j^{-1}(q)=\gd_{F_j(p)q} &\forall q \in F_j(V_0).
      \end{cases}
        &\qedhere
    \end{align*}
  \end{proof}

\begin{remark}\label{rem:relation-of-fundeif-to-psi:compare-to-unit-interval}
  %\marginpar{Should Remark~\ref{rem:relation-of-fundeif-to-psi:compare-to-unit-interval} be kept, changed, removed?}
  It is helpful to compare \eqref{eqn:relation-of-fundeif-to-psi} to the discussion of the unit interval, where \eqref{eqn:resolvent-interval-x,y<1/2} may be rewritten as
  \linenopar
  \begin{align*}
    R(x,y) =
    \begin{cases}
      \tfrac12 G\parlam[\gl/4](2x,2y) &\text{if } x,y \in K_j, \\
      0 &\text{otherwise.}
    \end{cases}
  \end{align*}
\end{remark}

\subsection{The matrix $B\parlam$}
\label{sec:The-matrix-B}
In the construction of the resolvent kernel, the matrix $B\parlam$ plays the same role as the transition matrix for the discrete Laplacian on $V_1$ in the corresponding argument of Kigami for the construction of the Dirichlet Green's function. We now collect some important properties of $B\parlam$ for use below.

\begin{lemma}\label{thm:Bpq-is-symmetric-and-contin-as-gl-to-0}
  The matrix $B\parlam$ is symmetric for any \gl, and $\lim_{\gl \to 0} B\parlam = B\parlam[0]$.
  \begin{proof}
    From \eqref{eqn:fundeif-normderivs-equal} we have $\dn[K_j] \gy\parlaq{p}(q) = \dn[K_j] \gy\parlaq{q}(p)$, and thus $B\parlaq{pq} = B\parlaq{qp}$.
    Then from \eqref{eqn:relation-of-fundeif-to-psi}, if $j_1, \dots j_k$ are those $j$ for which $K_j$ contains both $p$ and $q$, then
    \linenopar
    \begin{align}
      B\parlaq{pq}
      &= \sum_{i=1}^{k} \dn[K_{j_i}]  \left( \gh\parlaq[r_{j_i}\gm_{j_i} \gl]{F_{j_i}^{-1}(p)} \comp F_{j_i}^{-1} \right) (q) \notag \\
      &= \sum_{i=1}^{k} r_{j_i}^{-1} \dn[K_{j_i}]  \gh\parlaq[r_{j_i}\gm_{j_i}\gl] {F_{j_i}^{-1}(p)} \left( F_{j_i}^{-1}(q) \right) \notag \\
      &= \sum_{i=1}^{k} r_{j_i}^{-1} \dn[K_{j_i}] \gz_{F_{j_i}^{-1}(p)} \left(
        F_{j_i}^{-1}(q) \right)
        + \sum_{i=1}^{k} r_{j_i}^{-1} r_{j_i} \gm_{j_i} \gl \, \gk\parlaq[r_{j_i} \gm_{j_i} \gl]{F_{j_i}^{-1}(p) F_{j_i}^{-1}(q)} \notag
        &&\text{by \eqref{eqn:fund-eif-problem-prop2}} \\
      &= B\parlaq[0]{pq} + \gl \sum_{i=1}^{k} \gm_{j_i}
        \gk\parlaq[r_{j_i}\gm_{j_i}\gl]{F_{j_i}^{-1}(p)F_{j_i}^{-1}(q)}
        \label{eqn:Bpq-as-perturbation-of-B0}
    \end{align}
    in which the final sum term is a meromorphic function of $\gl$ with poles at those $\gl$ for which $r_{j_i}\gm_{j_i}\gl$ is a Dirichlet eigenvalue.  We used the  observation that the harmonic case with functions $\gz$ is just the case $\gl=0$. From \eqref{eqn:Bpq-as-perturbation-of-B0} it is also clear that $B\parlaq{pq} \to B\parlaq[0]{pq}$ as $\gl \to 0$.
  \end{proof}
\end{lemma}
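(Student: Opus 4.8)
The plan is to derive both assertions directly from the scaling relation of Corollary~\ref{thm:relation-of-fundeif-to-psi} and the properties of the fundamental eigenfunctions proved in Lemma~\ref{thm:existence-of-fund-eif}; no new estimates are needed, only careful bookkeeping of the renormalization factors $r_j$ and $r_j\gm_j$. The first observation is that in the defining sum $B\parlaq{pq} = \sum_{K_j \ni q} \dn[K_j] \gy\parlaq{p}(q)$ every term with $p \notin K_j$ vanishes, since by \eqref{eqn:relation-of-fundeif-to-psi} the function $\gy\parlaq{p}$ is identically zero on such a cell, hence so is its normal derivative at $q$; thus the sum reduces to one over the (possibly empty) collection $K_{j_1},\dots,K_{j_k}$ of $1$-cells containing \emph{both} $p$ and $q$. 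For such cells one has $F_{j_i}^{-1}p, F_{j_i}^{-1}q \in V_0$, since $p,q \in V_1 \cap K_{j_i} = F_{j_i}(V_0)$ by the p.c.f.\ structure.

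For symmetry, \eqref{eqn:def:normal-derivative} applied at word length $1$, together with \eqref{eqn:relation-of-fundeif-to-psi}, gives for each such cell $\dn[K_{j_i}] \gy\parlaq{p}(q) = r_{j_i}^{-1}\dn \gh\parlaq[r_{j_i}\gm_{j_i}\gl]{F_{j_i}^{-1}p}(F_{j_i}^{-1}q)$. Since $r_{j_i}\gm_{j_i}\gl$ is not a Dirichlet eigenvalue (Assumption~\ref{hyp:lambda-not-a-Dirichlet-eiv}), the symmetry of normal derivatives \eqref{eqn:fundeif-normderivs-equal}, applied at the spectral parameter $r_{j_i}\gm_{j_i}\gl$ and at the boundary points $F_{j_i}^{-1}p, F_{j_i}^{-1}q$, shows this equals $r_{j_i}^{-1}\dn \gh\parlaq[r_{j_i}\gm_{j_i}\gl]{F_{j_i}^{-1}q}(F_{j_i}^{-1}p) = \dn[K_{j_i}] \gy\parlaq{q}(p)$. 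Summing over $i$ yields $B\parlaq{pq}=B\parlaq{qp}$.

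For the limit, substitute the decomposition \eqref{eqn:fund-eif-problem-prop2} of the normal derivative of the fundamental eigenfunction, again at the parameter $r_{j_i}\gm_{j_i}\gl$, into the reduced sum. Each term becomes a harmonic piece $r_{j_i}^{-1}\dn \gz_{F_{j_i}^{-1}p}(F_{j_i}^{-1}q)$ plus a correction in which the factor $r_{j_i}\gm_{j_i}\gl$ coming from \eqref{eqn:fund-eif-problem-prop2} cancels the outer $r_{j_i}^{-1}$, leaving $\gl\,\gm_{j_i}\,\gk\parlaq[r_{j_i}\gm_{j_i}\gl]{F_{j_i}^{-1}p\, F_{j_i}^{-1}q}$. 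Because $\gh\parlaq[0]{p} = \gz_p$ by \eqref{eqn:fund-eif-problem-prop1} and $\gy\parlaq[0]{p}$ is just the level-$1$ piecewise-harmonic spline with peak at $p$, the sum of the harmonic pieces is precisely $B\parlaq[0]{pq}$, so
\[
  B\parlaq{pq} = B\parlaq[0]{pq} + \gl \sum_{i=1}^{k} \gm_{j_i}\,\gk\parlaq[r_{j_i}\gm_{j_i}\gl]{F_{j_i}^{-1}p\, F_{j_i}^{-1}q}.
\]

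Finally, the Dirichlet eigenvalues of \Lap\ are strictly positive and have no finite accumulation point (\cite[\S4.1]{Kig01}), so by Lemma~\ref{thm:existence-of-fund-eif} each $\gk\parlaq[r_{j_i}\gm_{j_i}\gl]{\cdot\,}$ is holomorphic, hence bounded, in a neighbourhood of $\gl=0$; thus the correction term is $O(\gl)$ and $\lim_{\gl\to 0}B\parlam = B\parlam[0]$. The only point requiring any care in the whole argument is tracking the factors $r_{j_i}^{-1}$ against $r_{j_i}\gm_{j_i}$ through the scaling identities \eqref{eqn:def:normal-derivative} and \eqref{eqn:Laplacian-scaling}, and observing that $0$ is not a Dirichlet eigenvalue, so that the correction term in the display is analytic — not merely meromorphic — at the origin.
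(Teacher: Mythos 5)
Your proof is correct and follows essentially the same route as the paper: reduce the sum to the cells $K_{j_1},\dots,K_{j_k}$ containing both $p$ and $q$ via \eqref{eqn:relation-of-fundeif-to-psi}, get symmetry from \eqref{eqn:fundeif-normderivs-equal} at the rescaled parameter, and substitute \eqref{eqn:fund-eif-problem-prop2} to obtain exactly the perturbation formula \eqref{eqn:Bpq-as-perturbation-of-B0}, from which the limit $\gl\to0$ follows. The only difference is that you spell out details the paper leaves implicit (vanishing of terms with $p\notin K_j$, the bookkeeping of $r_{j_i}^{-1}$ against $r_{j_i}\gm_{j_i}$, and analyticity of $\gk\parlaq[r_{j_i}\gm_{j_i}\gl]{\cdot}$ near $\gl=0$ since $0$ is not a Dirichlet eigenvalue), which is a welcome sharpening of the paper's ``it is also clear'' step.
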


As noted in the discussion following the statement of Theorem~\ref{thm:resolvent-formula-preview}, it is important that the action of $B\parlam$ on the subspace $V_1 \setminus V_0$ is invertible.

\begin{lemma}\label{thm:B-is-invertible}
  If $\gl$ is not a Dirichlet eigenvalue then $B\parlam$ is invertible.
\end{lemma}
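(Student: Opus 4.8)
The plan is to show that the kernel of $B\parlam$ (acting on functions on $V_1\setminus V_0$) is trivial, arguing by contradiction: a nontrivial null vector would produce a Dirichlet eigenfunction of $\Lap$ with eigenvalue $\gl$, contradicting the hypothesis. First I would fix a vector $(a_p)_{p\in V_1\setminus V_0}$ with $\sum_p B\parlaq{pq} a_p = 0$ for all $q\in V_1\setminus V_0$, and form the candidate function $u := \sum_{p\in V_1\setminus V_0} a_p \gy\parlaq{p}$ on $X$. Since each $\gy\parlaq{p}$ vanishes on $V_0$ by \eqref{eqn:basic-resolvent-solution-preview}, so does $u$; thus $u$ satisfies the Dirichlet boundary condition. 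If I can show $u$ is a nonzero element of $\dom_{L^2}\Lap$ (equivalently, that the putative Dirac masses cancel) with $\Lap u = \gl u$, then $\gl$ is a Dirichlet eigenvalue, a contradiction.

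The key computation is the Laplacian of $u$. Each $\gy\parlaq{p}$ is continuous and satisfies $(\gl\id-\Lap)\gy\parlaq{p}=0$ on the interior of every $1$-cell, so by Proposition~\ref{thm:Dirac-masses-for-laplacian} its distributional Laplacian is $\Lap\gy\parlaq{p} = \gl\gy\parlaq{p}\,d\gm - \sum_{q\in V_1\setminus V_0} \bigl(\sum_{K_j\ni q}\dn[K_j]\gy\parlaq{p}(q)\bigr)\gd_q = \gl\gy\parlaq{p}\,d\gm - \sum_{q} B\parlaq{pq}\gd_q$. Summing against the coefficients $a_p$ gives $\Lap u = \gl u\,d\gm - \sum_{q\in V_1\setminus V_0}\bigl(\sum_p a_p B\parlaq{pq}\bigr)\gd_q = \gl u\,d\gm$, since the bracketed sum vanishes by the null-vector assumption (using symmetry of $B$, or just reading the defining relation with indices as written). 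Hence the measure $\Lap u$ has no atomic part, $u\in\dom_{L^2}\Lap$, and $\Lap u = \gl u$.

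It remains to verify $u\not\equiv 0$, and here I would use the interpolation property: evaluating at a point $p_0\in V_1\setminus V_0$, we have $\gy\parlaq{p}(p_0)=\gd_{p p_0}$, so $u(p_0)=a_{p_0}$. Thus if some $a_{p_0}\neq 0$ then $u(p_0)\neq 0$ and $u$ is a genuine eigenfunction, contradicting the assumption that $\gl$ is not a Dirichlet eigenvalue. Therefore all $a_p=0$, so $B\parlam$ has trivial kernel and, being a finite square matrix, is invertible.

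The main obstacle is the regularity bookkeeping in the middle step: one must be sure that $u$ really lies in $\dom_{L^2}\Lap$ once the Dirac masses cancel, i.e. that cancellation of the singular parts is enough to conclude $u$ is a classical-in-the-weak-sense eigenfunction rather than merely a solution with a measure Laplacian. This is exactly the content of Proposition~\ref{thm:Dirac-masses-for-laplacian} read in reverse (no atoms $\Rightarrow$ the $L^2$ equation holds), so the argument is clean, but it is the point where the hypotheses of that proposition (continuity of $u$, and $\Lap u = \gl\gy\parlaq{p}$ on each cell being $L^2$) must be checked — both are immediate here since each $\gy\parlaq{p}$ is a finite linear combination built from the continuous functions $\gh\parlam$ of Lemma~\ref{thm:existence-of-fund-eif} via Corollary~\ref{thm:relation-of-fundeif-to-psi}.
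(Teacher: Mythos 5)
Your proposal is correct and follows essentially the same route as the paper: form $u=\sum_p a_p\gy\parlaq{p}$ from a null vector, use Proposition~\ref{thm:Dirac-masses-for-laplacian} to see the Dirac masses cancel so that $u$ is a genuine $\gl$-eigenfunction vanishing on $V_0$, contradicting the hypothesis. The only cosmetic differences are that you work with a left null vector (so symmetry of $B$ is optional, whereas the paper invokes Lemma~\ref{thm:Bpq-is-symmetric-and-contin-as-gl-to-0}) and that you spell out $u(p_0)=a_{p_0}\neq 0$, which the paper leaves implicit.
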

\begin{proof}
  Suppose that $B\parlam = \left[B\parlaq{pq}\right]_{p,q \in V_1\setminus V_0}$ is not invertible, so there are values $a_{q}$ (not all 0) for which $\sum_{q\in V_1 \setminus V_0} B\parlaq{pq} a_{q}=0.$
  Define
  \linenopar
  \begin{equation*}
    u(x) := \sum_{q\in V_1 \setminus V_0} a_{q} \gy\parlaq{q}(x).
  \end{equation*}
  It is clear that $(\gl\id-\Lap)u=0$ on each $1$-cell, and that $u\evald{V_0}=0$. Now using the notation from Remark~\ref{rem:sum-over-sets-notation}, we compute the sum of the normal derivatives of $u$ over cells containing $p$, for any $p \in V_1 \setminus V_0$:\vstr[2.4]
  \linenopar
  \begin{align*}
    \sum_{K_j \ni p} \dn[K_j] u(p)
    &= \sum_{q\in V_1 \setminus V_0} a_{q} \sum_{K_j \ni p}
    \dn[K_j] \gy\parlaq{q}(p)\\
    &=\sum_{q\in V_1 \setminus V_0} a_{q} B\parlaq{qp} \\
    &=0,
  \end{align*}
  where the last equality follows by applying the symmetry established in Lemma~\ref{thm:Bpq-is-symmetric-and-contin-as-gl-to-0} to the initial assumption. So Proposition~\ref{thm:Dirac-masses-for-laplacian} implies $\Lap u$ is continuous. It follows that $(\gl\id-\Lap)u=0$ on $X$, so $u$ is a Dirichlet eigenfunction with eigenvalue $\gl$, which is a contradiction.
\end{proof}

The next result is used to prove Lemma~\ref{thm:Psi_m-has-dirac-masses} and also makes use of \eqref{eqn:relation-of-fundeif-to-psi}.

\begin{lemma}\label{thm:B-is-related-across-scales}
  For $p\in V_1 \setminus V_0$ and $q\in V_0$ we have
  \linenopar
  \begin{equation}\label{eqn:B-is-related-across-scales}
    \sum_{s\in V_1 \setminus V_0} B\parlaq{ps} \fundeif{q}(s)
    = - B\parlaq{pq}
  \end{equation}
\end{lemma}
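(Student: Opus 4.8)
The plan is to consider the specific function $u(x) := \sum_{s \in V_1 \setminus V_0} B\parlaq{ps} \gy\parlaq{s}(x)$ and extract the identity \eqref{eqn:B-is-related-across-scales} by computing its normal derivatives at the boundary $V_0$. Note that $u$ is a linear combination of the level-1 resolvent solutions $\gy\parlaq{s}$, so by \eqref{eqn:basic-resolvent-solution-preview} it satisfies $(\gl\id - \Lap)u = 0$ on the interior of every $1$-cell, and it vanishes on $V_0$ since each $\gy\parlaq{s}$ does. At a point $p' \in V_1 \setminus V_0$, the discussion following Theorem~\ref{thm:resolvent-formula-preview} (equivalently, Proposition~\ref{thm:Dirac-masses-for-laplacian}) gives $\sum_{K_j \ni p'} \dn[K_j] u(p') = \sum_{s} B\parlaq{ps} B\parlaq{sp'} = (B\parlam[]^2)_{pp'}$ — but I will not actually need to evaluate this; the key point is only that $u$ is a solution with prescribed (but for our purposes irrelevant) behavior at interior vertices.

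The heart of the argument is a Gauss–Green computation relating $u$ to the fundamental solution $\fundeif{q}$ from Lemma~\ref{thm:existence-of-fund-eif}. First I would establish the ``across scales'' comparison: using Corollary~\ref{thm:relation-of-fundeif-to-psi}, which expresses $\gy\parlaq{s}$ on $K_j$ as a rescaled copy of $\fundeif[r_j\gm_j\gl]{F_j^{-1}s}$, one relates the level-1 normal derivatives $\dn[K_j]\gy\parlaq{s}(p)$ (which assemble into $B\parlam[]$) to the level-0 normal derivatives of the fundamental solutions on $X$ itself. Concretely, for $q \in V_0$ I want to compute $\dn u(q)$. Since $\gy\parlaq{s}$ restricted to the cell $K_j \ni q$ is $\fundeif[r_j\gm_j\gl]{F_j^{-1}s}\comp F_j^{-1}$, the scaling \eqref{eqn:def:normal-derivative} converts $\dn u(q)$ into a sum over the single cell $K_{j}$ containing $q$ of rescaled level-0 normal derivatives $\dn \fundeif[r_j\gm_j\gl]{F_j^{-1}s}(F_j^{-1}q)$.

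The cleanest route, though, is probably to apply the Gauss–Green (Green's second identity) formula to the pair $u$ and $\gy\parlaq{q}$ — or rather to $u$ and a fundamental solution — over $X$. Since $(\gl\id - \Lap)u = 0$ except for Dirac masses on $V_1 \setminus V_0$ with weights $\sum_{K_j\ni p'}\dn[K_j]u(p') = (B^2)_{pp'}$, and since $(\gl\id-\Lap)\fundeif{q}=0$ on all of $X$ with $\fundeif{q}\evald{V_0} = \gd_{q\cdot}$, pairing them yields on one side the boundary term $\sum_{r\in V_0}\bigl(u(r)\dn\fundeif{q}(r) - \fundeif{q}(r)\dn u(r)\bigr) = -\dn u(q)$ (using $u\evald{V_0}=0$ and $\fundeif{q}\evald{V_0}=\gd_{q\cdot}$), and on the other side the interior Dirac contributions from $u$ evaluated against $\fundeif{q}$. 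Matching these and using the symmetry $B\parlaq{ps}=B\parlaq{sp}$ from Lemma~\ref{thm:Bpq-is-symmetric-and-contin-as-gl-to-0} should collapse everything to the stated identity $\sum_{s} B\parlaq{ps}\fundeif{q}(s) = -B\parlaq{pq}$, once one recognizes that $-\dn u(q)$ (an $m=0$-scale normal derivative) equals $\sum_s B\parlaq{ps}\dn\gz_q$-type terms that telescope into $-B\parlaq{pq}$ via the definition $B\parlaq{pq} = \sum_{K_j\ni q}\dn[K_j]\gy\parlaq{p}(q)$.

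The main obstacle I anticipate is bookkeeping the scaling factors correctly when passing between the level-1 normal derivatives (which define $B\parlam[]$) and the level-0 normal derivatives on $X$ that appear after applying $F_j^{-1}$, together with keeping track of which cells contain $q\in V_0$ versus $p,s\in V_1\setminus V_0$ and not double-counting in the sums $\sum_{K_j \ni \cdot}$. The conceptual content is just two applications of Gauss–Green plus the symmetry of $B\parlam[]$; the risk is purely in the indices and the $r_j,\gm_j$ prefactors. A secondary subtlety is justifying that Gauss–Green applies to $u$ despite $\Lap u$ being only a measure — but this is exactly the content of Proposition~\ref{thm:Dirac-masses-for-laplacian} and the fact that $u \in \dom_{\sM}\Lap$, so it is already available.
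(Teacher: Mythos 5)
Your underlying mechanism is the right one, and is in fact the paper's: pair against the global eigenfunction $\fundeif{q}$ via Gauss--Green, use that its Laplacian is continuous (so its normal derivatives at a point of $V_1\setminus V_0$ have vanishing cell-sum), and use the boundary values $\fundeif{q}(s)=\gd_{qs}$ on $V_0$. But your execution has a genuine gap caused by the auxiliary function $u=\sum_{s}B\parlaq{ps}\gy\parlaq{s}$, which inserts an extra factor of $B\parlam$ that you never remove. Carrying out your own plan: the interior Dirac masses of $u$ (which are not ``irrelevant'' --- they are exactly one side of your Gauss--Green identity) have weights $\sum_{K_j\ni p'}\dn[K_j]u(p')=\sum_{s}B\parlaq{ps}B\parlaq{sp'}$, while the boundary term is $-\dn u(q)=-\sum_{s}B\parlaq{ps}B\parlaq{sq}$; in particular $\dn u(q)$ does not ``telescope into $-B\parlaq{pq}$'' as you assert. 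What your computation actually proves is
\begin{equation*}
  \sum_{s\in V_1\setminus V_0} B\parlaq{ps}\Bigl(\,\sum_{p'\in V_1\setminus V_0} B\parlaq{sp'}\fundeif{q}(p') + B\parlaq{sq}\Bigr) = 0
  \qq \text{for every } p\in V_1\setminus V_0,
\end{equation*}
i.e.\ that $B\parlam$ annihilates the vector whose vanishing is the content of \eqref{eqn:B-is-related-across-scales}. The symmetry of $B\parlam$ (Lemma~\ref{thm:Bpq-is-symmetric-and-contin-as-gl-to-0}) cannot remove that outer factor; you would need the invertibility of $B\parlam$ (Lemma~\ref{thm:B-is-invertible}, available under Assumption~\ref{hyp:lambda-not-a-Dirichlet-eiv}), which you never invoke. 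With that citation added your route does close, but as written the final ``collapse via symmetry'' step is unjustified.

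The detour through $u$ is also unnecessary, and dropping it removes both the invertibility issue and all of the scaling bookkeeping you were worried about. Apply Gauss--Green on each $1$-cell $K_j$ directly to the pair $\gy\parlaq{p}$ and $\fundeif{q}$: both are $\gl$-eigenfunctions on $K_j$, so the integral term vanishes, and since $\gy\parlaq{p}(s)=\gd_{ps}$ on $V_1$ this yields $\dn[K_j]\fundeif{q}(p)=\sum_{s\in F_j(V_0)}\fundeif{q}(s)\dn[K_j]\gy\parlaq{p}(s)$. Summing over $j$, the left side vanishes because $\Lap\fundeif{q}$ is continuous at $p$ (Proposition~\ref{thm:Dirac-masses-for-laplacian}), and splitting the right side over $s\in V_0$ (where $\fundeif{q}(s)=\gd_{qs}$ produces $B\parlaq{pq}$) and $s\in V_1\setminus V_0$ gives \eqref{eqn:B-is-related-across-scales} immediately. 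This is the paper's proof; note that Corollary~\ref{thm:relation-of-fundeif-to-psi} and the $r_j,\gm_j$ rescalings play no role in it, so the prefactor bookkeeping you anticipated never arises.
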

\begin{proof}
  For a $1$-cell $K_j = F_j(X)$, the Gauss-Green formula gives
  \linenopar
  \begin{align*}
    &\sum_{s\in F_j(V_0)} \left(  \gy\parlaq{p}(s) \dn[K_j] \gh\parlaq{q}(s) - \fundeif{q}(s) \dn[K_j] \gy\parlaq{p}(s) \right) \\
    &\qq= \int_{K_j} \left( \gy\parlaq{p}(x) \Lap \fundeif{q}(x) - \gh\parlaq{q}(x) \Lap \gy\parlaq{p}(x)\right)\, d\gm(x) =0
  \end{align*}
  because both $\gy\parlaq{p}(x)$ and $\fundeif{q}(x)$ are Laplacian eigenfunctions with eigenvalue $\gl$ on each $1$-cell $K_j$. However for $s \in V_1$ we have $\gy\parlaq{p}(s) = \gd_{ps}$, so this becomes
  \linenopar
  \begin{align}\label{eqn:step1:thm:B-is-related-across-scales}
    \dn[K_j] \gh\parlaq{q}(p)
    = \sum_{s \in F_j(V_0)} \fundeif{q}(s) \dn[K_j] \gy\parlaq{p}(s).
  \end{align}
  The continuity of the Laplacian of $\fundeif{q}$ at $p \in V_1 \setminus V_0$ implies that its normal derivatives sum to zero, as indicated by Proposition~\ref{thm:Dirac-masses-for-laplacian}. Thus, summing over $1$-cells yields
  \linenopar
  \begin{align*}
    0
    = \sum_{j=1}^J \dn[K_j] \fundeif{q}(p)
    &= \sum_{j=1}^J \sum_{s \in F_j(V_0)} \fundeif{q}(s) \dn[K_j]
    \gy\parlaq{p}(s)
      &&\text{by \eqref{eqn:step1:thm:B-is-related-across-scales}} \\
    &= \sum_{s \in V_1} \fundeif{q}(s) \sum_{K_j \ni s} \dn[K_j] \gy\parlaq{p}(s)
      &&\text{interchange} \\
    &= \sum_{s \in V_0} \fundeif{q}(s) B\parlaq{ps}
      + \sum_{s \in V_1 \setminus V_0} \negsp[6] \fundeif{q}(s) B\parlaq{ps}
      &&\text{split} \\
    &= B\parlaq{pq}
      + \sum_{s \in V_1 \setminus V_0} \negsp[6] B\parlaq{ps}\fundeif{q}(s)
      &&\fundeif{q}(s)=\gd_{qs} \text{ on } V_0
  \end{align*}
  where we used the sum notation of Remark~\ref{rem:sum-over-sets-notation}.
\end{proof}

\subsection{Construction of the resolvent kernel}
\label{sec:Construction-of-the-resolvent-kernel}

Now that we have obtained some necessary properties of $B\parlam$, we can proceed with the development of a sequence of technical lemmas required for the proof of the main result. We begin with another corollary of Proposition~\ref{thm:Dirac-masses-for-laplacian}.

\begin{cor}\label{thm:Bpqsum-as-remainder}
  If $p \in V_1$ and \gl satisfies Assumption~\ref{hyp:lambda-not-a-Dirichlet-eiv}, then
  \begin{equation}\label{eqn:Bpqsum-as-remainder}
    (\gl\id-\Lap)\gy\parlaq{p} = \sum_{q \in V_1 \less V_0} B\parlaq{pq} \gd_{q}.
  \end{equation}
\end{cor}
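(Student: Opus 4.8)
The plan is to apply Proposition~\ref{thm:Dirac-masses-for-laplacian} directly to $u = \gy\parlaq{p}$. By the defining property \eqref{eqn:basic-resolvent-solution-preview}, $\gy\parlaq{p}$ is continuous on $X$ and satisfies $(\gl\id-\Lap)\gy\parlaq{p}=0$ on each $1$-cell $K_j$; equivalently $\Lap\gy\parlaq{p} = \gl\gy\parlaq{p}$ on the interior of each $K_j$. Thus $v_j = \gl\,\gy\parlaq{p}\evald{K_j}$ in the notation of Proposition~\ref{thm:Dirac-masses-for-laplacian}, and $\Lap\gy\parlaq{p}$ exists as a measure of the form
\linenopar
\begin{equation*}
  \Lap \gy\parlaq{p}(x)
  = \sum_{j=1}^J \gl\,\gy\parlaq{p}(x)\charfn{K_j}(x)
    - \sum_{q \in V_1 \setminus V_0} \gd_q(x) \sum_{K_j \ni q} \dn[K_j]\gy\parlaq{p}(q).
\end{equation*}

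Next I would identify the pieces. Since $\{K_j\}$ covers $X$ and the cells overlap only on the finite set $V_1$ (which is $\gm$-null, as $\gm$ has no atoms), the first sum equals $\gl\,\gy\parlaq{p}(x)$ as a measure (absolutely continuous part). The inner sum in the second term is exactly $B\parlaq{pq}$ by its definition in \eqref{eqn:Bobmatrix-preview}. Rearranging gives
\linenopar
\begin{equation*}
  (\gl\id-\Lap)\gy\parlaq{p}(x)
  = \gl\,\gy\parlaq{p}(x) - \Lap\gy\parlaq{p}(x)
  = \sum_{q \in V_1 \setminus V_0} B\parlaq{pq}\,\gd_q(x),
\end{equation*}
which is \eqref{eqn:Bpqsum-as-remainder}. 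One should note that the hypothesis $p \in V_1$ is used only to make sense of $\gy\parlaq{p}$ via \eqref{eqn:basic-resolvent-solution-preview}, and Assumption~\ref{hyp:lambda-not-a-Dirichlet-eiv} guarantees $\gy\parlaq{p}$ is well defined (the relevant local problems are solvable, cf. Corollary~\ref{thm:relation-of-fundeif-to-psi} and Lemma~\ref{thm:existence-of-fund-eif}).

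This is essentially a bookkeeping argument rather than a substantive one, so I do not anticipate a serious obstacle; the only point requiring minor care is the justification that the absolutely continuous part $\sum_j \gl\,\gy\parlaq{p}\charfn{K_j}$ collapses to $\gl\,\gy\parlaq{p}$ without double-counting on the overlap set $V_1$, which is immediate since that set carries no $\gm$-mass. Everything else follows by plugging the eigenfunction identity and the definition of $B\parlaq{pq}$ into Proposition~\ref{thm:Dirac-masses-for-laplacian}.
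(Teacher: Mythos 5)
Your proposal is correct and follows essentially the same route as the paper: the paper's proof simply observes that the identity is immediate from Proposition~\ref{thm:Dirac-masses-for-laplacian} together with the definitions \eqref{eqn:basic-resolvent-solution-preview}--\eqref{eqn:Bobmatrix-preview}, which is exactly your argument written out in detail. The extra bookkeeping you include (identifying $v_j=\gl\gy\parlaq{p}$ on each cell and noting the overlap set $V_1$ is $\gm$-null) is fine and just makes explicit what the paper leaves to the reader.
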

  \begin{proof}
    With $\gy\parlaq{p}$ and $B\parlaq{pq}$ defined as in \eqref{eqn:basic-resolvent-solution-preview}--\eqref{eqn:Bobmatrix-preview}, this is clear from \eqref{eqn:Dirac-masses-for-laplacian}.
  \end{proof}

\begin{remark}\label{rem:resolvent-applied-to-gy}
  From the definition in \eqref{eqn:Bobmatrix-preview}, we have $B\parlaq{pq} = \sum_{K_j \ni q} \dn[K_j] \gy\parlaq{p}(q)$ for $q \in F_j(V_0)$. Thus Corollary~\ref{thm:Bpqsum-as-remainder} expresses the fact that an application of the resolvent to $\gy\parlaq{p}$ leaves behind nothing but a Dirac mass at every point of $V_1 \less V_0$, each weighted by the sum of the normal derivatives of $\gy\parlaq{p}$.
\end{remark}

The conclusion of the following lemma appears very technical but it expresses a straightforward idea: at each stage $m$, our formula for the resolvent corrects Dirac masses at the \nth[m] level and introduces new ones at the \nth[(m+1)]. Thus, summing over $m$ (as we do in Theorem~\ref{thm:Dirichlet-resolvent-formula}) produces a telescoping series. This makes precise the comment ``these are wiped away in the limit'' from the introductory discussion of the main result.

\begin{lemma}\label{thm:Psi_m-has-dirac-masses}
  Define $\gx\parlaq{p,m}$ to be the unique function solving
  \linenopar
  \begin{equation}\label{eqn:def:xi}
    \begin{cases}
      (\Lap-\gl)\gx\parlaq{p,m}=0, &\text{on all $m$-cells}, \\
      \gx\parlaq{p,m}(q) = \gd_{pq}, &\text{for $p \in V_m \less V_0$ and $q \in V_{m}$}.
    \end{cases}
  \end{equation}
  Then one has the identity
  \linenopar
  \begin{align*}
    \lefteqn{(\gl\id-\Lap_y) \negsp[5] \sum_{|\word|=m}  r_\word \gY\parlam[r_\word \gm_\word \gl] (F_\word^{-1}x,F_\word^{-1}y)} \hstr[18] \\
    &= \sum_{p\in V_{m+1}\setminus V_0} \negsp[12] \gx\parlaq{p,m+1}(x) \gd_{p}(y)
    - \sum_{q\in V_{m}\setminus V_0} \negsp[8] \gx\parlaq{q,m}(x) \gd_{q}(y).
  \end{align*}
\end{lemma}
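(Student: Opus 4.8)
\textbf{Proof plan for Lemma~\ref{thm:Psi_m-has-dirac-masses}.}

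The plan is to expand the left-hand side using the definition \eqref{eqn:Psi-preview} of $\gY\parlam$, apply the resolvent operator in the $y$-variable term-by-term, and identify what survives. First I would fix a word $\word$ with $|\word|=m$ and look at a single term $r_\word \gY\parlam[r_\word \gm_\word \gl](F_\word^{-1}x, F_\word^{-1}y)$. Writing $\gl_\word = r_\word \gm_\word \gl$ and using the scaling identity \eqref{eqn:Laplacian-scaling} (in the form $\Lap(v\comp F_\word^{-1}) = (r_\word\gm_\word)^{-1}(\Lap v)\comp F_\word^{-1}$), applying $(\gl\id-\Lap_y)$ to the $\word$-term produces $r_\word \cdot (r_\word\gm_\word)^{-1}$ times $\big[(\gl_\word \id - \Lap)\gY\parlam[\gl_\word]\big]$ evaluated at $(F_\word^{-1}x,F_\word^{-1}y)$, and one must also account for the change of variables in the measure $d\gm$ when these Dirac masses are interpreted weakly — the factor $\gm_\word$ appears and cancels, leaving $r_\word \gm_\word^{-1}$ against the $\gm_\word$ from the measure so that the net coefficient matches cleanly. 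Then, by the heuristic computation carried out in \S\ref{sec:preview-of-results} (made rigorous via Gauss--Green, exactly as in Corollary~\ref{thm:Bpqsum-as-remainder}), $(\gl_\word\id-\Lap_y)\gY\parlam[\gl_\word](\cdot,\cdot)$ acting weakly against a test function produces $\sum_{p\in V_1\setminus V_0}\gy\parlaq[\gl_\word]{p}(\cdot)\,\gd_p(\cdot)$ in the second variable. Pulling back through $F_\word$, this says the $\word$-term contributes $\sum_{p\in V_1\setminus V_0} \big(r_\word\,\text{(appropriate scalar)}\big)\gy\parlaq[\gl_\word]{p}(F_\word^{-1}x)\,\gd_{F_\word p}(y)$, with $F_\word p$ ranging over the $(m+1)$-level vertices interior to the cell $K_\word$.

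The next step is to assemble these contributions over all $|\word|=m$ and recognize the two sums on the right-hand side. A point $p'\in V_{m+1}\setminus V_0$ lies in $F_\word X$ for those words $\word$ of length $m$ with $p' \in K_\word$; if $p'$ is in the interior of a single $m$-cell it appears for exactly one $\word$, and if $p'\in V_m\setminus V_0$ it appears for each $m$-cell abutting it. The claim is that, when one uses Corollary~\ref{thm:relation-of-fundeif-to-psi} to rewrite each $\gy\parlaq[\gl_\word]{p}$ as a rescaled copy of the fundamental solution $\fundeif{}$, the function $\sum_{|\word|=m} r_\word \gY\parlam[\gl_\word](F_\word^{-1}x,\cdot)$ restricted near $p'\in V_{m+1}\setminus V_0$ is precisely $\gx\parlaq{p',m+1}(x)$: it solves $(\Lap-\gl)(\cdot)=0$ on every $(m+1)$-cell and takes value $\gd_{p'q}$ at $q\in V_{m+1}$. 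For $p'\notin V_m$ this produces the first sum directly; for $p'\in V_m\setminus V_0$ the several $m$-cell contributions combine, and Lemma~\ref{thm:B-is-related-across-scales} (the across-scales identity $\sum_{s\in V_1\setminus V_0}B\parlaq{ps}\fundeif{q}(s) = -B\parlaq{pq}$) is exactly what converts the surplus into the negative of a $\gx\parlaq{q,m}$-weighted Dirac mass, giving the subtracted sum. I would verify the matching boundary values by checking that the assembled function is continuous across $m$-cell boundaries (using $\gy\parlaq[\gl_\word]{p}(q)=\gd_{pq}$ on $V_1$ from \eqref{eqn:basic-resolvent-solution-preview}) and that it is the correct eigenfunction on $(m+1)$-cells, so that by uniqueness in \eqref{eqn:def:xi} it equals $\gx\parlaq{p',m+1}(x)$.

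The main obstacle I expect is the bookkeeping at the "old" vertices $q\in V_m\setminus V_0$: there, a Dirac mass at $q$ is generated both by the $m$-level analysis (it is in $V_{m-1+1}$ for the previous stage) and, in this lemma, by the $(m+1)$-level terms sitting in the several $m$-cells that meet at $q$, and one must show these combine to give exactly $-\gx\parlaq{q,m}(x)\gd_q(y)$ rather than some other multiple. This is where Lemma~\ref{thm:B-is-related-across-scales} does the real work — it is the precise statement that the "new" Dirac masses introduced at level $m$ by the level-$(m+1)$ correction cancel against the level-$m$ Dirac masses, modulo the value of the correction function. A secondary technical point is keeping the scaling constants $r_\word$, $\gm_\word$ straight through the weak (measure-theoretic) application of $\Lap_y$, since $\Lap$ depends on $\gm$ and the pushforward of $\gm$ under $F_\word$ is $\gm_\word^{-1}$ times $\gm$ on $K_\word$; I would handle this by testing against an arbitrary $v\in\dom_0\energy$ and reducing to Gauss--Green on each cell, as in the proof of Corollary~\ref{thm:Bpqsum-as-remainder} and the discussion after Theorem~\ref{thm:resolvent-formula-preview}.
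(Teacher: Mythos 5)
Your proposal is correct and takes essentially the same route as the paper's proof: reduce to computing the Dirac masses at points of $V_{m+1}$, use the inverse relation $G\parlam = (B\parlam)^{-1}$ together with the normal-derivative scaling to identify the coefficient $\gx\parlaq{z,m+1}(x)$ at points of $V_{m+1}\setminus V_m$, and invoke Lemma~\ref{thm:B-is-related-across-scales} at points of $V_m \setminus V_0$ to show the combined cell contributions equal $\gx\parlaq{z,m+1}(x)-\gx\parlaq{z,m}(x)$, identified via their values on $V_{m+1}$ and uniqueness of piecewise $\gl$-eigenfunctions. Your weak (test-function) bookkeeping of the factors $r_\word,\gm_\word$ amounts to the paper's use of Proposition~\ref{thm:Dirac-masses-for-laplacian} and the scaling of normal derivatives, so the two arguments coincide in substance.
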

\begin{proof}
  Since $\gY\parlam[r_\word \gm_\word \gl]$ is a sum of functions satisfying the \gl-eigenfunction equation on the level $1$ cells $K_j$, it is immediate that
  \linenopar
  \begin{equation*}
    (\gl-\Lap_y) \gY\parlam[r_\word \gm_\word \gl](F_\word^{-1}x,F_\word^{-1}y) = 0,
    \qq \text{for $y \notin V_{m+1}$.}
  \end{equation*}
  By Proposition~\ref{thm:Dirac-masses-for-laplacian}, we therefore need only compute the sum of normal derivatives at points of $V_{m+1}$.

  (1) First suppose that $z \in V_{m+1} \setminus V_{m}$ with $z=F_\word p$ for some $|\word|=m$ and $p \in V_1 \setminus V_0$, so that
  \linenopar
  \begin{equation*}
    \gY\parlam[r_\word \gm_\word \gl](F_\word^{-1}x,F_\word^{-1}z)
    = %r_\word^{-1} 
      \negsp[8] \sum_{s,t \in V_1 \setminus V_0} \negsp[8]
      G\parlaq[r_\word \gm_\word \gl]{st} \,
      \gy\parlaq[r_\word \gm_\word \gl]{s} \negsp[3](F_\word^{-1}x) \,
      \gy\parlaq[r_\word \gm_\word \gl]{t} \negsp[3](F_\word^{-1}z),
  \end{equation*}
  and collecting normal derivatives at $z$ yields
  \linenopar
  \begin{align}
    \lefteqn{\negsp[12] \sum_{F_\word(K_j)\ni z} \negsp[12] \dn[F_\word(K_j)] \negsp[5] \left(\gY\parlam[r_\word \gm_\word \gl](F_\word^{-1}x, \vstr[2.2]F_\word^{-1}z)\right)}
      \hstr[20] \notag \\
    &= \negsp[12] \sum_{s,t\in V_1\setminus V_0} \negsp[8]
      G\parlaq[r_\word \gm_\word \gl]{st}
      \gy\parlaq[r_\word \gm_\word \gl]{s} \negsp[3](F_\word^{-1}x)
      \sum_{F_\word(K_j)\ni z} \negsp[12] \dn[F_\word(K_j)] \gy\parlaq[r_\word \gm_\word \gl]{t} \negsp[3](F_\word^{-1}z) \notag \\
    &= r_\word^{-1} \negsp[12] \sum_{s,t\in V_1\setminus V_0} \negsp[8]
      G\parlaq[r_\word \gm_\word \gl]{st} \,
      \gy\parlaq[r_\word \gm_\word \gl]{s} \negsp[3](F_\word^{-1}x) \,
      B\parlaq[r_\word \gm_\word \gl]{tp},
      \label{eqn:Psi_m-has-dirac-masses:comp1}
  \end{align}
  because
  \linenopar
  \begin{align*}
    B\parlaq[r_\word \gm_\word \gl]{tp}
    &= \sum_{K_j \ni p} \dn[K_j] \gy\parlaq[r_\word \gm_\word \gl]{t}(p)
      &&\text{by \eqref{eqn:Bobmatrix-preview}} \notag \\
    &= \sum_{K_j \ni F_\word^{-1} z} \dn[K_j] \gy\parlaq[r_\word \gm_\word  \gl]{t}(F_\word^{-1} z)
      &&p = F_\word^{-1} z \in K_j \notag \\
    &= r_\word \sum_{F_\word (K_j) \ni z} \dn[F_\word (K_j)] \left(\gy\parlaq[r_\word \gm_\word \gl]{t} \comp F_\word^{-1}\right)(z),
  \end{align*}
  where the last line follows from $\dn[K_\word] u(F_\word^{-1} q_i) = r_\word \dn (u \comp F_\word^{-1})(q_i)$; cf. \eqref{eqn:def:normal-derivative}.

  Continuing the computation from \eqref{eqn:Psi_m-has-dirac-masses:comp1} and making use of $G := B^{-1}$, we have
  \linenopar
  \begin{align*}
    \sum_{F_\word(K_j)\ni z} \negsp[12] \dn[F_\word(K_j)] \negsp[5] \left(\gY\parlam[r_\word \gm_\word \gl](F_\word^{-1}x, \vstr[2.2]F_\word^{-1}z)\right)
    &= r_\word^{-1} \sum_{s\in V_1\setminus V_0} \negsp[8] \gd_{sp}
        \left( \gy\parlaq[r_\word \gm_\word \gl]{s} (F_\word^{-1}(x)) \right)  \\
    &= r_\word^{-1} \gy\parlaq[r_\word \gm_\word \gl]{p} (F_\word^{-1}(x)) \\
    &= r_\word^{-1} \gx\parlaq{z,m+1}(x)
  \end{align*}
  thus showing that $(\gl\id-\Lap_y)$ has a Dirac mass $\gx\parlaq{z,m+1}(x) \gd_{z}(y)$ at $z \in V_{m+1} \setminus V_{m}$.

  \pgap

  (2) Next consider a point $z \in V_{m} \setminus V_0$. In this case there are several words $\word_i$ for which $z = F_{\word_i}(p_i)$ for some $p_i \in V_0$.  For such a word \word and such a $p$ we substitute from Lemma~\ref{thm:B-is-related-across-scales} into \eqref{eqn:Psi_m-has-dirac-masses:comp1}, obtaining
  \linenopar
  \begin{align}
    \sum_{K_j \ni p} \negsp[3] \dn[F_\word(K_j)] \negsp[3] \left(r_\word
    \gY\parlam[r_\word \gm_\word \gl](F_\word^{-1}x, \vstr[2.2] F_\word^{-1}z)\right)
    &= -\negsp[10] \sum_{q,s,t \in V_1\setminus V_0} \negsp[12]
      G\parlaq[r_\word \gm_\word \gl]{s,t} \,
      \gy\parlaq[r_\word \gm_\word \gl]{s} \negsp[3](F_\word^{-1}x) \,
      B\parlaq[r_\word \gm_\word \gl]{tq} \fundeif[r_\word \gm_\word \gl]{p}(q) \notag\\
    &= -\negsp[8] \sum_{q,s \in V_1\setminus V_0} \negsp[10] \gd_{sq} \,
      \gy\parlaq[r_\word \gm_\word \gl]{s} \negsp[3](F_\word^{-1}x) \,
      \fundeif[r_\word \gm_\word \gl]{p}(q) \notag \\
    &= -\negsp[6] \sum_{q \in V_1\setminus V_0} \negsp[8]
      \gy\parlaq[r_\word \gm_\word \gl]{q} \negsp[3](F_\word^{-1}x) \,
      \fundeif[r_\word \gm_\word \gl]{p}(q).
      \label{eqn:Psi_m-has-dirac-masses:comp2}
  \end{align}
  The result is clearly a piecewise $\gl$-eigenfunction on level $(m+1)$ with respect to the $x$ variable, so is determined by its values on $V_{m+1}$.  In each of the terms~\eqref{eqn:Psi_m-has-dirac-masses:comp2}, the values are nonzero only at the points of $V_{m+1}$ that neighbor $z$ in $F_\word(X)$, and they are easily seen to coincide with $\gx\parlaq{z,m+1}-\gx\parlaq{z,m}$ at these points. Summing over all cells, we conclude that at each  $z \in V_{m} \setminus V_0$ the operator $(\gl\id-\Lap)$ has a Dirac mass $\left(\gx\parlaq{z,m+1} - \gx\parlaq{z,m}\right)\gd_{z}(y)$, and the result follows.
\end{proof}

\begin{theorem}\label{thm:Dirichlet-resolvent-formula}
  Let $\gy\parlaq{p}$ be the solution to the resolvent equation at level 1, i.e.
  \linenopar
  \begin{equation}\label{eqn:basic-resolvent}
    \begin{cases}
      (\gl \id - \Lap)\gy\parlaq{p} = 0, &\text{on each } K_j=F_j(X), \\
      \gy\parlaq{p}(q) = \gd_{pq}, &\text{for $p \in V_1 \less V_0$ and $q \in V_1$},
    \end{cases}
  \end{equation}
  where $\gd_{pq}$ is the Kronecker delta.

  Define the kernel
  \linenopar
  \begin{align}
    G\parlam(x,y)
    =& \sum_{\word \in W_\ast} r_\word \gY\parlam[r_\word \gm_\word \gl] (F_\word^{-1} x, F_\word^{-1}y),
      \label{eqn:resolvent-formula} \\
    \text{where }
    \gY\parlam(x,y)
    :=& \sum_{p,q \in V_1 \less V_0} \negsp[12] G\parlaq{pq} \gy\parlaq{p}(x) \gy\parlaq{q}(y).
    \label{eqn:Psi}
  \end{align}
  The coefficients $G\parlaq{pq}$ in \eqref{eqn:Psi} are the entries of the inverse of the matrix $B$ given by
  \linenopar
  \begin{equation}\label{eqn:Bobmatrix}
    B\parlaq{pq} := \sum_{K_j \ni q} \dn[K_j] \gy\parlaq{p}(q),
    \qq q \in F_j(V_0),
  \end{equation}
  the sum taken over all 1-cells containing $q$.

  For $\gl$ satisfying Assumption~\ref{hyp:lambda-not-a-Dirichlet-eiv}, $G\parlam(x,y)$ is symmetric and continuous in $x$ and $y$, and is in $\dom_{\sM} \Lap_y$ with $(\gl\id-\Lap_y) G\parlam(x,y) = \gd_{x}(y)$.  As it vanishes on $V_0$, it is the Dirichlet resolvent of the Laplacian.
\end{theorem}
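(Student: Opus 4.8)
The plan is to verify the four asserted properties of $G\parlam$ in turn, using the telescoping identity of Lemma~\ref{thm:Psi_m-has-dirac-masses} as the engine. First I would address convergence and continuity: on the partial sum $\sum_{|\word|\le m}$, each term $r_\word \gY\parlam[r_\word\gm_\word\gl](F_\word^{-1}x,F_\word^{-1}y)$ is supported on the cell $F_\word X$ and built from the $\gy\parlaq{p}$, which by Corollary~\ref{thm:relation-of-fundeif-to-psi} are rescaled copies of the \fundeif{p} from Lemma~\ref{thm:existence-of-fund-eif}. Since $r_\word\gm_\word\to 0$ and $B\parlam[r_\word\gm_\word\gl]\to B\parlam[0]$ (Lemma~\ref{thm:Bpq-is-symmetric-and-contin-as-gl-to-0}), the factors $G\parlaq[r_\word\gm_\word\gl]{pq}$ stay bounded, and the oscillation of $\gY\parlam[r_\word\gm_\word\gl]$ over a cell of diameter (in resistance metric) $\sim r_\word$ is $O(r_\word)$; multiplied by the prefactor $r_\word$ this gives a geometrically summable bound, so the series converges uniformly to a continuous function. (Here I would lean on the harmonic-case estimates exactly as in \cite[\S3.5]{Kig01}, since $\gl$-eigenfunctions on small cells differ from harmonic functions by $O(r_\word\gm_\word\gl)$, again via \eqref{eqn:fund-eif-problem-prop1}.)

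Next, symmetry: $B\parlam$ is symmetric by Lemma~\ref{thm:Bpq-is-symmetric-and-contin-as-gl-to-0}, hence so is $G=B^{-1}$, so each $\gY\parlam$ is symmetric in $(x,y)$ by inspection of \eqref{eqn:Psi}, and symmetry passes to the sum. Vanishing on $V_0$ is immediate because $\gy\parlaq{p}$ vanishes on $V_0$ for every $p\in V_1\setminus V_0$, so every summand — and thus $G\parlam(x,\cdot)$ — vanishes there.

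The heart of the argument is that $(\gl\id-\Lap_y)G\parlam(x,y)=\gd_x(y)$, and this is where Lemma~\ref{thm:Psi_m-has-dirac-masses} does the work. Summing that identity over $|\word|=0,1,\dots,m$ telescopes: the $\sum_{p\in V_{k+1}\setminus V_0}\gx\parlaq{p,k+1}(x)\gd_p(y)$ term at stage $k$ cancels the $\sum_{q\in V_k\setminus V_0}\gx\parlaq{q,k}(x)\gd_q(y)$ term at stage $k+1$, leaving
\linenopar
\begin{align*}
  (\gl\id-\Lap_y)\negsp[5]\sum_{|\word|\le m} r_\word \gY\parlam[r_\word\gm_\word\gl](F_\word^{-1}x,F_\word^{-1}y)
  = \negsp[8]\sum_{p\in V_{m+1}\setminus V_0}\negsp[10] \gx\parlaq{p,m+1}(x)\,\gd_p(y),
\end{align*}
since the $m=0$ stage contributes $-\sum_{q\in V_0\setminus V_0}(\cdots)=0$ as the lower term. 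I would then test against $v\in\dom_0\energy$: pairing with $\sum_{p\in V_{m+1}\setminus V_0}\gx\parlaq{p,m+1}(x)\gd_p(y)$ gives $\sum_{p\in V_{m+1}\setminus V_0}\gx\parlaq{p,m+1}(x)v(p)$, which is a Riemann-type sum that converges to $v(x)$ as $m\to\infty$ because $\gx\parlaq{p,m+1}(x)$ behaves like the harmonic "tent" $h_x^{(m+1)}(p)$ up to $O(r_\word\gm_\word\gl)$ corrections (again \eqref{eqn:fund-eif-problem-prop1} and \eqref{eqn:Laplacian-pointwise}) — more carefully, for fixed $x$ the function $y\mapsto \gx\parlaq{x,m+1}(y)$ is the piecewise $\gl$-eigenfunction pinned at $\gd_{xy}$ on $V_{m+1}$, and one checks $\int_X \gx\parlaq{p,m+1}\,d\gm \to 0$-normalized sums reproduce point evaluation, so the measures $\sum_p \gx\parlaq{p,m+1}(x)\gd_p$ converge weakly to $\gd_x$. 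Combined with the uniform convergence of the partial sums established above, passing to the limit yields $\int_X G\parlam(x,y)(\gl\id-\Lap)u(y)\,d\gm(y) = u(x)$ for all such $u$, i.e. $(\gl\id-\Lap_y)G\parlam(x,y)=\gd_x(y)$ weakly, and then by Proposition~\ref{thm:Dirac-masses-for-laplacian} that $G\parlam(x,\cdot)\in\dom_{\sM}\Lap_y$. The main obstacle I anticipate is precisely this last limiting step — making rigorous that the leftover boundary Dirac masses $\sum_{p\in V_{m+1}\setminus V_0}\gx\parlaq{p,m+1}(x)\gd_p$ converge weakly to $\gd_x$, uniformly enough in $x$ to commute with the series — which requires the cell-size estimates on $\gx\parlaq{p,m+1}$ that ultimately reduce to the harmonic case treated in \cite[\S3.5]{Kig01} and \cite[\S2.6]{Str06}, together with the fact (Assumption~\ref{hyp:lambda-not-a-Dirichlet-eiv}) that none of the rescaled parameters $r_\word\gm_\word\gl$ is an eigenvalue, so no resonant blow-up occurs along the way.
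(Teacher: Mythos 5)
Your proposal follows essentially the same route as the paper's proof: telescope Lemma~\ref{thm:Psi_m-has-dirac-masses} over $m$, pass to the weak-$\ast$ limit $\delta_x$ of the leftover boundary Dirac masses, obtain symmetry from the symmetry of $B^{-1}$ and vanishing on $V_0$ from that of the $\gy\parlaq{p}$, and prove continuity via a geometrically convergent series using Lemma~\ref{thm:Bpq-is-symmetric-and-contin-as-gl-to-0} and the convergence of the level-one solutions to the harmonic ones. One small correction: the claimed $O(r_w)$ oscillation of $\Psi^{(r_w\mu_w\lambda)}(F_w^{-1}x,F_w^{-1}y)$ over a cell is not right (the rescaled arguments sweep all of $X$, so the oscillation is only $O(1)$), but it is also not needed---uniform boundedness of $\Psi^{(r_w\mu_w\lambda)}$ times the prefactor $r_w$ already yields the uniform geometric bound, which is exactly how the paper argues.
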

\begin{proof}
  The symmetry of $G\parlam(x,y)$ is obvious. Next, note that
  \linenopar
  \begin{align*}
    \lefteqn{(\gl\id-\Lap_y) \sum_{m=0}^{M} \sum_{|\word|=m}  r_\word \gY\parlam[r_\word \gm_\word \gl](F_\word^{-1}x,F_\word^{-1}y)}
    \hstr[20] \\
    &=\sum_{m=0}^{M} \left(\sum_{p \in V_{m+1} \less V_0} \negsp[10] \gx\parlaq{p,m+1}(x)\gd_{p}(y) - \sum_{p \in V_{m} \less V_0} \negsp[10] \gx\parlaq{p,m}(x)\gd_{p}(y)  \right) \\
    &= \negsp[8] \sum_{p\in V_{M+1} \setminus V_0} \negsp[8] \gx\parlaq{p,M+1}(x)\gd_{p}(y)
      %+  \sum_{p \in V_0} \gd_{p}(y) \sum_{m=1}^{M+1} \gx\parlaq{p,m}(x)
  \end{align*}
  by Lemma~\ref{thm:Psi_m-has-dirac-masses}, so that
  \linenopar
  \begin{align*}
    \lim_{M \to \iy} (\gl\id-\Lap_y) \sum_{m=0}^{M} \sum_{|\word|=m}  r_\word \gY\parlam[r_\word \gm_\word \gl](F_\word^{-1}x,F_\word^{-1}y) = \gd_{x}(y),
  \end{align*}
  in the sense of weak-$\ast$ convergence. It follows that $G\parlam(x,y)$ is in $\dom_{\sM}(\Lap_y)$ and that $(\gl\id-\Lap_y) G\parlam(x,y) = \gd_{x}(y)$.

  All that remains is to see that $G\parlam(x,y)$ is continuous. However, Lemma~\ref{thm:Bpq-is-symmetric-and-contin-as-gl-to-0} shows $B\parlaq[r_\word \gm_\word \gl]{pq} \to B\parlaq[0]{pq}$ as $|\word|\to\iy$, and hence $G\parlaq[r_\word \gm_\word \gl]{pq} \to G\parlaq[0]{pq}$. In a similar manner, the relation $\gh\parlaq[r_\word \gm_\word \gl]{p} = \gz_p + r_\word \gm_\word \gl \gq\parlaq[r_\word \gm_\word \gl]{p}$ from Lemma~\ref{thm:existence-of-fund-eif} shows that $\gh\parlaq[r_\word \gm_\word \gl]{p} \to \gz_p$ as $|\word|\to\iy$; in particular we find that $\gy\parlaq[r_\word \gm_\word \gl]{p} \to \gy\parlaq[0]{p}$, and the latter is piecewise harmonic and bounded by $1$.  The conclusion is that  $\gY\parlam[r_\word \gm_\word \gl]$ is bounded as $|\word|\to\iy$, and since $r_\word$ is a product of $|\word|$ terms, all of which are bounded by $\max_{i} r_{i}<1$,
  \linenopar
  \begin{equation*}%\label{eqn:Gl-double-series}
      G\parlam(x,y) = \sum_{m=0}^{\iy} \sum_{|\word|=m} r_\word \gY\parlam[r_\word \gm_\word \gl](F_\word^{-1}x,F_\word^{-1}y)
      \end{equation*}
  is bounded by a convergent geometric series. Note that, for each $m$, only a finite number of terms in the second sum are nonzero. As all terms are continuous, so is $G\parlam$.
\end{proof}

\section[The Neumann resolvent kernel for pcf fractals]{The Neumann resolvent kernel for p.c.f. self-similar fractals}
\label{sec:Neumann-resolvent-kernel}

In Theorem~\ref{thm:Neumann-resolvent-formula}, we give the formula for the Neumann resolvent kernel.

\begin{lemma}\label{thm:the-Luke-matrix}
  If $\gl$ is not a Neumann eigenvalue then there is $C\parlaq{pq}$ such that
  \linenopar
  \begin{equation*}
    \sum_{q \in V_0} C\parlaq{pq} \nd \gh\parlaq{q}(x) = \gd_{px}
  \end{equation*}
  for $x \in V_0$, and $C\parlaq{pq}$ is symmetric in $p$ and $q$.
\end{lemma}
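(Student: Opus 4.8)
The plan is to exhibit $C\parlam$ as the inverse of the $V_0\times V_0$ matrix $D\parlam$ with entries $D\parlaq{xq}:=\dn\gh\parlaq{q}(x)$, and to obtain both its existence (invertibility of $D\parlam$) and its symmetry from properties of $\gh\parlaq{q}=\fundeif{q}$ already recorded in Lemma~\ref{thm:existence-of-fund-eif}.

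First I would note that $D\parlam$ is symmetric: this is exactly \eqref{eqn:fundeif-normderivs-equal}, which says $\dn\gh\parlaq{q}(x)=\dn\gh\parlaq{x}(q)$ for $x,q\in V_0$. Since the inverse of an invertible symmetric matrix is again symmetric, it then suffices to show that $D\parlam$ is invertible whenever $\gl$ is not a Neumann eigenvalue, after which we set $C\parlam:=(D\parlam)^{-1}$ and read off the desired identity $\sum_{q\in V_0}C\parlaq{pq}\dn\gh\parlaq{q}(x)=\gd_{px}$ together with the symmetry of $C\parlam$.

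For invertibility I would argue by contradiction, along the lines of the proof of Lemma~\ref{thm:B-is-invertible}. Suppose $D\parlam$ is singular and pick scalars $\{a_q\}_{q\in V_0}$, not all zero, with $\sum_{q\in V_0}a_q\,\dn\gh\parlaq{q}(x)=0$ for every $x\in V_0$. Set $u:=\sum_{q\in V_0}a_q\gh\parlaq{q}$. By \eqref{eqn:fund-eif-problem} each $\gh\parlaq{q}$ satisfies $(\gl\id-\Lap)\gh\parlaq{q}=0$ on $X$, and by \eqref{eqn:fund-eif-problem-prop1} together with the proof of Lemma~\ref{thm:existence-of-fund-eif} it lies in $\dom_{L^2}\Lap$; hence $u\in\dom_{L^2}\Lap$ with $\Lap u=\gl u$ on $X$, and being a combination of continuous functions $u$ is itself continuous. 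Its normal derivatives vanish on the boundary, $\dn u(x)=\sum_q a_q\dn\gh\parlaq{q}(x)=0$ for all $x\in V_0$, so $u$ meets the Neumann condition. Finally $u\neq0$: evaluating on $V_0$ and using $\gh\parlaq{q}(p)=\gd_{pq}$ gives $u(p)=a_p$, and some $a_p\neq0$. Thus $u$ is a nontrivial Neumann eigenfunction with eigenvalue $\gl$, contradicting the hypothesis; therefore $D\parlam$ is invertible.

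I do not anticipate a serious obstacle; the only point needing a moment's care is confirming that the linear combination $u$ genuinely qualifies as a Neumann eigenfunction — that it lies in the domain of $\Lap$ and has vanishing normal derivative at every point of $V_0$ — which is immediate from the structural description of $\gh\parlaq{q}$ in Lemma~\ref{thm:existence-of-fund-eif}. The symmetry of $C\parlam$ then follows at once from the symmetry of $D\parlam$ established in the first step.
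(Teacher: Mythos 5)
Your proposal is correct and follows essentially the paper's own argument: you take $C\parlam$ to be the inverse of the symmetric matrix $\bigl[\dn\gh\parlaq{q}(x)\bigr]$, with symmetry coming from \eqref{eqn:fundeif-normderivs-equal} and invertibility from the fact that a nontrivial vanishing combination of the normal-derivative vectors would yield a nonzero Neumann eigenfunction. The only difference is that you spell out this linear-independence step (which the paper declares immediate), which is a welcome but not substantively different elaboration.
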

\begin{proof}
  Since $\gl$ is not a Neumann eigenvalue, the set of vectors $\left\{\left(\nd \gh\parlaq{q}(x)\right)_{x \in  V_0}\right\}_{q\in V_0}$ is linearly independent, whence the existence of the $C\parlaq{pq}$ is immediate.  Symmetry follows from \eqref{eqn:fundeif-normderivs-equal} because the matrix $\left[C\parlaq{pq}\right]$ is the inverse of the symmetric matrix $\left[\nd \gh\parlaq{p}(q)\right]$.
\end{proof}

From this and Theorem~\ref{thm:Dirichlet-resolvent-formula} we may readily deduce the following result.

\begin{theorem}\label{thm:Neumann-resolvent-formula}
  If $\gl$ satisfies Assumption~\ref{hyp:lambda-not-a-Dirichlet-eiv} and also is not a Neumann eigenvalue, then
  \linenopar
  \begin{equation}\label{eqn:Neumann-resolvent-formula}
    G\parlaq{N}(x,y)
    = G\parlam(x,y) + \sum_{p,q\in V_0} C\parlaq{pq} \gh\parlaq{p}(x) \gh\parlaq{q}(y)
  \end{equation}
  is symmetric, is in $\dom_{\sM}(\Lap_y)$, and satisfies $(\gl-\Lap_y) G\parlaq{N}(x,y) = \gd_x(y)$ on $X \less V_0$.  It has vanishing normal derivatives on $V_0$ and is therefore the Neumann resolvent kernel of the Laplacian.
\end{theorem}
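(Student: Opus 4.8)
The plan is to check, for $x$ held fixed and $y$ varying, the properties claimed of $G\parlaq{N}(x,\cdot)$ and then read off the kernel identity from the weak description of the Neumann Laplacian. Write $E\parlam(x,y):=\sum_{p,q\in V_0}C\parlaq{pq}\gh\parlaq{p}(x)\gh\parlaq{q}(y)$ for the correction term, so $G\parlaq{N}=G\parlam+E\parlam$. Symmetry of $G\parlaq{N}$ is immediate from the symmetry of $G\parlam$ (Theorem~\ref{thm:Dirichlet-resolvent-formula}) together with $C\parlaq{pq}=C\parlaq{qp}$ (Lemma~\ref{thm:the-Luke-matrix}). For the Laplacian, Lemma~\ref{thm:existence-of-fund-eif} gives that each $\gh\parlaq{q}$ lies in $\dom_{L^2}\Lap$ and satisfies $(\gl\id-\Lap)\gh\parlaq{q}=0$ on all of $X$; hence for fixed $x$ the finite sum $E\parlam(x,\cdot)$ lies in $\dom_{\sM}\Lap_y$ with $(\gl\id-\Lap_y)E\parlam(x,y)=0$ identically on $X$. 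Since $G\parlam(x,\cdot)\in\dom_{\sM}\Lap_y$ with $(\gl\id-\Lap_y)G\parlam(x,y)=\gd_x(y)$ by Theorem~\ref{thm:Dirichlet-resolvent-formula}, adding $E\parlam$ changes the boundary values but not the singular part of the Laplacian, so $G\parlaq{N}(x,\cdot)\in\dom_{\sM}\Lap_y$ and $(\gl\id-\Lap_y)G\parlaq{N}(x,y)=\gd_x(y)$ on $X\less V_0$; continuity in $y$ is clear.

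The heart of the argument is the Neumann boundary condition, which I would verify for $x\notin V_0$ (the exceptional set $V_0$ is $\gm$-null, which is all the kernel statement requires). By the defining property of $C\parlaq{pq}$ in Lemma~\ref{thm:the-Luke-matrix}, for each $s\in V_0$ one has $\dn E\parlam(x,\cdot)(s)=\sum_{p\in V_0}\gh\parlaq{p}(x)\sum_{q\in V_0}C\parlaq{pq}\dn\gh\parlaq{q}(s)=\sum_{p\in V_0}\gh\parlaq{p}(x)\gd_{ps}=\gh\parlaq{s}(x)$, so it suffices to prove the reciprocity identity $\dn G\parlam(x,\cdot)(s)=-\gh\parlaq{s}(x)$. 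I would obtain this from Green's second identity applied to the pair $G\parlam(x,\cdot)$ and $\gh\parlaq{s}$: because $x\notin V_0$ the measure $\Lap_y G\parlam(x,\cdot)$ is $\gl G\parlam(x,\cdot)\,d\gm$ minus a Dirac mass at $x$, while $\Lap\gh\parlaq{s}=\gl\gh\parlaq{s}$, so the two bulk terms cancel and leave $-\gh\parlaq{s}(x)$; on the other side the boundary contribution collapses to $\sum_{q\in V_0}\gh\parlaq{s}(q)\dn G\parlam(x,\cdot)(q)=\dn G\parlam(x,\cdot)(s)$, since $G\parlam(x,\cdot)$ vanishes on $V_0$ (each $\gy\parlaq{p}$ with $p\in V_1\less V_0$ does, by \eqref{eqn:basic-resolvent}, and this propagates through \eqref{eqn:resolvent-formula}) and $\gh\parlaq{s}(q)=\gd_{sq}$ there. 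Equating the two expressions yields $\dn G\parlam(x,\cdot)(s)=-\gh\parlaq{s}(x)$, and therefore $\dn G\parlaq{N}(x,\cdot)(s)=0$.

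Finally, with symmetry, $G\parlaq{N}(x,\cdot)\in\dom_{\sM}\Lap_y$, $(\gl\id-\Lap_y)G\parlaq{N}(x,y)=\gd_x(y)$ off $V_0$, and vanishing normal derivatives on $V_0$ in hand, I would conclude that $u(x):=\int_X G\parlaq{N}(x,y)f(y)\,d\gm(y)$ satisfies $\energy(u,v)+\gl\int_X uv\,d\gm=\int_X fv\,d\gm$ for every $v\in\dom\energy$ --- the weak form of $(\gl\id-\Lap)u=f$ with vanishing normal derivatives on $V_0$ --- by moving the energy form inside the integral, applying Gauss--Green in the $x$ variable to $G\parlaq{N}(\cdot,y)$ (using symmetry to transfer the facts above to the first slot), noting that the boundary term vanishes, and reassembling the remaining integrals; this identifies $G\parlaq{N}$ as the Neumann resolvent kernel. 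I expect the only step with genuine content to be the reciprocity identity $\dn G\parlam(x,\cdot)(s)=-\gh\parlaq{s}(x)$; the delicate points there are justifying Green's second identity for a function whose Laplacian is only a measure, and being careful that the Dirac mass at $x$ is genuinely present in $\Lap_y G\parlam(x,\cdot)$ over all of $X$, which is why one restricts to $x\notin V_0$ (for $x\in V_0$ the function $G\parlam(x,\cdot)$ vanishes identically).
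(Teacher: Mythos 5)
Your proposal is correct and follows essentially the same route as the paper: symmetry from the symmetry of $G\parlam$ and $C\parlaq{pq}$, the delta identity from adding the $\gl$-eigenfunctions $\gh\parlaq{p}$, and the key reciprocity $\ndsec G\parlam(x,p)=-\gh\parlaq{p}(x)$ obtained via Gauss--Green exactly as in \eqref{eqn:normal-at-p-of-G}, then cancelled against the correction term using Lemma~\ref{thm:the-Luke-matrix}. Your explicit restriction to $x\notin V_0$ (where $G\parlam(x,\cdot)\equiv 0$) is a minor extra precaution that the paper leaves implicit, not a difference in method.
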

\begin{proof}
  The symmetry of $G\parlaq{N}(x,y)$ is immediate from the symmetry of $G\parlam(x,y)$ and of $C\parlaq{pq}$.  Both $G\parlam(x,y)$ and $\gh\parlaq{p}(y)$ are in $\dom_{\sM}(\Lap_{y})$ and $(\gl-\Lap_{y}) \gh\parlaq{p}(y) =0$ on $X \less V_0$ so $(\gl-\Lap_{y}) G\parlaq{N}(x,y)=(\gl-\Lap_{y})G\parlam(x,y) = \gd_x(y)$ on $X\less V_0$.

  It remains to prove the assertion about the normal derivatives. 
  We will use the notation $\ndsec G\parlam$ for the normal derivative of $G\parlam(x,y)$ with respect to its second variable. Since $G\parlam(x,y) \in \dom_{\sM}(\Lap_y)$ it has a normal derivative at $p \in V_0$, and by the Gauss-Green formula,
  \linenopar
  \begin{align}\label{eqn:normal-at-p-of-G}
    \ndsec G\parlam(x,p)
    &= \sum_{s\in V_0} \left( \ndsec G\parlam(x,s) \gh\parlaq{p}(s) - G\parlam(x,s) \partial_n \gh\parlaq{p}(s) \right) \notag \\
    &= \int_{X} \left( \left(\Lap_{s} G\parlam(x,s)\right) \gh\parlaq{p}(s) - G\parlam(x,s)
    \left(\Lap_{s}\gh\parlaq{p}(s) \right)\right) \, d\gm(s) \notag \\
    &= \int_{X} \left(\Lap_{s} - \gl\right) G\parlam(x,s) \gh\parlaq{p}(s) \, d\gm(s) \notag \\
    &= -\gh\parlaq{p}(x)
  \end{align}
  where at the first step we used that $G\parlam(x,s)=0$ for $s \in V_0$ and at the last step we used $\left(\Lap_{s}-\gl\right) G\parlam(x,s)=-\delta_{x}(s)$ as a measure.
  %\linenopar
  %\begin{align*}
  %  G\parlaq{N}(x,y)
  %  &= G\parlam(x,y) + \sum_{p,q \in V_0} C\parlaq{pq}  %\gh\parlaq{p}(x) \gh\parlaq{q}(y)\\
  %  &= G\parlam(x,y) - \sum_{p \in V_0} \ndsec G\parlam(x,p) %\sum_{q\in V_0} C\parlaq{pq} \gh\parlaq{q}(y).
  %\end{align*}
  %\eqref{eqn:normal-at-p-of-G} gives
  It follows that at each $p \in V_0$, the normal derivative of \eqref{eqn:Neumann-resolvent-formula} vanishes:
  \linenopar
  \begin{align*}
    \ndsec G\parlaq{N}(x,p)
    &= -\gh\parlaq{p}(x) + \ndsec \negsp[6] \sum_{q,s\in V_0} C\parlaq{qs} \gh\parlaq{q}(x) \gh\parlaq{s}(p)
      &&\text{by \eqref{eqn:normal-at-p-of-G}} \\
    &= -\gh\parlaq{p}(x) + \sum_{q\in V_0} \gd_{qx} \gh\parlaq{q}(x)
      &&\text{by Lemma~\ref{thm:the-Luke-matrix}} \\
    &= 0. &&\qedhere
  \end{align*}
  %so has vanishing normal derivatives by Lemma~\ref{thm:the-Luke-matrix}.
\end{proof}

\section{Example: the Sierpinski gasket $S\negsp[2]G$}
\label{sec:example-sg}

Recall the harmonic extension algorithm as described in \cite[\S1.3]{Str06}: if the values of a function $u$ are specified at the points of $V_0$ and written as a vector
\linenopar
\begin{align*}
  u \evald{V_0}
  = \left[\begin{array}{c} u(p_0) \\ u(p_1) \\ u(p_2) \end{array}\right],
\end{align*}
then the harmonic extension of $u$ to $F_i(V_0)$ (the boundary points of the 1-cell $F_i(SG)$) is given by
\linenopar
\begin{align*}
  u \evald{F_i V_0}
  = A_i u \evald{V_0}
  = \left[\begin{array}{c} u(F_i p_0) \\ u(F_i p_1) \\ u(F_i p_2) \end{array}\right],
\end{align*}
where
\linenopar
\begin{align*}
  A_0 = \frac15
    \left[\begin{array}{ccc}
      5 & 0 & 0 \\
      2 & 2 & 1 \\
      2 & 1 & 2 \\
    \end{array}\right], \q
  A_1 = \frac15
    \left[\begin{array}{ccc}
      2 & 2 & 1 \\
      0 & 5 & 0 \\
      1 & 2 & 2 \\
    \end{array}\right], \q \text{and} \q
  A_2 = \frac15
    \left[\begin{array}{ccc}
      2 & 1 & 2 \\
      1 & 2 & 2 \\
      0 & 0 & 5 \\
    \end{array}\right]
\end{align*}
are the harmonic extension matrices. In general, $u \evald{F_\word V_0} = A_\word u \evald{V_0}$, where $A_\word = A_{\word_m} \cdots  A_{\word_1}$. Thus, the harmonic extension matrices allow one to construct a harmonic function with specified boundary values. Similarly, spectral decimation provides matrices which allow one to construct an eigenfunction with specified boundary values. For example,
\linenopar
\begin{align}\label{eqn:eigenfunction-extension-matrix}
  A_{0}(\gl)
  = \frac{1}{(5-\gl)(2-\gl)}
    \left[\begin{array}{ccc}
      (5-\gl)(2-\gl) & 0 & 0 \\
      (4-\gl) & (4-\gl) & 2 \\
      (4-\gl) & 2 & (4-\gl)
    \end{array}\right]
\end{align}
is the analogue of $A_0 = A_0(0)$. By the usual caveats of spectral decimation, these extension matrices can only be used when \gl is not a (Dirichlet) eigenvalue.

\begin{remark}[Spectral decimation]\label{rem:spectral-decimation}
  A very brief outline of the method of spectral decimation is as follows.
  \begin{enumerate}[(1)]
    \item Begin on some level $m=m_0$ with $u_m$ and $\gl_m$ that satisfy $-\Lap_m u_m = \gl_m u_m$ on $V_m \less V_0$.
    \item Extend $u_m$ to a function $u_m$ on $V_{m+1} \less V_0$ by comparing the eigenvalue equations from each level.
    \item Obtain a collection of extension matrices, one for each mapping in the original IFS, and a rational function $\varrho$ which relates the eigenvalues on one level to the eigenvalues on the previous level by $\varrho(\gl_m) = \gl_{m-1}$.
    \item Inductively construct a sequence $\{\gl_{m_o}, \gl_{m_o+1}, \gl_{m_o+2}, \dots\}$ by choosing $\gl_{m+1}$ from the set $\varrho^{-1}(\gl_{m})$ for each $m$.
  \end{enumerate}
  For every such sequence that converges, $\ga \lim_{m \to \iy} \gb^m \gl_m$ will be an eigenvalue of \Lap on $X$, where \ga and \gb are constants specific to $X$. For the Sierpinski Gasket $S\negsp[2]G$, $\ga = \frac32$ and $\gb = 3 \frac53 = 5$. Note that the calculations in (2)--(3) will forbid certain choices, so some care must be taken in the construction of $\{\gl_m\}$. See \cite[\S3]{Str06} for more details.
\end{remark}

To obtain the numbers $B\parlaq{pq}$ (appearing in \eqref{eqn:Bobmatrix-preview}) for the Sierpinski Gasket $S\negsp[2]G$, we find the normal derivatives of the eigenfunction that has boundary values $(1,0,0)$, as computed at each point of $V_0$. If $(\gl\id-\Lap)u=0$ but $u$ is not a Dirichlet eigenfunction, then consider $u$ on $F_{0}^{m}(V_{0})$. By spectral decimation, this is given by
\linenopar
\begin{equation*}
  u\evald{F_{0}^{m}(V_{0})}
  = A_{0}(\gl_m) \dotsm A_{0}(\gl_{1}) u\evald{V_{0}}
\end{equation*}
where the matrix $A_0(\gl)$ is as in \eqref{eqn:eigenfunction-extension-matrix}. We actually only need the values of the \emph{normal derivative}
\linenopar
\begin{equation}\label{eqn:def:normal-derivative-SG}
  \dn u(q_i)
  = \lim_{m \to \iy} \left( \frac53 \right)^{m} 
    \left(2u(q_i) - u(F_i^m q_{i-1}) - u(F_i^m q_{i+1}) \right),
    \qq q_i \in V_0.
\end{equation}
The factor $\frac53$ arises here because $r_j = \frac35$ for each $j$ in \eqref{eqn:def:energy-form} for $S\negsp[2]G$; see also \eqref{eqn:def:normal-derivative}. The calculation of $r_j = \frac35$ is given in \cite[\S1.3]{Str06}.

It is extremely easy to compute the normal derivatives of a harmonic function: one does not need to compute the limit, as all terms of the sequence are equal; see \cite[(2.3.9)]{Str06}. Therefore, our approach is to obtain a harmonic function which coincides with $u$ on $F_0^m(V_0)$. The limit of the normal derivatives of these harmonic functions will be the normal derivative of $u$. An alternative interpretation would be to interpret the harmonic functions on $S\negsp[2]G$ as the analogue of the linear functions on $I$. Consequently, the tangent to a point of $S\negsp[2]G$ should be given by a harmonic function plus a constant, provided the tangent exists. This is the motivating idea of \cite{DRS07}.

Multiplication by $A_0^{-m}$ allows one to find the required harmonic function at stage $m$; rewriting the normal derivative \eqref{eqn:def:normal-derivative-SG} in vector notation, one has
\linenopar
\begin{align*}
  \left( \frac53 \right)^{m} (2,-1,-1) \cdot u\evald{F_{0}^{m}(V_{0})}
  &= (2,-1,-1) \cdot A_{0}^{-m} u\evald{F_{0}^{m}(V_{0})} \\
  &= (2,-1,-1) \cdot A_{0}^{-m} A_{0}(\gl_m) \dotsm A_{0}(\gl_{1}) u\evald{V_{0}}.
\end{align*}
It therefore suffices to understand the limit $\lim_{m}  A_{0}^{-m} A_{0}(\gl_m) \dotsm A_{0}(\gl_{1})$; this was computed in \cite{DRS07}. The following theorem is the main result of \cite{DRS07}, taken with $m_0=0$.

\begin{theorem}\label{thm:tangents-explicitformula}
Let $\ga=(0,1,1)^{T}$, $\beta=(0,1,-1)^{T}$, $\gg_{m}=(4,4-\gl_m,4-\gl_m)^{T}$.
If neither of the values $2$ or $5$ occur in the sequence $\gl_m$, then
\linenopar
\begin{gather*}
    \lim_{k\rightarrow\iy}
        A_{0}^{-k} \cdot A_{0}(\gl_{0+k})\dotsm A_{0}(\gl_{0+1})\, \ga
    = \frac{ 4\gl } {3\cdot5^{0}\gl_{0}(2-\gl_{0+1})}
        \prod_{j=2}^{\iy} \left( 1 -\frac{\gl_{0+j}}{3} \right)\, \ga \\
    \lim_{k\rightarrow\iy}
        A_{0}^{-k} \cdot A_{0}(\gl_{0+k})\dotsm A_{0}(\gl_{0+1})\, \beta
    = \frac{2\gl}{3\cdot5^{0}\gl_{0}}\, \beta\\
    \lim_{k\rightarrow\iy}
        A_{0}^{-k} \cdot A_{0}(\gl_{0+k})\dotsm A_{0}(\gl_{0+1})\, \gg_{0}
    = (4,4,4)^{T}
    \end{gather*}
\end{theorem}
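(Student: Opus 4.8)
The plan is to reduce the entire statement to scalar computations by exploiting the fact that the eigenfunction extension matrices $A_0(\gl)$ of \eqref{eqn:eigenfunction-extension-matrix} all share two eigenvectors. First I would record, by a one-line matrix multiplication, the identities
\[
  A_0(\gl)\,\beta = \tfrac{1}{5-\gl}\,\beta, \qquad A_0(\gl)\,\ga = \tfrac{6-\gl}{(5-\gl)(2-\gl)}\,\ga, \qquad A_0(\gl)\,e_0 = e_0 + \tfrac{4-\gl}{(5-\gl)(2-\gl)}\,\ga,
\]
where $e_0=(1,0,0)^{T}$ and $\{e_0,\ga,\beta\}$ is a basis of $\bR^{3}$. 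In particular $\ga$ and $\beta$ are eigenvectors of $A_0=A_0(0)$ with eigenvalues $\tfrac35$ and $\tfrac15$, so $A_0^{-k}\ga=(5/3)^{k}\ga$ and $A_0^{-k}\beta=5^{k}\beta$, while $(4,4,4)^{T}$ is the eigenvalue-$1$ eigenvector of $A_0$ and $\gg_m=(4,4,4)^{T}-\gl_m\ga$. I would also record the $S\negsp[2]G$ spectral decimation relation $\gl_{m-1}=\gl_m(5-\gl_m)$ from Remark~\ref{rem:spectral-decimation} and its algebraic consequence $6-\gl_{m-1}=(2-\gl_m)(3-\gl_m)$.

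For $\beta$: since $\beta$ is a common eigenvector, $A_0^{-k}A_0(\gl_k)\cdots A_0(\gl_1)\,\beta=\bigl(\prod_{j=1}^{k}\tfrac{5}{5-\gl_j}\bigr)\beta$, and using $5-\gl_j=\gl_{j-1}/\gl_j$ this product telescopes to $5^{k}\gl_k/\gl_0$, which tends to $\tfrac{2\gl}{3\gl_0}$ by the normalization $\gl=\tfrac32\lim_k 5^{k}\gl_k$ valid for $S\negsp[2]G$ (Remark~\ref{rem:spectral-decimation}). For $\ga$: the same reasoning gives $(5/3)^{k}\bigl(\prod_{j=1}^{k}\tfrac{6-\gl_j}{(5-\gl_j)(2-\gl_j)}\bigr)\ga$; substituting $5-\gl_j=\gl_{j-1}/\gl_j$ and $2-\gl_j=(6-\gl_{j-1})/(3-\gl_j)$ rewrites the product as $\prod_{j=1}^{k}\tfrac{(6-\gl_j)(3-\gl_j)\gl_j}{(6-\gl_{j-1})\gl_{j-1}}$, whose $(6-\gl_\bullet)$ and $\gl_\bullet$ factors telescope; absorbing $(5/3)^{k}$ into the surviving $\prod(3-\gl_j)$ leaves $\tfrac{6-\gl_k}{(6-\gl_0)\gl_0}\cdot 5^{k}\gl_k\cdot\prod_{j=1}^{k}\bigl(1-\tfrac{\gl_j}{3}\bigr)$, and $k\to\iy$ (so $\gl_k\to0$, $5^{k}\gl_k\to\tfrac{2\gl}{3}$) gives $\tfrac{4\gl}{(6-\gl_0)\gl_0}\prod_{j=1}^{\iy}\bigl(1-\tfrac{\gl_j}{3}\bigr)$, which is the claimed expression once $6-\gl_0=(2-\gl_1)(3-\gl_1)$ is used to pull out the $j=1$ factor. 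For $\gg_0$: a short polynomial identity equivalent to $\gl_{j-1}=\gl_j(5-\gl_j)$ shows $A_0(\gl_j)\gg_{j-1}=\gg_j$, so $A_0(\gl_k)\cdots A_0(\gl_1)\gg_0=\gg_k$; since $\gg_k-(4,4,4)^{T}=-\gl_k\ga$ and $(4,4,4)^{T}$ is fixed by $A_0^{-1}$, we get $A_0^{-k}\gg_k=(4,4,4)^{T}-(5/3)^{k}\gl_k\,\ga\to(4,4,4)^{T}$ because $(5/3)^{k}\gl_k=5^{k}\gl_k/3^{k}\to0$.

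The hypothesis that none of the $\gl_m$ equals $2$ or $5$ is exactly what I would use to keep every $A_0(\gl_m)$ defined (its entries carry the factor $\bigl((5-\gl_m)(2-\gl_m)\bigr)^{-1}$) and to keep all the telescoping denominators nonzero; convergence of the infinite products will be automatic, since $5^{m}\gl_m$ bounded forces $\gl_m=O(5^{-m})$ and hence $\sum_m|\gl_m|<\iy$. I expect the main obstacle to be the $\ga$ case: one has to spot the quadratic identity $6-\gl_{j-1}=(2-\gl_j)(3-\gl_j)$ to obtain any telescoping at all, and then carefully resolve the $0\cdot\iy$ competition between $(5/3)^{k}$ and $\prod_{j\le k}(3-\gl_j)$ so that precisely the product $\prod(1-\gl_j/3)$ survives; a little care is also needed in the bookkeeping at $j=1$ (equivalently, in the harmless case $\gl_j=3$, where $6-\gl_{j-1}$ vanishes, one should group the product from $j=2$ as in the stated formula, or argue by continuity in $\gl$). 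Finally I would double-check that $\lim_m 5^{m}\gl_m=\tfrac{2\gl}{3}$ is consistent with the $S\negsp[2]G$ decimation constants $\tfrac32$ and $5$ of Remark~\ref{rem:spectral-decimation}, since this constant feeds into all three limits.
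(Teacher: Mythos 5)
Your proposal is correct and takes essentially the same route as the paper's (the argument from \cite{DRS07} reproduced in the source): identify $\ga$ and $\beta$ as common eigenvectors of the $A_0(\gl)$, telescope the scalar products using $\gl_{m-1}=\gl_m(5-\gl_m)$ and $6-\gl_m=(2-\gl_{m+1})(3-\gl_{m+1})$, invoke $5^m\gl_m\to 2\gl/3$, and use $A_0(\gl_{m+1})\gg_m=\gg_{m+1}$ together with $\gg_k=(4,4,4)^T-\gl_k\ga$ for the third limit. The only cosmetic difference is that you telescope via the downward substitution $2-\gl_j=(6-\gl_{j-1})/(3-\gl_j)$, which needs the extra (harmless, and acknowledged) caveat $\gl_{j-1}\neq 6$, whereas the paper substitutes upward, $6-\gl_j=(3-\gl_{j+1})(2-\gl_{j+1})$, and so stays strictly within the stated hypothesis $\gl_m\neq 2,5$.
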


In particular, this can be used to get the desired normal derivative.  We know that all we need do is compute
\linenopar
\begin{equation}\label{eqn:normal-derivative-vectorform}
  (2,-1,-1) \cdot \left( \lim_{m} A_{0}^{-m} A_{0}(\gl_m) \dotsm A_{0}(\gl_{1}) \right)
  u\evald{V_{0}}.
\end{equation}
The boundary data $u\evald{V_{0}}$ is be taken to be $(1,0,0)$ when computing the normal derivative at the point $p$ where $u(p)=1$, and $(0,1,0)$ at a point where $u(p)=0$ (these two points are the same by symmetry). \vstr[2.2] Writing
\linenopar
\begin{equation*}
  \left[\begin{array}{ccc} 1\\0\\0 \end{array}\right]
  = \frac{1}{4} \left[\begin{array}{ccc} 4\\4-\gl_{0}\\4-\gl_{0} \end{array}\right]
      - \frac{4-\gl_{0}}{4}\left[\begin{array}{ccc} 0\\1\\1\end{array}\right],
  \qq\text{and} \qq
  \left[\begin{array}{ccc} 0\\1\\0 \end{array}\right]
  =\frac{1}{2} \left[\begin{array}{ccc} 0\\1\\1\end{array}\right] +
      \frac{1}{2} \left[\begin{array}{ccc} 0\\1\\-1\end{array}\right],
\end{equation*}
we find that
\linenopar
\begin{align*}
  \lim_{m} A_{0}^{-m} & A_{0}(\gl_m) \dotsm A_{0}(\gl_{1})
    \left[\begin{array}{ccc} 1 \\ 0 \\ 0 \end{array}\right] \\
  &= \left[\begin{array}{ccc} 1 \\ 1 \\ 1 \end{array}\right]
    - \frac{4-\gl_{0}}{4} \frac{4\gl}{3\gl_0(2-\gl_1)} \prod_{j=2}^{\iy}
    \left(1 - \frac{\gl_j}{3}\right)
    \left[\begin{array}{ccc} 0 \\ 1 \\ 1 \end{array}\right]
\end{align*}
and by \eqref{eqn:normal-derivative-vectorform}, the normal derivative is
\linenopar
\begin{equation*}
  \frac{2(4-\gl_0) \gl} {3\gl_0(2-\gl_1)}
  \prod_{j=2}^{\iy} \left(1 -\frac{\gl_j}{3}\right).
\end{equation*}

The normal derivative at the other point is computed by first finding
\linenopar
\begin{align*}
  \lim_{m} A_{0}^{-m} A_{0}(\gl_m)&\dotsm A_{0}(\gl_{1}) \left[\begin{array}{ccc} 0\\1\\0 \end{array}\right] \\
  &= \frac{1}{2} \frac{ 4\gl } {3\gl_{0}(2-\gl_{1})}
    \prod_{j=2}^{\iy} \left( 1 -\frac{\gl_{j}}{3} \right) \left[\begin{array}{ccc} 0\\1\\1\end{array}\right]
    + \frac{1}{2} \frac{2\gl}{3\gl_{0}} \left[\begin{array}{ccc} 0\\1\\-1\end{array}\right]
\end{align*}
and then taking the inner product with $(2,-1,-1)$, which cancels the second vector to leave
\linenopar
\begin{equation*}
	\frac{ - 4\gl } {3\gl_{0}(2-\gl_{1})} \prod_{j=2}^{\iy} \left( 1 -\frac{\gl_{j}}{3} \right).
	\end{equation*}

It seems logical at this point to define a function
\linenopar
\begin{equation}\label{def:tau}
	\gt(\gl) = \frac{4\gl } {3\gl_{0}(2-\gl_{1})} \prod_{j=2}^{\iy} \left( 1 -\frac{\gl_{j}}{3} \right)
	\end{equation}
and to write the normal derivative at the point where the $1$ occurs as $(4-\gl_{0})\gt(\gl)/2$ and that
at the point where the $0$ occurs as $-\gt(\gl)$.  We note that for a non-Dirichlet eigenfunction, none of the values $2,5,6$ occur in $\gl_m$ for $m \geq 1$ so the term $(2-\gl_{1})$ in the denominator cannot be zero.  It follows that $3$ does not occur for $m \geq 2$ and therefore that $\gt(\gl) \neq 0$ in this case.  
%\marginpar{Why is it ``easily seen'' that $\gt(\gl)$ can be extended continuously to $\gl = 0$?}
An exception to our formula as currently written occurs when $\gl=0$, because then also $\gl_{0}=0$, but the function $\gt(\gl)$ is easily shown to have a continuous extension to $\gl=0$ with $\gt(0)=1$; cf. \cite{DRS07}. With this correction, our formula is also valid for the harmonic case.

It is now easy to write the entries of the matrix $B\parlaq{pq}$ appearing in \eqref{eqn:Bobmatrix-preview}. The term $B\parlaq{pp}$ has two copies of the normal derivative $(4-\gl_{0})\gt(\gl)/2$, and the term $B\parlaq{pq}$ has a single copy of $-\gt(\gl)$ at each $q\in V_{1}$ that is not equal to $p$.  Both are on $1$-cells rather than the whole of $S\negsp[2]G$, so there is an extra factor $5/3$ in their normal derivatives.  As a result, the matrix is
\linenopar
\begin{equation*}
	B = \frac53\left[\begin{array}{ccc}
		(4-\gl_{0})\gt(\gl) & -\gt(\gl) & -\gt(\gl)\\
		-\gt(\gl) & (4-\gl_{0})\gt(\gl) & -\gt(\gl)\\
		-\gt(\gl)& -\gt(\gl) & (4-\gl_{0})\gt(\gl)
		\end{array}\right]
	\end{equation*}
and we should invert this to get the matrix $G_{pq}$ for the Green's function. Since
\linenopar
\begin{equation*}
	\det \left[\begin{array}{ccc}
		a & b & b \\
		b & a & b \\
		b & b & a
	\end{array}\right]
  = (a-b)^{2}(a+2b),
\end{equation*}
the matrix $B$ is invertible iff $\gl_0 \neq 2,5$, in which case
\linenopar
\begin{equation*}
	G\parlam
	= \frac{3}{5(5-\gl_{0})(2-\gl_{0}) \gt(\gl)}
		\left[\begin{array}{ccc}
		(3-\gl_{0}) & 1 & 1\\
		1& (3-\gl_{0})& 1\\
		1& 1& (3-\gl_{0})
		\end{array}\right].
	\end{equation*}
Note that this is consistent with the harmonic case where $\gl_0 = 0$ and $\gt(0)=1$ gives factors $9/50$ for $G\parlaq{pp}$ and $3/50$ for $G\parlaq{pq}$ with $p\neq q$; see \cite[(2.6.25)]{Str06}.

      \begin{comment}
        \pgap[1cm]
        Excised: We use spectral decimation to characterize the space of functions $u$ that satisfy $(\Lap+\gl)u=0$ on $SG\setminus\{p\}$, where $p\in V_{0}$ is a single boundary point. If $u$ is such a function, then for a sufficiently large $0$ there is a sequence $\{\gl_m\}_{m\geq 0}$ with $\gl_{m-1}=\gl_m(5-\gl_m)$, such that
        $(3/2)\lim_{m} 5^{m}\gl_m=\gl$.   The function $u\evald{V_{m}}$ satisfies
        $(\Lap_{m}-\gl_m)u\evald{V_{m}}=0$ for all $m$ and all points in $V_{m}$, except those that neighbor $p$. Moreover, the value of $u$ at any $y\in V_{m+1}$ which is not a neighbor of $p$, is given by
        \begin{equation*}
            u(y) = \frac{(4-\gl_{m+1}) (u(x)+x(x')) +2u(x'') } {(2-\gl_{m+1})(5-\gl_{m+1}) }
            \end{equation*}
        where $x,x'\in V_{m}$ neighbor $y$ in $V_{m+1}$ and $x''\in V_{m}$ is the vertex opposite $y$.
        \pgap[1cm]
      \end{comment}

\section{Example: $S\negsp[2]G_3$, a variant of the Sierpinski gasket}
\label{sec:example-sg3}

\subsection{The Laplacian on $S\negsp[2]G_3$}

\begin{figure}%[b]
  \centering
  \scalebox{0.80}{\includegraphics{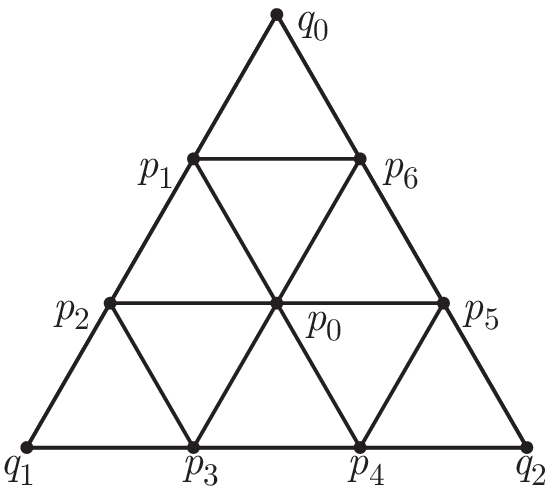}}
  \caption{The 1-cells of $S\negsp[2]G_3$.}
  \label{fig:SG3}
\end{figure}

The fractal $S\negsp[2]G_3$ is obtained from an IFS consisting of 6 contraction mappings, each with scaling ratio $\frac13$, as indicated in Figure~\ref{fig:SG3}.
The details of the spectral decimation method for $S\negsp[2]G_3$ have been worked out independently in \cite{BCDEHKMST07,DrSt07,Zhou09a,Zhou09b}.
Note that $p_0$ is contained in three 1-cells of $S\negsp[2]G_3$, in contrast to each of the other points $p_i$ of $V_1 \less V_0$, which are contained in two. For this reason, we define the graph Laplacian on $S\negsp[2]G_3$ as
\linenopar
\begin{equation}\label{eqn:def-Laplacian-Gamma-m}
  \Lap_m u(x) = \frac{1}{deg(x)} \sum_{y \nbr[m] x} (u(y)-u(x)),
\end{equation}
where $deg(x)$ is the number of $m$-cells containing $x$. From \cite[\S4.4]{Str06}, we have
\linenopar
\begin{equation}\label{eqn:def-relationship-Laplacian-SG3-Gamma-m}
  \Lap_\gm u(x)
  = \lim_{m\to \infty} r^{-m} \left( \int_K h_x^{(m)} \, d \gm \right)^{-1} deg(x) \Lap_m u(x).
\end{equation}
The renormalization constant $r$ will be computed in \S\ref{sec:Computation-of-the-normal-derivatives}.

Let $p_x$ denote a vertex where $u$ takes the value $x$ as
depicted in Figure \ref{fig:SG3-100}, then by
(\ref{eqn:def-Laplacian-Gamma-m}), the symmetric eigenvalue
equations on $V_m$ are
\linenopar
\begin{align*}
  \gD_m u(p_x) &= \tfrac14 \left[(1-x) + (x-x) + (w-x) + (y-x)\right] = -\gl_m' x \\
  \gD_m u(p_y) &= \tfrac14 \left[(x-y) + (w-y) + (z-y) + (0-y)\right] = -\gl_m' y \\
  \gD_m u(p_z) &= \tfrac14 \left[(y-z) + (w-z) + (z-z) + (0-z)\right] = -\gl_m' z \\
  \gD_m u(p_w) &= \tfrac16 \left[2(x-w) + 2(y-w) + 2(z-w)\right] = -\gl_m' w,
\end{align*}
which can be rewritten, using $\gl_m = 4\gl_m'$, as
\linenopar
\begin{align*}
  (4-\gl_m)x &= 1 + x + y + w
  && (4-\gl_m)y = x + z + w \\
  (4-\gl_m)z &= y + z + w
  && (4-\gl_m)w = \tfrac43(x + y + z).
\end{align*}
For now, we suppress the dependence on $m$ for convenience and denote $\gl = \gl_m$. Solving for \gl, we obtain
\linenopar
\begin{subequations} \label{eqn:def-alpha-beta-gamma-phi}
\begin{align}
  x = \ga(\gl) %\frac13 \left[\frac1{5-\gl} + \frac3{3-\gl} + \frac{8-2\gl}{4-6\gl + \gl^2}\right]
     &:= (96-109\gl+33\gl^2-3\gl^3) /\gf(\gl), \\
  y = \gb(\gl) %\frac{16 - 3\gl}{3(5-\gl)(4-6\gl + \gl^2)}
     &:= (16 - 3\gl)(3-\gl) /\gf(\gl), \\
  z = \gg(\gl) %\frac{36-7\gl}{3(5-\gl)(3-\gl)(4-6\gl + \gl^2)}
     &:= (36-7\gl) /\gf(\gl), \\
  w = \gr(\gl) %\frac43 \cdot \frac1{4-6\gl + \gl^2}
     &:= 4 (5-\gl)(3-\gl) /\gf(\gl), \\
  \text{where}\q\gf(\gl) &:= 3(5-\gl)(3-\gl)(4-6\gl + \gl^2),
\end{align}
\end{subequations}
and we see that the forbidden eigenvalues are $3,5,3 \pm \sqrt 5$.

\begin{figure}%[b]
  \centering
  \scalebox{0.80}{\includegraphics{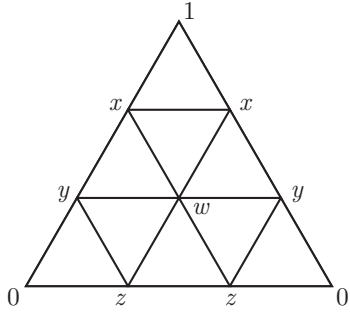}}
  \caption{The eigenfunction extension on one $(m-1)$-cell of $S\negsp[2]G_3$ to $m$-cells. The values on $(m-1)$-cell are $1,0,0$.}
  \label{fig:SG3-100}
\end{figure}

\begin{figure}%[b]
  \centering
  \scalebox{0.80}{\includegraphics{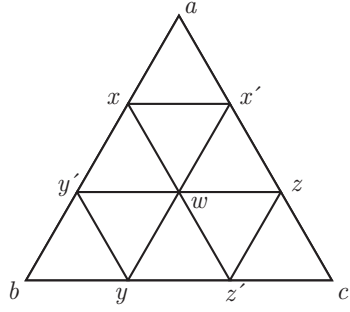}}
  \caption{The labeling for a general eigenfunction extension, from one $(m-1)$-cell to six $m$-cells. The values on the boundary of the $(m-1)$-cell are $a, b, c$.}
  \label{fig:SG3-abc}
\end{figure}

For a general function on $S\negsp[2]G_3$, we extend the eigenfunction using the labeling indicated in Figure \ref{fig:SG3-abc}, as follows:
\linenopar
\begin{align}
  x &= a \ga(\gl) + b \gb(\gl) + c \gg(\gl)
  & x' &= a \ga(\gl) + c \gb(\gl) + b \gg(\gl)
    \label{eqn:general-eigf-extension-system} \\
  y &= b \ga(\gl) + c \gb(\gl) + a \gg(\gl)
  & y' &= b \ga(\gl) + a \gb(\gl) + c \gg(\gl) \notag \\
  z &= c \ga(\gl) + a \gb(\gl) + b \gg(\gl)
  & z' &= c \ga(\gl) + b \gb(\gl) + a \gg(\gl) \notag \\
  w &= (a + b + c) \gr(\gl). \notag
\end{align}

The eigenfunction extension matrix for $S\negsp[2]G_3$  corresponding to
$F_0$ is
\linenopar
\begin{equation*}
  A_0(\gl) =
  \left[\begin{array}{ccc}
    1 & 0 & 0 \\
    \ga(\gl) & \gb(\gl) & \gg(\gl) \\
    \ga(\gl) & \gg(\gl) & \gb(\gl)
  \end{array}\right],
\end{equation*}
where we have $\ga(\gl),\gb(\gl),\gg(\gl),\gf(\gl)$ as before,
that is $A_0(\gl) u|_{V_0} = u |_{F_0 V_0}$.

\subsection{Eigenfunctions of the Laplacian on $S\negsp[2]G_3$}

\begin{figure}%[b]
  \centering
  \scalebox{0.80}{\includegraphics{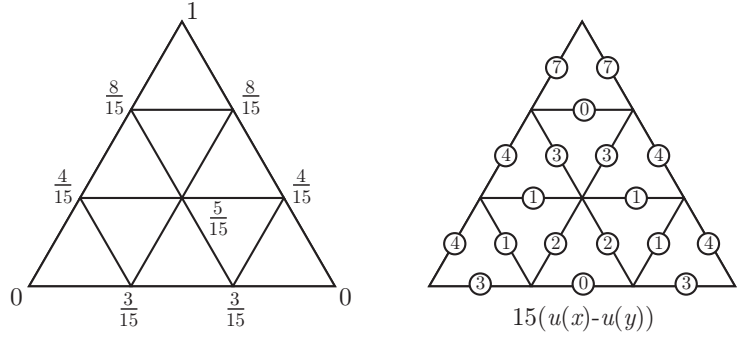}}
  \caption{The harmonic extension of $u$ on $S\negsp[2]G_3$, where $u|_{V_0} = [1,0,0]$.}
  \label{fig:SG3-100-harmonic}
\end{figure}

Let $u$ be a function taking values $1,0,0$ on $V_0$. The harmonic extension $\tilde u$ on $V_1$ corresponds to taking $\gl=0$ in the system (\ref{eqn:def-alpha-beta-gamma-phi}) above, so that
\linenopar
\begin{align*}
  x = \frac{8}{15},
  \q y=\frac{4}{15},
  \q z=\frac{3}{15},
  \q w=\frac{5}{15},
\end{align*}
and we have Figure \ref{fig:SG3-100-harmonic}. Following \cite[\S1.3]{Str06}, the energy renormalization constant computed by
\linenopar
\begin{align*}
  E_1(\tilde u)
  &= \left(\frac{1}{15}\right)^2 \left(4\cdot 1^2 + 2\cdot2^2
   + 4\cdot3^2 + 4\cdot4^2 + 2\cdot7^2\right)
   = \frac{14}{15}.
\end{align*}
Since $E_0(u) = 1 + 1 = 2$,
\linenopar
\begin{align}\label{eqn:energy-renormalization}
  2 = E_0(u) = r^{-1} E_1(\tilde u) = r^{-1} \frac{14}{15} \q\implies\q r=\frac7{15}.
\end{align}
Thus, the normal derivatives on $S\negsp[2]G_3$ are computed by
\linenopar
\begin{equation}\label{eqn:normal-deriv-formula-for-SG3}
  \dn u(p)
    = \lim_{m \to \iy} \left(\frac{15}7\right)^m
      \sum_{p \nbr[m] y} (u(p)-u(y)).
\end{equation}

\begin{theorem}\label{thm:SG3-Laplacian}
  The pointwise formulation of the Laplacian on $S\negsp[2]G_3$ is
  \linenopar
  \begin{equation}\label{eqn:thm:SG3-Laplacian}
    \gD_\gm u(x) = 6 \lim_{m \to \iy} \left(\frac{90}7\right)^m \gD_m u(x_m),
  \end{equation}
  where $\{x_m\}$ is any sequence with $\lim x_m = x$ and $x_m \in V_m$.
\end{theorem}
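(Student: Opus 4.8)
The plan is to derive \eqref{eqn:thm:SG3-Laplacian} from the general pointwise formula \eqref{eqn:Laplacian-pointwise} by identifying the three ingredients that enter it: the normalizing integral $\int_X h_x^{(m)}\,d\gm$, the graph Laplacian $\Lap_m$, and the renormalization constant $r$. First I would specialize \eqref{eqn:def-relationship-Laplacian-SG3-Gamma-m} to the standard self-similar measure on $S\negsp[2]G_3$, for which each $1$-cell has measure $\gm_j = \tfrac16$; since $h_x^{(m)}$ is supported on the $m$-cells containing $x$ and is harmonic there, the standard computation (cf. \cite[\S2.1--\S2.2]{Str06}) gives $\int_X h_x^{(m)}\,d\gm = c\cdot 6^{-m}\,deg(x)$ for an explicit constant $c$ independent of $x$ (the $deg(x)$ records how many $m$-cells meet at $x$, matching the $deg(x)$ already appearing in \eqref{eqn:def-relationship-Laplacian-SG3-Gamma-m}). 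I would pin down $c$ by evaluating on a single $1$-cell using the harmonic extension data in Figure~\ref{fig:SG3-100-harmonic}.

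Next I would assemble the scalar constant. We have $r = \tfrac7{15}$ from \eqref{eqn:energy-renormalization}, and the measure scaling contributes $\gm_j^{-1} = 6$ per level, so $r^{-m}\bigl(\int_X h_x^{(m)}\,d\gm\bigr)^{-1} = \bigl(\tfrac{15}{7}\bigr)^m \cdot c^{-1} 6^{m} deg(x)^{-1}$. Combining with the $deg(x)$ factor in \eqref{eqn:def-relationship-Laplacian-SG3-Gamma-m} cancels the degree, leaving $\Lap_\gm u(x) = c^{-1}\lim_{m\to\iy}\bigl(\tfrac{90}{7}\bigr)^m \Lap_m u(x)$, since $6\cdot\tfrac{15}{7} = \tfrac{90}{7}$. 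It then remains to check that $c^{-1} = 6$, i.e. $c = \tfrac16$; this follows from the normalization $\gm(X) = 1$ together with the self-similar identity for $h_x^{(m)}$, and can be verified directly from the harmonic tent function on a $1$-cell of $S\negsp[2]G_3$. Finally, I would invoke the uniform-limit statement in \eqref{eqn:Laplacian-pointwise} — which allows $x$ to be replaced by any sequence $x_m \in V_m$ with $x_m \to x$ — to obtain the stated form with $\Lap_m u(x_m)$.

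The main obstacle I expect is the bookkeeping in the normalizing integral $\int_X h_x^{(m)}\,d\gm$: one must be careful that $h_x^{(m)}$ is the piecewise \emph{harmonic} spline (not a piecewise $\gl$-eigenfunction) and that its integral over the $6$ cells at the central vertex $p_0$ versus the $2$ cells at a generic junction is handled by the single factor $deg(x)$, with the per-cell contribution being the \emph{same} harmonic quantity in both cases. Once the identity $\int_X h_x^{(1)}\,d\gm = \tfrac16 deg(x)\cdot(\text{universal harmonic integral})$ is verified on one cell via \eqref{eqn:general-eigf-extension-system} with $\gl = 0$, the self-similar scaling \eqref{eqn:def:measure} propagates it to all $m$, and the rest is the arithmetic $6\cdot\tfrac{15}{7} = \tfrac{90}{7}$ together with the constant $6 = c^{-1}$ recorded in \eqref{eqn:thm:SG3-Laplacian}.
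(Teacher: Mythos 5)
Your overall route is the same as the paper's: specialize \eqref{eqn:def-relationship-Laplacian-SG3-Gamma-m}, evaluate $\int_X h_x^{(m)}\,d\mu$ for the standard measure, and combine $r=\tfrac{7}{15}$ with the factor $6^m$. But there is a genuine error in exactly the step you flagged as the main obstacle, and it changes the constant. First, the central vertex $p_0$ lies in \emph{three} $1$-cells (it has six graph neighbours), not six cells; a generic point of $V_1\setminus V_0$ lies in two cells (four neighbours). Second, each $m$-cell containing $x$ contributes the same amount $\tfrac13\cdot 6^{-m}$ to the integral (the cell has measure $6^{-m}$, and the harmonic function with boundary values $1,0,0$ has mean $\tfrac13$ by symmetry). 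Hence, if $\deg(x)$ denotes the number of $m$-cells containing $x$, as you stipulate, then $\int_X h_x^{(m)}\,d\mu=\deg(x)/(3\cdot 6^m)$, i.e.\ your constant is $c=\tfrac13$, not $\tfrac16$; pushing your own cancellation through would give the prefactor $c^{-1}=3$ rather than the $6$ appearing in \eqref{eqn:thm:SG3-Laplacian}. The assertion that ``$c=\tfrac16$ follows from $\mu(X)=1$ and self-similarity'' is false under your convention, so as written the argument proves the theorem with the wrong constant.

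The fix, and what the paper actually computes, is that the $\deg(x)$ used in \eqref{eqn:def-Laplacian-Gamma-m} is in practice the graph degree (this is the $\tfrac14$ and $\tfrac16$ in the eigenvalue equations): $\deg=4$ at a generic junction and $\deg=6$ at $p_0$, i.e.\ twice the number of cells. With $\int_X h_{x}^{(m)}\,d\mu = \tfrac{2}{3\cdot6^m}$ in the degree-$4$ case and $\tfrac{1}{6^m}$ in the degree-$6$ case, one gets $\bigl(\int_X h_x^{(m)}\,d\mu\bigr)^{-1}\deg(x)=6^{m+1}$ uniformly in $x$, and this is precisely where the leading $6$ and the ratio $\tfrac{90}{7}=6\cdot\tfrac{15}{7}$ come from. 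So your plan is salvageable, but only after correcting the degree convention (and the ``six cells at $p_0$'' slip); as it stands the bookkeeping is off by the factor $2$ that distinguishes cell count from neighbour count.
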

\begin{proof}
  Following \cite[\S2.2]{Str06}, it is easy to compute
  \linenopar
  \begin{align*}
    \int h_{x_m}^{(m)}\,d\gm = \left\{\begin{array}{cl} \frac{2}{3\cdot 6^m} & \textit{if \;} deg(x_m)=4, \\
    \frac{1}{6^m} & \textit{if \;} deg(x_m)=6 \end{array}\right.
  \end{align*}
  since \gm is the standard (self-similar) measure on $S\negsp[2]G_3$.
  Thus, by (\ref{eqn:def-relationship-Laplacian-SG3-Gamma-m}),
  \linenopar
  \begin{equation}\label{eqn:thm:SG3-Laplacian}
    \gD_\gm u(x) = \lim_{m\to \iy} \left(\frac{15}7\right)^m \cdot 6^{m+1}\gD_m u(x)
           =6 \lim_{m \to \iy} \left(\frac{90}7\right)^m \gD_m u(x_m),
  \end{equation}
\end{proof}

Throughout, whenever there is discussion of an eigenvalue \gl, we
assume that we have been given the sequence $\{\gl_m\}_{m=0}^\iy$
which defines \gl via the decimation formula. Thus by
Theorem~\ref{thm:SG3-Laplacian},
\linenopar
\begin{equation}\label{eqn:eiv-seq-formula-for-SG3}
  \gl = 6\lim_{m \to \iy} \left(\frac{90}{7}\right)^m \gl_m' =\frac32 \lim_{m \to \iy} \left(\frac{90}{7}\right)^m \gl_m.
\end{equation}

\subsection{Computation of the normal derivatives}
\label{sec:Computation-of-the-normal-derivatives}

\begin{figure}%[b]
  \centering
  \scalebox{0.80}{\includegraphics{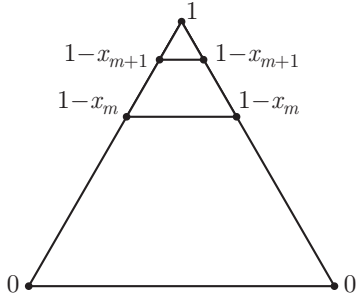}}
  \caption{The values of the eigenfunction $u$ on $S\negsp[2]G_3$,
  where $u|_{V_0} = [1,0,0]^T$. This figure shows a closeup of $u$
  near the point where it takes the value 1. By symmetry,
  we define $x_m := 1-u(F_0^mq_1)=1-u(F_0^mq_2)$.}
  \label{fig:SG3-100-side-seq}
\end{figure}

\begin{theorem}\label{thm:normal-derivs}
  Let $-\gD u = \gl u$ on $S\negsp[2]G_3$, where $u$ is defined on $V_0$
  by $u(q_0) = 1$, $u(q_1) = 0$, and $u(q_2) = 0$. Define
  \linenopar
  \begin{equation}\label{eqn:def-tau}
    \gt(\gl) := \frac{2\gl}{3\gl_0} \prod_{j=1}^\iy
    \frac{(1-\frac{\gl_j}4)(1-\frac{\gl_j}6)}{1-\frac32\gl_j+\frac{\gl_j^2}4}.
  \end{equation}
  Then the normal derivatives of $u$ are
  \linenopar
  \begin{subequations}\label{eqn:norm-derivs}
  \begin{align}
    \dn u(q_0) &= \frac{4-\gl_0}2 \gt(\gl), \q\text{and}
      \label{eqn:norm-derivs-p0}\\
    \dn u(q_1) &= \dn u(q_2) = - \gt(\gl).
      \label{eqn:norm-derivs-p1}
  \end{align}
  \end{subequations}
\end{theorem}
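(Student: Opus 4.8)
The plan is to imitate the computation of \S\ref{sec:example-sg}: evaluate $u$ on the shrinking edges $F_0^m(V_0)$ by iterating the eigenfunction extension matrix, replace $u$ there by the piecewise harmonic function that agrees with it (whose normal derivative needs no limit, since all terms of the defining sequence coincide), and let $m\to\iy$. Concretely, spectral decimation gives $u\evald{F_0^m V_0}=A_0(\gl_m)\cdots A_0(\gl_1)\,u\evald{V_0}$, and since the only $m$-level neighbours of the corner $q_0$ are $F_0^m q_1$ and $F_0^m q_2$, formula \eqref{eqn:normal-deriv-formula-for-SG3} reads $\dn u(q_0)=\lim_m (15/7)^m\,(2,-1,-1)\cdot u\evald{F_0^m V_0}$. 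Because $(2,-1,-1)$ is a left eigenvector of the harmonic extension matrix $A_0=A_0(0)$ with eigenvalue $r=\tfrac7{15}$, one has $(15/7)^m(2,-1,-1)=(2,-1,-1)A_0^{-m}$, and therefore
\begin{equation*}
  \dn u(q_0)=\lim_{m\to\iy}\,(2,-1,-1)\,A_0^{-m}A_0(\gl_m)\cdots A_0(\gl_1)\,u\evald{V_0}.
\end{equation*}
By the rotation and reflection symmetries of $S\negsp[2]G_3$, the normal derivative of $u$ at $q_1$ (hence at $q_2$) equals the normal derivative at $q_0$ of the eigenfunction with boundary data $(0,1,0)$, so it suffices to compute the single $3\times3$ limit $L:=\lim_m A_0^{-m}A_0(\gl_m)\cdots A_0(\gl_1)$ and pair $(2,-1,-1)L$ with $(1,0,0)^T$ and with $(0,1,0)^T$.

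To compute $L$ I would diagonalize. For every $\gl$ the vectors $(0,1,1)^T$ and $(0,1,-1)^T$ are eigenvectors of $A_0(\gl)$ with eigenvalues $\gb(\gl)+\gg(\gl)$ and $\gb(\gl)-\gg(\gl)$, while $A_0(\gl)(1,1,1)^T=(1,1,1)^T+\bigl(\ga(\gl)+\gb(\gl)+\gg(\gl)-1\bigr)(0,1,1)^T$. Since $(2,-1,-1)$ annihilates $(0,1,-1)^T$, the antisymmetric direction is irrelevant, and on the symmetric subspace spanned by $(1,1,1)^T$ and $(0,1,1)^T$ both $A_0^{-m}$ (diagonal, with eigenvalues $1$ and $(15/7)^m$) and each $A_0(\gl_j)$ are lower triangular, so the product telescopes to
\begin{gather*}
  L\big|_{\mathrm{sym}}=\begin{pmatrix}1&0\\ \Sigma&\Pi\end{pmatrix},\qquad
  \Pi=\prod_{j=1}^{\iy}\frac{15\bigl(\gb(\gl_j)+\gg(\gl_j)\bigr)}{7},\\
  \Sigma=\Pi\sum_{k=1}^{\iy}\frac{\ga(\gl_k)+\gb(\gl_k)+\gg(\gl_k)-1}{\prod_{j=1}^{k}\bigl(\gb(\gl_j)+\gg(\gl_j)\bigr)}.
\end{gather*}
Pairing with $(2,-1,-1)$ gives $\dn u(q_1)=\dn u(q_2)=-\Pi$ and $\dn u(q_0)=2(\Pi-\Sigma)$. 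So everything comes down to showing $\Pi=\gt(\gl)$ and $\Pi-\Sigma=\tfrac{4-\gl_0}{4}\,\gt(\gl)$.

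This identification is the heart of the proof. From \eqref{eqn:def-alpha-beta-gamma-phi} one has the closed forms
\begin{equation*}
  \gb(\gl)+\gg(\gl)=\frac{(6-\gl)(14-3\gl)}{3(5-\gl)(3-\gl)(4-6\gl+\gl^2)},\qquad
  \ga(\gl)+\gb(\gl)+\gg(\gl)-1=\frac{\gl(5-\gl)}{4-6\gl+\gl^2},
\end{equation*}
and one invokes the $S\negsp[2]G_3$ spectral-decimation relation (as worked out in the references of \S\ref{sec:example-sg3}), which can be put in the form $\gl_{m-1}(14-3\gl_m)=3\gl_m(5-\gl_m)(3-\gl_m)(4-\gl_m)$. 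Substituting it, the $j$-th factor of $\Pi$ becomes $\tfrac{90}{7}\cdot\tfrac{\gl_j}{\gl_{j-1}}\cdot\tfrac{(4-\gl_j)(6-\gl_j)}{6(4-6\gl_j+\gl_j^2)}$, so $\Pi=\bigl(\lim_m(90/7)^m\tfrac{\gl_m}{\gl_0}\bigr)\prod_{j\geq1}\tfrac{(4-\gl_j)(6-\gl_j)}{6(4-6\gl_j+\gl_j^2)}$, and \eqref{eqn:eiv-seq-formula-for-SG3} (namely $\lim_m(90/7)^m\gl_m=\tfrac23\gl$) collapses this to exactly the $\gt(\gl)$ of \eqref{eqn:def-tau}. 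The same substitution, combined with the elementary identity $(4-6\gl+\gl^2)-(4-\gl)(6-\gl)=-4(5-\gl)$, shows that the $k$-th term of the series defining $\Sigma/\Pi$ equals $\tfrac{\gl_0}{4}(Q_{k-1}-Q_k)$ with $Q_k:=\prod_{j=1}^{k}\tfrac{4-6\gl_j+\gl_j^2}{(4-\gl_j)(6-\gl_j)}$; the series telescopes, $Q_m\to0$ because its factors tend to $\tfrac16$, so $\Sigma/\Pi=\tfrac{\gl_0}{4}$ and $\Pi-\Sigma=\Pi\bigl(1-\tfrac{\gl_0}{4}\bigr)=\tfrac{4-\gl_0}{4}\,\gt(\gl)$, which is \eqref{eqn:norm-derivs}. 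All products and series converge absolutely and limits exchange with sums because $\gl_m\to0$ geometrically by \eqref{eqn:eiv-seq-formula-for-SG3}; the degenerate case $\gl=0$ (where $\gl_0=0$ too) is handled by the continuous extension of $\gt$ with $\gt(0)=1$, exactly as in \S\ref{sec:example-sg}.

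The principal obstacle is precisely this last step: one must write the $S\negsp[2]G_3$ decimation relation in the shape that makes $\Pi$ collapse to $\gt$, and spot the cancellation $(4-6\gl+\gl^2)-(4-\gl)(6-\gl)=-4(5-\gl)$ that turns the correction series into a telescoping sum. The reduction to the matrix limit $L$, the eigenvector decomposition, and the convergence bookkeeping all follow the $SG$ template of \S\ref{sec:example-sg} essentially verbatim.
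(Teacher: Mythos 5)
Your proof is correct --- I checked the eigenvector claims ($(2,-1,-1)A_0=\tfrac{7}{15}(2,-1,-1)$, the triangular action on the span of $(1,1,1)^T$ and $(0,1,1)^T$), the closed forms $\gb(\gl)+\gg(\gl)=(6-\gl)(14-3\gl)/\gf(\gl)$ and $\ga(\gl)+\gb(\gl)+\gg(\gl)-1=\gl(5-\gl)/(4-6\gl+\gl^2)$, the use of the decimation identity, the telescoping of the correction series, and the final limits, and everything matches \eqref{eqn:norm-derivs} --- but it is organized rather differently from the paper's proof. The paper never forms the matrix limit $\lim_m A_0^{-m}A_0(\gl_m)\cdots A_0(\gl_1)$ and never replaces $u$ by harmonic functions; it works directly with two scalar sequences, $x_m=1-u(F_0^mq_1)$ for the data $(1,0,0)$ and $s_m=u(F_0^mq_0)+u(F_0^mq_2)$ for the data $(0,1,0)$ (reduced to the matrix $A_0$ by dihedral symmetry, exactly as you do). For $s_m$ the recursion is already multiplicative, $s_{m+1}=\gd(\gl_{m+1})s_m$; for $x_m$ the paper removes the inhomogeneous term by the affine substitution $x_m\mapsto x_m-\gl_m/4$ (checking that the constant $g\equiv\tfrac14$ solves the relevant functional equation), and then feeds the resulting products into \eqref{eqn:normal-deriv-formula-for-SG3} using $\lim_m(90/7)^m\gl_m=\tfrac23\gl$ from \eqref{eqn:eiv-seq-formula-for-SG3}. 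The core ingredients are identical in both arguments: the decimation relation $\gl_m(14-3\gl_{m+1})=3\gl_{m+1}(5-\gl_{m+1})(4-\gl_{m+1})(3-\gl_{m+1})$, which turns $\gd(\gl_{m+1})$ into $\frac{(4-\gl_{m+1})(6-\gl_{m+1})\gl_{m+1}}{(4-6\gl_{m+1}+\gl_{m+1}^2)\gl_m}$, and the cancellation $(4-\gl)(6-\gl)-(4-6\gl+\gl^2)=4(5-\gl)$, which is the computation the paper hides in verifying $g\equiv\tfrac14$ and you hide in the telescoping sum $\sum_k(Q_{k-1}-Q_k)$. What your route buys is the $S\negsp[2]G_3$ analogue of the matrix limit of \cite{DRS07} quoted in \S\ref{sec:example-sg}, i.e.\ normal derivatives for arbitrary boundary data in one stroke; the cost is a bit more convergence bookkeeping (interchanging the $m$-limit with the series for your $\Sigma$, and implicitly assuming no $\gd(\gl_j)$ vanishes, i.e.\ $\gl_j\neq 6,\tfrac{14}{3}$ --- the same tacit genericity as in the paper). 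The paper's version is shorter and entirely elementary, needing only the two scalar recursions.
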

\begin{proof}
  To obtain \eqref{eqn:norm-derivs-p0} we
  need the values $u(F_0^mq_1)=u(F_0^mq_2)=1-x_m$ as depicted in Figure
  \ref{fig:SG3-100-side-seq}.

  Claim: $u(F_0^mq_1)=u(F_0^mq_2)=1-x_m$, where $x_0=1$ and
  \linenopar
  \begin{align}
    x_{m+1} - \frac{\gl_{m+1}}4 &= \frac{(4-\gl_{m+1})(6-\gl_{m+1})\gl_{m+1}}{(4-6\gl_{m+1}+\gl_{m+1}^2)\gl_m}
               \left(x_m - \frac{\gl_m}4\right).
    \label{eqn:thm:SG3-100-side-seq}
  \end{align}
  \begin{proof}[Proof of claim]
    By \eqref{eqn:general-eigf-extension-system}, if $b=c$, then
    $x=x'$. Then from $u(q_1)=u(q_2)=0$, we will have $u(F_0^m
    q_1)=u(F_0^m q_2)$ for all $m$, by induction. Define
    $x_m := 1-u(F_0^m q_1)$, $m \geq 0$. From $u(q_1)=0$, we have
    $x_0 = 1$. Now we show \eqref{eqn:thm:SG3-100-side-seq}
    holds.

    Denote $\gd(\gl) := \gb(\gl) + \gg(\gl)$, so that $\gd(\gl) =
    (14-3\gl)(6-\gl)/\gf(\gl)$, where $\gf(\gl)$ is as in
    \eqref{eqn:def-alpha-beta-gamma-phi}.
    Using \eqref{eqn:general-eigf-extension-system}, we have
    the matrix equation
    \linenopar
    \begin{align*}
      A_0(\gl) \left[\begin{array}{c} 1 \\ 1-x_m \\ 1-x_m \end{array}\right]
      = \left[\begin{array}{c} 1 \\ 1-x_{m+1} \\ 1-x_{m+1} \end{array}\right]
    \end{align*}
    gives
    \linenopar
    \begin{align*}
      x_{m+1}
      &= 1 - \ga(\gl_{m+1}) - \gd(\gl_{m+1}) + \gd(\gl_{m+1}) x_m \\
      &= - \frac{\gl_{m+1}(5-\gl_{m+1})}{4-6\gl_{m+1}+\gl_{m+1}^2}
         + \frac{(14-3\gl_{m+1})(6-\gl_{m+1})}{\gf(\gl_{m+1})} x_m.
    \end{align*}
    From the decimation relation \cite[(2.12)]{DrSt07}, we have the identity
    \linenopar
    \begin{align*}
      \frac{3(5-\gl_{m+1})(4-\gl_{m+1})(3-\gl_{m+1})\gl_{m+1}}{(14-3\gl_{m+1})\gl_{m}} = 1,
    \end{align*}
    so that
    \linenopar
    \begin{align*}
      \gd(\gl_{m+1}) &= \gd(\gl_{m+1}) \frac{3(5-\gl_{m+1})(4-\gl_{m+1})(3-\gl_{m+1})\gl_{m+1}}{(14-3\gl_{m+1})\gl_{m}} \\
        &= \frac{(4-\gl_{m+1})(6-\gl_{m+1})\gl_{m+1}}{(4-6\gl_{m+1}+\gl_{m+1}^2)\gl_m}.
    \end{align*}
    We would like to see $x_{m+1} - f(\gl_{m+1}) = \gd(\gl_{m+1})(x_m -
    f(\gl_m))$ for some function $f$, which is equivalent to
    \linenopar
    \begin{align*}
      (4-6\gl_{m+1}+\gl_{m+1}^2) \frac{f(\gl_{m+1})}{\gl_{m+1}}
      = \frac{f(\gl_m)}{\gl_m} (4-\gl_{m+1})(6-\gl_{m+1}) - (5-\gl_{m+1}).
    \end{align*}
    Let $f(x) = xg(x)$ and this can be rewritten
    \linenopar
    \begin{align*}
      (4-6\gl_{m+1}+\gl_{m+1}^2) g(\gl_{m+1})
      = g(\gl_m) (24-10\gl_{m+1}+\gl_{m+1}^2) - (5-\gl_{m+1}),
    \end{align*}
    which is easily seen to be true for the constant function
    $g(x) = \frac14$. Hence we may define $f(x) = \frac x4$, to
    obtain
    \linenopar
    \begin{align*}
      x_{m+1} - \frac{\gl_{m+1}}4 &= \gd(\gl_{m+1}) \left(x_{m} - \frac{\gl_{m}}4\right).
      \qedhere
    \end{align*}
  \end{proof}

  Now we compute $\dn u(q_0)$ using \eqref{eqn:thm:SG3-100-side-seq}
  to obtain
  \linenopar
  \begin{align*}
    x_m - \frac{\gl_m}4 &= \left(1-\frac{\gl_0}4\right) \frac{\gl_m}{\gl_0}
      \prod_{j=1}^m \frac{(4-\gl_j)(6-\gl_j)}{4-6\gl_j+\gl_j^2} \\
    x_m &= \frac{4-\gl_0}{4\gl_0} \left(\frac{\gl_0}{4-\gl_0} +
      \prod_{j=1}^m \frac{(4-\gl_j)(6-\gl_j)}{4-6\gl_j+\gl_j^2}\right) \gl_m.
  \end{align*}
  Since $u(q_0)=1$, we apply \eqref{eqn:normal-deriv-formula-for-SG3} to
  compute
  \linenopar
  \begin{align*}
    \dn u(q_0)
    &= \lim_{m \to \iy} \left(\frac{15}{7}\right)^m \left(\frac{6}{6}\right)^m
      (2u(q_0) - 2(1-x_m)) \\
    &= \frac{4-\gl_0}{2\gl_0} \lim_{m \to \iy} \left(\frac{90}{7}\right)^m \gl_m
      \left(\frac{\gl_0}{6^m(4-\gl_0)} +
      \prod_{j=1}^m \frac{(4-\gl_j)(6-\gl_j)}{6(4-6\gl_j+\gl_j^2)}\right) \\
    &= \frac{4-\gl_0}{2\gl_0} \left(\frac23 \gl\right)
      \left(0 + \prod_{j=1}^\iy \frac{(4-\gl_j)(6-\gl_j)}{6(4-6\gl_j+\gl_j^2)}\right),
  \end{align*}
  which is equivalent to the result.

  \pgap

  Now we compute the normal derivatives \eqref{eqn:norm-derivs-p0}. To obtain
  $\dn u(q_1) = \dn u(q_2)$, we don't actually need the
  values $u(F_1^mq_0)$ and $u(F_1^mq_2)$ as depicted in Figure
  \ref{fig:SG3-010-side-seq}.
  Instead, it suffices to only compute their sum, since by
  \eqref{eqn:normal-deriv-formula-for-SG3}, one has
  \linenopar
  \begin{align}\label{eqn:normal-deriv-at-0pt-for-SG3}
    \dn u(q_1) = \dn u(q_2)
      %&= \lim_{m \to \iy} \left(\frac{15}7\right)^m \sum_{p \nbr[m] y} (u(p)-u(y)) \\
      &= - \lim_{m \to \iy} \left(\frac{15}7\right)^m \left(u(F_1^m q_0)+u(F_1^m q_2)\right).
  \end{align}
  To exploit this symmetry accordingly, define
  \linenopar
  \begin{align*}
    y_m := u(F_0^mq_1), \q
    z_m := u(F_0^mq_2), \q\text{and}\q
    s_m := y_m + z_m.
  \end{align*}

  Claim: the sequence $\{s_m\}_{m=0}^\iy$ is given recurrently by $s_0=1$ and
  \linenopar
  \begin{align}\label{eqn:recursion-for-sm}
    s_{m+1} &=
    \frac{(14-3\gl_{m+1})(6-\gl_{m+1})}{\gf(\gl_{m+1})} s_m.
  \end{align}
  \begin{proof}[Proof of claim]
    As indicated in Figure \ref{fig:SG3-010-side-seq}, dihedral
    symmetry allows us to continue using the same matrix $A_0(\gl)$
    for computations, as long as we use $[0,1,0]^T$ for the new
    boundary data.

    It is clear that $s_0 = 1 + 0$ from the values on $V_0$. Then
    using the notation $\gd(\gl) = \ga(\gl) + \gb(\gl)$
    as above, the matrix equation
    \linenopar
    \begin{align*}
      A_0(\gl) \left[\begin{array}{c} 0 \\ y_m \\ z_m \end{array}\right]
      = \left[\begin{array}{c} 0 \\ \gb(\gl_{m+1})y_m + \gg(\gl_{m+1})z_m \\ \gg(\gl_{m+1})y_m + \gb(\gl_{m+1})z_m \end{array}\right]
      = \left[\begin{array}{c} 0 \\ y_{m+1} \\ z_{m+1} \end{array}\right]
    \end{align*}
    gives $s_{m+1} = y_{m+1} + z_{m+1} = \gd(\gl_{m+1})s_m$ immediately.
  \end{proof}
  Since \eqref{eqn:recursion-for-sm} gives
  \linenopar
  \begin{align*}
    s_m = \prod_{j=1}^m \gd(\gl_j) s_0
    = \frac{\gl_m}{\gl_0} \prod_{j=1}^m \frac{(4-\gl_j)(6-\gl_j)}{4-6\gl_j+\gl_j^2},
  \end{align*}
  and $u(q_1)=0$, the normal derivative is
  \linenopar
  \begin{align*}
    \dn u(q_1)
    &= \lim_{m \to \iy} \left(\frac{15}{7}\right)^m \left(\frac{6}{6}\right)^m
      (2u(q_1) - s_m) \\
    &= -\frac{1}{\gl_0} \lim_{m \to \iy} \left(\frac{90}{7}\right)^m\gl_m
      \prod_{j=1}^m \frac{(4-\gl_j)(6-\gl_j)}{6(4-6\gl_j+\gl_j^2)} \\
    &= -\frac{1}{\gl_0} \left(\frac23 \gl\right)
      \prod_{j=1}^\iy \frac{(4-\gl_j)(6-\gl_j)}{6(4-6\gl_j+\gl_j^2)}.
    \qedhere
  \end{align*}
\end{proof}

\begin{figure}%[b]
  \centering
  \scalebox{0.80}{\includegraphics{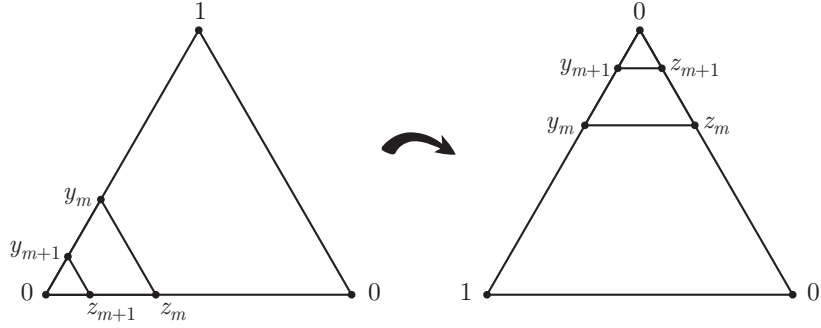}}
  \caption{The values of the eigenfunction $u$ on $S\negsp[2]G_3$,
  where $u|_{V_0} = [0,1,0]^T$. This figure shows a closeup of $u$
  near a point where it takes the value 0. See \eqref{eqn:normal-deriv-at-0pt-for-SG3} and the ensuing discussion.}
  \label{fig:SG3-010-side-seq}
\end{figure}

\subsection{The resolvent prekernel}

As in \eqref{eqn:Bobmatrix-preview}, let $B\parlaq{pq} := \sum_{K_j \ni q} \dn[K_j] \gy_\gl^{(p)}(q)$ for $p \in V_1 \setminus V_0$.

\begin{cor}\label{thm:Bpp}
  With $\gt(\gl)$ as in Thm. \ref{thm:normal-derivs} and $r = \frac{7}{15}$,
  \linenopar
  \begin{align*}
    \q B\parlaq{pq} = - r^{-1} \gt(\gl), \; \text{for } p \nbr[1] q
    \q \text{ and } \q
    B\parlaq{pp} &=
    \begin{cases}
      \tfrac32 r^{-1} (4-\gl_0) \gt(\gl), & p = p_0 \\
      r^{-1} (4-\gl_0) \gt(\gl), & p \neq p_0.
    \end{cases}
  \end{align*}
\end{cor}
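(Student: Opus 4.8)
The plan is to unwind the definition $B\parlaq{pq} = \sum_{K_j \ni q} \dn[K_j] \gy\parlaq{p}(q)$ on $S\negsp[2]G_3$: I would rewrite each $1$-cell normal derivative of $\gy\parlaq{p}$ as a rescaled normal derivative of one of the fundamental eigenfunctions $\gh$ by way of Corollary~\ref{thm:relation-of-fundeif-to-psi}, and then substitute the explicit values of those normal derivatives furnished by Theorem~\ref{thm:normal-derivs}.

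In more detail, first I would note that the $1$-cells of $S\negsp[2]G_3$ meet only at single points, so that if $p \nbr[1] q$ with $p \neq q$ there is exactly one $1$-cell $K_j$ containing both, whereas if $p = q$ the number of $1$-cells through $p$ is the degree count noted just before~\eqref{eqn:def-Laplacian-Gamma-m}, namely three when $p = p_0$ and two otherwise. By Corollary~\ref{thm:relation-of-fundeif-to-psi}, $\gy\parlaq{p}$ vanishes identically on every $1$-cell not containing $p$, so only the cells through both $p$ and $q$ contribute; and on such a cell $K_j = F_j(X)$ one has $\gy\parlaq{p} = \gh\parlaq[r\gm_j\gl]{F_j^{-1}p} \comp F_j^{-1}$ there, with $r = \tfrac7{15}$ and $\gm_j = \tfrac16$, so $r\gm_j = \tfrac7{90}$. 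The scaling rule~\eqref{eqn:def:normal-derivative} for normal derivatives then gives
\[
  \dn[K_j] \gy\parlaq{p}(q) = r^{-1}\, \dn \gh\parlaq[\frac7{90}\gl]{F_j^{-1}p}\bigl(F_j^{-1}q\bigr),
\]
and by the dihedral symmetry of $S\negsp[2]G_3$ — which permutes $q_0,q_1,q_2$, with $\gh$ transforming equivariantly — Theorem~\ref{thm:normal-derivs} applies with $F_j^{-1}p$ playing the role of $q_0$: the right-hand side equals $r^{-1}\tfrac{4-\gl_0}2\gt(\gl)$ when $q = p$ (so $F_j^{-1}q$ is the vertex carrying the value $1$) and equals $-r^{-1}\gt(\gl)$ when $q\neq p$ (so $F_j^{-1}q$ carries the value $0$). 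Summing over the cells through $p$ and $q$ then yields the three cases: one cell for $p\nbr[1]q$ gives $-r^{-1}\gt(\gl)$; two cells for $p = q \neq p_0$ give $r^{-1}(4-\gl_0)\gt(\gl)$; and three cells for $p = q = p_0$ give $\tfrac32 r^{-1}(4-\gl_0)\gt(\gl)$.

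I expect the only delicate point to be bookkeeping rather than a real obstacle. On one hand, Theorem~\ref{thm:normal-derivs} is stated only for the boundary data $(1,0,0)$, so one must genuinely invoke the symmetry of $S\negsp[2]G_3$ to treat an arbitrary boundary vertex $F_j^{-1}p$. On the other hand, the eigenfunction being differentiated lives on a $1$-cell, i.e.\ at parameter $\tfrac7{90}\gl$, whose decimation sequence is the one-step shift of the sequence attached to $\gl$; accordingly the symbols $\gl_0$ and $\gt(\gl)$ in the conclusion are to be read as those of this $1$-cell problem. Assumption~\ref{hyp:lambda-not-a-Dirichlet-eiv} ensures that none of the forbidden values $3,5,3\pm\sqrt5$ occurs along the relevant sequence, so $\gt$ is well-defined and nonzero and the computation is legitimate. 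Once these conventions are fixed, everything reduces to direct substitution.
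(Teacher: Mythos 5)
Your argument is correct and is essentially the paper's own proof: unwind the definition of $B\parlaq{pq}$, use that $\gy\parlaq{p}$ vanishes off the $1$-cells containing $p$, pick up the factor $r^{-1}$ from computing normal derivatives on $1$-cells, insert the values from Theorem~\ref{thm:normal-derivs} (extended to an arbitrary boundary vertex by symmetry), and count cells — one through a pair $p \nbr[1] q$, two through $p=q\neq p_0$, three through $p=q=p_0$. Your explicit bookkeeping that the cell-level eigenvalue is $\tfrac{7}{90}\gl$, so that $\gl_0$ and $\gt(\gl)$ are read via the (shifted) decimation sequence of the cell problem, is a point the paper leaves implicit, but the route is the same.
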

\begin{proof}
  We are now working on $V_1$, so each term has a leading factor
  of $r^{-1}$. Whenever $p \nbr[1] q$, there is just one term $\dn u(q)
  = - \gt(\gl)$ in the sum; the other corner of the triangle is
  ignored and everything outside this 1-cell is 0. When $p=q$,
  then there is a sum of terms $\dn u(p) = \frac{4-\gl_0}2
  \gt(\gl)$. At the center point $p_0$, there are three such
  terms; at every other point there are only two.
\end{proof}

The matrix $B\parlaq{pq}$ is
\begin{equation}\label{eqn:inverse-resolvent-explicitly}
  \tfrac{15}{7}\gt(\gl)
  \left[\begin{array}{ccccccc}
    \scalebox{0.80}{\ensuremath{\tfrac32(4-\gl_0)}} & -1 & -1 & -1 & -1 & -1 & -1 \\
    -1 & \scalebox{0.80}{\ensuremath{(4-\gl_0)}} & -1 &  0 &  0 &  0 & -1 \\
    -1 & -1 & \scalebox{0.80}{\ensuremath{(4-\gl_0)}} & -1 &  0 &  0 &  0 \\
    -1 &  0 & -1 & \scalebox{0.80}{\ensuremath{(4-\gl_0)}} & -1 &  0 &  0 \\
    -1 &  0 &  0 & -1 & \scalebox{0.80}{\ensuremath{(4-\gl_0)}} & -1 &  0 \\
    -1 &  0 &  0 &  0 & -1 & \scalebox{0.80}{\ensuremath{(4-\gl_0)}} & -1 \\
    -1 & -1 &  0 &  0 &  0 & -1 & \scalebox{0.80}{\ensuremath{(4-\gl_0)}} \\
  \end{array}\right]
\end{equation}

\begin{defn}\label{def:resolvent prekernel}
  Define the \emph{resolvent prekernel} by $G\parlam := (B\parlam)^{-1}$.
\end{defn}

Our final result may be obtained by brutal and direct computation.

\begin{theorem}\label{thm:Gpq}
  The resolvent prekernel $G\parlam$ is given by
  \begin{equation}\label{eqn:thm:Gpq}
    \frac{14}{15(6-\gl)\gt(\gl)\gf(\gl)}
    \left[\begin{array}{ccccccc}
      (2-\gl)\gk_1  & \gk_1 & \gk_1 & \gk_1 & \gk_1 & \gk_1 & \gk_1 \\
      \gk_1 & \gk_2 & \gk_3 & \gk_4 & \gk_5 & \gk_4 & \gk_3 \\
      \gk_1 & \gk_3 & \gk_2 & \gk_3 & \gk_4 & \gk_5 & \gk_4 \\
      \gk_1 & \gk_4 & \gk_3 & \gk_2 & \gk_3 & \gk_4 & \gk_5 \\
      \gk_1 & \gk_5 & \gk_4 & \gk_3 & \gk_2 & \gk_3 & \gk_4 \\
      \gk_1 & \gk_4 & \gk_5 & \gk_4 & \gk_3 & \gk_2 & \gk_3 \\
      \gk_1 & \gk_3 & \gk_4 & \gk_5 & \gk_4 & \gk_3 & \gk_2
    \end{array}\right],
  \end{equation}
  where
  \begin{align*}
    \gk_1 &= (3-\gl)(5-\gl)(6-\gl), \\
    \gk_2 &= 201 - 300\gl + \tfrac{269}{2}\gl^2 - 24\gl^3 + \tfrac32\gl^4, \\
    \gk_3 &= 87 - 75\gl + 19\gl^2 - \tfrac32\gl^3, \\
    \gk_4 &= 57 - 24\gl + \tfrac52\gl^2, \text{ and}\\
    \gk_5 &= 51 - 15\gl - \gl^2.
  \end{align*}
  In particular, $G\parlam$ is symmetric and invertible with
  determinant
  \begin{equation}\label{eqn:det-G}
    \det G\parlam =\left(\frac{7}{15}\right)^7
      \frac{6(4-6\gl+\gl^2)}{(6-\gl)\gf(\gl)^2 \gt(\gl)}.
  \end{equation}
\end{theorem}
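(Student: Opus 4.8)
\textbf{Proof strategy for Theorem~\ref{thm:Gpq}.} The starting point is the explicit matrix for $B\parlam$ recorded in \eqref{eqn:inverse-resolvent-explicitly}, which is justified by Corollary~\ref{thm:Bpp} together with the normal-derivative formulas of Theorem~\ref{thm:normal-derivs}. Since $G\parlam$ is defined to be $(B\parlam)^{-1}$, the entire theorem is a finite linear-algebra computation: invert this one $7\times 7$ matrix, read off symmetry, and take a determinant. The plan is therefore to exploit the structure of $B\parlam$ rather than to invert blindly. Factoring out the scalar $\tfrac{15}{7}\gt(\gl)$, the remaining matrix $M$ is a \emph{bordered circulant}: the central vertex $p_0$ is joined to each of $p_1,\dots,p_6$ (a border row and column of $-1$'s, with corner entry $\tfrac32(4-\gl_0)$), while the $6\times 6$ block on $p_1,\dots,p_6$ is exactly $C:=(4-\gl_0)I_6 - A_{C_6}$, where $A_{C_6}$ is the adjacency matrix of the hexagonal cycle $p_1\!-\!p_2\!-\!\dots\!-\!p_6\!-\!p_1$. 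This block structure mirrors the $D_6$ symmetry of $V_1$ inside $S\negsp[2]G_3$, and it is the $\mathbb Z/6$ part of that symmetry that does the work.

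First I would diagonalize $C$ by the discrete Fourier transform on $\mathbb Z/6$: its eigenvalues are $(4-\gl_0) - 2\cos(\pi j/3)$ for $j=0,\dots,5$, namely $2-\gl_0$, $3-\gl_0$, $5-\gl_0$, $6-\gl_0$, $5-\gl_0$, $3-\gl_0$. In particular $C$ is invertible under Assumption~\ref{hyp:lambda-not-a-Dirichlet-eiv} (the exceptional values $2,3,5,6$ are among the forbidden ones), the all-ones vector $\mathbf 1$ is the $j=0$ eigenvector with $C\mathbf 1 = (2-\gl_0)\mathbf 1$, so $C^{-1}\mathbf 1 = (2-\gl_0)^{-1}\mathbf 1$ and $\mathbf 1^\top C^{-1}\mathbf 1 = 6/(2-\gl_0)$. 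Moreover $C^{-1}$ is itself a symmetric circulant, whose four distinct entries $c_0,c_1,c_2,c_3$ (indexed by cyclic distance) are the inverse-DFT sums $c_d = \tfrac16\sum_{j} \cos(\pi j d/3)\big/\big((4-\gl_0)-2\cos(\pi j/3)\big)$, i.e.\ explicit partial fractions over $2-\gl_0,\,3-\gl_0,\,5-\gl_0,\,6-\gl_0$.

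Next I would invert the bordered matrix by the Schur-complement (block-inversion) formula. With $k := \tfrac32(4-\gl_0) - \mathbf 1^\top C^{-1}\mathbf 1 = \tfrac32(4-\gl_0) - \tfrac{6}{2-\gl_0} = \tfrac{3(\gl_0^2-6\gl_0+4)}{2(2-\gl_0)}$, one gets
\[
  M^{-1} = \begin{pmatrix} k^{-1} & k^{-1}(2-\gl_0)^{-1}\mathbf 1^\top \\[2pt] k^{-1}(2-\gl_0)^{-1}\mathbf 1 & C^{-1} + k^{-1}(2-\gl_0)^{-2}\,\mathbf 1\mathbf 1^\top \end{pmatrix}.
\]
Putting the entries of $C^{-1}$ and the rank-one correction over the common denominator $(2-\gl_0)(3-\gl_0)(5-\gl_0)(6-\gl_0)$, collecting numerators, and then restoring the scalar factor $\tfrac{7}{15\gt(\gl)}$ and using $\gf(\gl)=3(5-\gl)(3-\gl)(4-6\gl+\gl^2)$ produces the stated matrix; the degree-$\le 4$ polynomials $\gk_1,\dots,\gk_5$ are precisely these collected numerators (with $\gk_1 = (3-\gl)(5-\gl)(6-\gl)$ coming from the border/corner entries). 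Symmetry of $G\parlam$ is then manifest (and in any case is inherited from the evident symmetry of $B\parlam$), and the determinant is obtained from $\det B\parlam = \big(\tfrac{15}{7}\gt(\gl)\big)^{7}\det M$ with $\det M = k\cdot\det C = \tfrac32(\gl_0^2-6\gl_0+4)(3-\gl_0)^2(5-\gl_0)^2(6-\gl_0)$, which rearranges into the displayed expression \eqref{eqn:det-G}.

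The conceptual content here is very light, so the genuine obstacle is purely organizational: carrying out the partial-fraction bookkeeping for $C^{-1}$ and clearing denominators without arithmetic slips, and keeping straight the two roles played by the eigenvalue $\gl$ versus the decimation seed $\gl_0$ (they enter $\gt$ and the normal derivatives differently). The most robust way to finish—and to sidestep several pages of polynomial algebra—is to \emph{verify} the claimed $G\parlam$ rather than rederive it: check $B\parlam\,G\parlam = I$ directly, which in the Fourier basis for the $\mathbb Z/6$ block reduces to a small number of scalar identities among the $\gk_i$, $\gf$, and the factors $(2-\gl),\dots,(6-\gl)$; a final sanity check is the harmonic limit $\gl\to 0$, where $\gt(0)=1$ and $G\parlam$ must reduce to the known Green's-function matrix, exactly as in the $S\negsp[2]G$ example.
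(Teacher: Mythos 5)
Your route is, at bottom, the same as the paper's: the paper's entire proof is the remark that the result ``may be obtained by brutal and direct computation'' of $(B^{(\lambda)})^{-1}$ from \eqref{eqn:inverse-resolvent-explicitly}, and your bordered-circulant/Schur-complement organization is a correct and cleaner way to carry out exactly that computation. The ingredients you cite check out: the $6\times6$ block is $C=(4-\lambda_0)I_6-A_{C_6}$ with spectrum $2-\lambda_0,\,3-\lambda_0,\,3-\lambda_0,\,5-\lambda_0,\,5-\lambda_0,\,6-\lambda_0$, one has $C\mathbf 1=(2-\lambda_0)\mathbf 1$, the Schur complement is $k=3(\lambda_0^2-6\lambda_0+4)/\bigl(2(2-\lambda_0)\bigr)$, the block-inversion formula is right, and $\det M=k\det C=\tfrac32(\lambda_0^2-6\lambda_0+4)(3-\lambda_0)^2(5-\lambda_0)^2(6-\lambda_0)$, whose reciprocal is precisely $6(4-6\lambda+\lambda^2)/\bigl((6-\lambda)\phi(\lambda)^2\bigr)$ (with the theorem's $\lambda$ standing for $\lambda_0$, a conflation you rightly flag).

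Two of your concluding claims of exact agreement with the printed statement, however, do not survive the computation you propose, and your own suggested verification $B^{(\lambda)}G^{(\lambda)}=I$ is what exposes them. The partial fractions give, for the opposite-vertex (cyclic distance $3$) entry, $c_3=2/\bigl((2-\lambda)(3-\lambda)(5-\lambda)(6-\lambda)\bigr)$, and adding the rank-one correction yields $\kappa_5=51-15\lambda+\lambda^2$, not $51-15\lambda-\lambda^2$ as in \eqref{eqn:thm:Gpq}; e.g.\ at $\lambda_0=1$ the printed matrix fails the $(p_1,p_4)$ entry of $B^{(\lambda)}G^{(\lambda)}=I$ by $1/10$, while the same method reproduces $\kappa_1,\dots,\kappa_4$ exactly. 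Likewise, since $G^{(\lambda)}=\tfrac{7}{15\tau(\lambda)}M^{-1}$ is $7\times7$ and $M$ is independent of $\tau$, the determinant necessarily carries $\tau(\lambda)^{-7}$, so your $\det B^{(\lambda)}=\bigl(\tfrac{15}{7}\tau(\lambda)\bigr)^{7}\det M$ rearranges to \eqref{eqn:det-G} only after replacing $\tau(\lambda)$ there by $\tau(\lambda)^{7}$; you should state these as corrections rather than assert agreement. One smaller inaccuracy: the invertibility of $C$ is not covered by Assumption~\ref{hyp:lambda-not-a-Dirichlet-eiv} as you claim --- the forbidden decimation values are $3,5,3\pm\sqrt5$, so $\lambda_0=2$ (where $C$ is singular but $M$ is not, as the determinant shows) and $\lambda_0=6$ are not excluded that way; the $\lambda_0=2$ case is harmless but needs a word (pivot on the corner entry instead, or argue by continuity of the rational formula).
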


\bibliographystyle{alpha}
\bibliography{resolvent-kernel}
\nocite{*} %include all references, even if not explicitly cited

\end{document}